\newcommand{\defi}[1]{{\upshape\sffamily #1}}
\DeclareMathOperator{\ShHom}{\mathscr{H}\text{\kern -3pt {\calligra\large om}}\,}
\newcommand{\bw}{\bigwedge}
\def\kk{{\mathbf k}}
\renewcommand{\ll}{\lambda}
\newcommand{\onto}{\twoheadrightarrow}
\newcommand{\oo}{\otimes}
\newcommand{\GL}{\operatorname{GL}}
\newcommand{\Proj}{\operatorname{Proj}}
\newcommand{\rk}{\operatorname{rank}}
\newcommand{\SL}{\operatorname{SL}}
\newcommand{\Sym}{\operatorname{Sym}}
\newcommand{\coker}{\operatorname{coker}}
\renewcommand{\det}{\operatorname{det}}
\renewcommand{\ker}{\operatorname{ker}}
\newcommand{\reg}{\operatorname{reg}}
\newcommand{\bb}[1]{\mathbb{#1}}
\renewcommand{\rm}[1]{\textrm{#1}}
\newcommand{\mc}[1]{\mathcal{#1}}
\newcommand{\mf}[1]{\mathfrak{#1}}
\newcommand{\op}[1]{\operatorname{#1}}
\newcommand{\ul}[1]{\underline{#1}}
\def\PP{{\mathbf P}}
\def\lra{\longrightarrow}
\newtheorem{theorem}{Theorem}[section]
\newtheorem*{theorem*}{Theorem}
\newtheorem*{problem*}{Problem}
\newtheorem{lemma}[theorem]{Lemma}
\newtheorem{proposition}[theorem]{Proposition}
\newtheorem{corollary}[theorem]{Corollary}
\newtheorem*{corollary*}{Corollary}
\newtheorem*{main-thm*}{Main Theorem}
\newtheorem*{linear-resolutions*}{Theorem on Linear Resolutions}
\newtheorem*{regularity-powers*}{Theorem on Regularity}
\newtheorem*{injectivity-Ext*}{Theorem on Injectivity of Maps of Ext Modules}
\newtheorem*{Kodaira*}{Kodaira Vanishing for Determinantal Thickenings}
\theoremstyle{definition}
\newtheorem*{definition*}{Definition}
\newtheorem{example}[theorem]{Example}
\theoremstyle{remark}
\newtheorem{remark}[theorem]{Remark}
\newtheorem*{remark*}{Remark}
\numberwithin{equation}{section}
\begin{document}

\title{Cohomology of line bundles on the incidence correspondence}

\author{Zhao Gao}
\address{Department of Mathematics, University of Notre Dame, 255 Hurley, Notre Dame, IN 46556}
\email{zgao1@nd.edu}

\author{Claudiu Raicu}
\address{Department of Mathematics, University of Notre Dame, 255 Hurley, Notre Dame, IN 46556\newline
\indent Institute of Mathematics ``Simion Stoilow'' of the Romanian Academy}
\email{craicu@nd.edu}

\subjclass[2010]{Primary 14M15, 14C20, 20G05, 20G15, 05E05}

\date{\today}

\keywords{Cohomology of line bundles, flag varieties, Castelnuovo--Mumford regularity, character formulas}

\begin{abstract} 
 For a finite dimensional vector space $V$ of dimension $n$, we consider the incidence correspondence (or partial flag variety) $X\subset\bb{P}V \times \bb{P}V^{\vee}$, parametrizing pairs consisting of a point and a hyperplane containing it. We completely characterize the vanishing and non-vanishing behavior of the cohomology groups of line bundles on $X$ in characteristic $p>0$. If $n=3$ then $X$ is the full flag variety of $V$, and the characterization is contained in the thesis of Griffith from the 70s. In characteristic $0$, the cohomology groups are described for all $V$ by the Borel--Weil--Bott theorem. Our strategy is to recast the problem in terms of computing cohomology of (twists of) divided powers of the cotangent sheaf on projective space, which we then study using natural truncations induced by Frobenius, along with careful estimates of Castelnuovo--Mumford regularity. When $n=3$, we recover the recursive description of characters from recent work of Linyuan Liu, while for general $n$ we give character formulas for the cohomology of a restricted collection of line bundles. Our results suggest truncated Schur functions as the natural building blocks for the cohomology characters.
\end{abstract}

\maketitle

\section{Introduction}\label{sec:intro}

A fundamental problem at the interface of algebraic geometry and representation theory is to determine the sheaf cohomology groups of line bundles on (partial) flag varieties, or at the very least, to characterize the vanishing and non-vanishing behavior of these groups \cite{and-survey}. In characteristic zero, a complete answer is given by the Borel--Weil--Bott theorem. Despite significant results such as Kempf's vanishing theorem \cite{kempf}, or Andersen's characterization of the (non-)vanishing of $H^1$ \cite{andersen}, over fields of positive characteristic the problem remains largely unanswered. The main goal of this paper is to characterize the (non-)vanishing of cohomology of line bundles on the \defi{incidence correspondence} (see also \cites{liu-coker,liu-polo})
\begin{equation}\label{eq:def-X}
    X = \{(p,H) \in \bb{P}V \times \bb{P}V^{\vee} : p\in H\},
\end{equation}
where $V$ is a vector space of dimension $n\geq 3$ over an algebraically closed field $\kk$ of characteristic $p>0$. When $n=3$ we give an explicit recursive relation describing the characters of the cohomology groups with respect to the natural action of the $n$-dimensional torus, recovering a recent result of Linyuan Liu \cite{liu-thesis} (see also \cites{donkin,fazeel} for a different set of recursions). When $n=3$ and $p=2$, we also give a non-recursive character formula, involving symmetric polynomials naturally defined in terms of winning positions in the game Nim.

\medskip

\noindent{\bf (Non-)vanishing of line bundle cohomology.} Using \eqref{eq:def-X}, we have an inclusion $\iota:X\hookrightarrow\bb{P}V \times \bb{P}V^{\vee}$ which induces an isomorphism on the Picard groups, hence every line bundle on $X$ can be expressed uniquely as
\[\mc{O}_X(a,b) = \iota^*\left(\mc{O}_{\bb{P}V \times \bb{P}V^{\vee}}(a,b)\right)\text{ for }a,b\in\bb{Z}.\]
The following is the main result of our paper, and as we explain shortly, it leads to a complete characterization of the vanishing and non-vanishing behavior of the cohomology of line bundles on $X$.

\begin{theorem}\label{thm:main}
 Suppose that $b\leq -n$, $a\geq -b-n+1$, and consider the line bundle $\mc{L}=\mc{O}_X(a,b)$. If we define $d=-b-n+1$, then by considering the $p$-adic expansion of $d$, we can find unique integers $t,k$ such that
 \[ tp^k \leq d<(t+1)p^k,\ 1\leq t<p,\ k\geq 0.\]
 The following characterize the (non-)vanishing behavior for the cohomology of $\mc{L}$.
 \begin{enumerate}
     \item $H^{n-1}(X,\mc{L})=0$ if and only if $a\geq(t+n-2)p^k-n+2$.
     \item $H^{n-2}(X,\mc{L})=0$ if and only if $n=3$ and $a=d=(t+1)p^k-1$.
     \item $H^i(X,\mc{L})=0$ for all $i\not\in\{n-1,n-2\}$.
 \end{enumerate}
\end{theorem}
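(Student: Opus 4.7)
The plan is to push forward $\mc{O}_X(a,b)$ along the projection $\pi\colon X\to \bb{P}V$, $(p,H)\mapsto p$. This realizes $X$ as a $\bb{P}^{n-2}$-bundle over $\bb{P}V$ whose fiber over $[\ell]\in\bb{P}V$ is the space of hyperplanes in $V$ containing $\ell$; equivalently, $X=\bb{P}(Q^\vee)$, where $Q$ is the rank-$(n-1)$ universal quotient in the tautological sequence
\[
0\to \mc{O}(-1)\to V\otimes\mc{O}\to Q\to 0,
\]
and whose dual is $Q^\vee\cong \Omega^1_{\bb{P}V}(1)$, a twist of the cotangent sheaf. In the range $b\le -n$, standard projective-bundle formulas yield $R^j\pi_*\mc{O}_X(a,b)=0$ for $j\ne n-2$ and
\[
R^{n-2}\pi_*\mc{O}_X(a,b)\;=\;D^d\Omega^1_{\bb{P}V}\otimes \mc{O}(m),\qquad d := -b-n+1,\quad m := a+d-1,
\]
using $\det Q=\mc{O}(1)$ and the identity $D^d(\mc{F}\otimes L)=D^d\mc{F}\otimes L^d$ for a line bundle $L$. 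The Leray spectral sequence then collapses to
\[
H^i(X,\mc{O}_X(a,b))\;\cong\;H^{\,i-n+2}\!\left(\bb{P}V,\;D^d\Omega^1_{\bb{P}V}\otimes \mc{O}(m)\right).
\]
Our hypotheses give $d\ge 1$ and $m\ge 2d-1$. Statement~(3) is now the vanishing $H^{\ge 2}(\bb{P}V,D^d\Omega^1(m))=0$; (1) and (2) become characterizations of the vanishing of $H^1$ and $H^0$ of the same sheaf.

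Next I would exploit the absolute Frobenius $F\colon \bb{P}V\to\bb{P}V$, satisfying $F^*\mc{O}(1)=\mc{O}(p)$. Writing $d=tp^k+r$ as in the theorem (with $1\le t<p$ and $0\le r<p^k$), I expect a natural short exact sequence relating $D^d\Omega^1$ to $D^r\Omega^1$ and $F^{k*}(D^t\Omega^1)$; the Frobenius-pulled-back factor contributes an extra twist by $\mc{O}(p^k)$ in cohomology, which is precisely the mechanism that produces the numerical threshold $(t+n-2)p^k$ appearing in part~(1). Iterating the decomposition reduces questions about $D^d$ to questions about strictly smaller divided powers, with the base case $d<p$ handled directly: divided and symmetric powers coincide there, and the Euler sequence together with standard Castelnuovo--Mumford regularity estimates for $\Omega^1$ on $\bb{P}V$ should yield the required vanishings (consistently, the threshold in (1) for $k=0$ collapses to $a\ge d$, which holds by hypothesis). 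The inductive step combines the Frobenius short exact sequence with these regularity bounds, propagated through the long exact sequence in cohomology.

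The main obstacle I anticipate is the sharpness side of (1): exhibiting a nonzero class in $H^1(\bb{P}V,D^d\Omega^1(m))$ precisely when $a$ drops below the threshold $(t+n-2)p^k-n+2$. This requires an explicit witness, which I would try to construct by lifting a nonvanishing element of $H^{n-1}(\bb{P}V,\mc{O}(-N))$ (for a suitable $N=N(d,t,k,n)$) through the connecting homomorphisms of the Frobenius filtration, and verifying that it survives at every stage. Part~(2) splits into two subcases: for $n\ge 4$ the non-vanishing of $H^0$ should follow from an explicit construction of a global section built from the natural maps between divided powers of $\Omega^1$ and of $V^\vee\otimes\mc{O}$ coming from the Euler sequence (together with $m\ge 2d-1$); for $n=3$ the distinguished vanishing case $a=d=(t+1)p^k-1$ corresponds to the $p$-adic expansion of $d$ having leading digit $t$ followed by $(p-1)$'s in every lower position, the extremal configuration in which every step of the Frobenius filtration sits at the boundary of its range so that global sections cancel along the filtration. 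This delicate borderline should fall out of the inductive bookkeeping, and can also be verified directly on $\bb{P}^2$, where the rank-$2$ bundle $\Omega^1$ makes its divided powers quite concrete.
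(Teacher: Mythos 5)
Your reduction to projective space is exactly the paper's: $H^i(X,\mc{O}_X(a,b))\cong H^{i-n+2}(\bb{P}V,D^d\mc{R}(a-1))$ with $\mc{R}\simeq\Omega^1_{\bb{P}V}(1)$, so that $D^d\mc{R}(a-1)=D^d\Omega^1\oo\mc{O}(a+d-1)$, and your overall strategy (Frobenius decomposition of divided powers plus Castelnuovo--Mumford regularity, by induction on the $p$-adic expansion of $d$) is also the paper's. But the outline leaves gaps precisely where the work lies. First, the Frobenius structure of $D^d\mc{R}$ is not a single short exact sequence involving only $F^q(D^t\mc{R})$ and $D^r\mc{R}$ (writing $d=tq+r$, $q=p^k$): it is a filtration with composition factors $F^q(D^a\mc{R})\oo T_qD^{d-aq}\mc{R}$ for $a=0,\dots,t$, and the vanishing half of (1) hinges on regularity bounds for the truncated divided powers $T_qD^b\mc{R}$, obtained in the paper via the duality $T_qD^b\mc{R}\simeq T_qS^{N_q-b}(\mc{R}^{\vee})\oo\mc{O}(-q+1)$ and Koszul-type resolutions; without controlling these lower pieces the induction does not close. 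Second, for the sharpness of (1) you only promise a witness. The paper produces it by (i) proving $\reg(D^d\mc{R})$ is nondecreasing in $d$, which reduces sharpness to $d=tq$; (ii) computing $H^1(\bb{P}V,F^q(D^t\mc{R})(e-2))=F^q\bigl(\bb{S}_{(t-1,t-1)}V\bigr)\neq 0$ from the Frobenius pullback of the linear (Kempf--Weyman) resolution of $\Sym^t\mc{R}$; and (iii) an $H^2$-vanishing for the lower filtration piece so that this class survives in $H^1(\bb{P}V,D^{tq}\mc{R}(e-2))$. None of these three steps appears in your plan, and "lifting a class from $H^{n-1}(\bb{P}V,\mc{O}(-N))$ through connecting maps" is not by itself an argument that anything survives.

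Your treatment of part (2) is also off target. The paper handles (2) with a short argument: the Euler-type sequence $0\to D^d\mc{R}(e)\to D^dV\oo\mc{O}(e)\to D^{d-1}V\oo\mc{O}(e+1)\to 0$ gives a four-term exact sequence showing $H^0\neq 0$ whenever $e\geq d$ (a dimension count), while for $e=d-1$ it gives $\dim H^0=\dim H^1$, so the borderline case reduces to the threshold established in (1). By contrast, your claim that for $n\geq 4$ one can always exhibit a nonzero global section fails in the borderline subcase $a=d<p$: there $H^{n-2}(X,\mc{L})\cong H^0(\bb{P}V,\Sym^d\mc{R}(d-1))=0$ for every $n$ (e.g.\ $H^0(\bb{P}^3,\Omega^1(1))=0$), so no such section exists; this degenerate case $k=0$, where the threshold in (1) collapses to $a\geq d-{}$nothing, must be folded into the analysis (as the paper's proof does) rather than handled by constructing sections. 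The $n=3$ borderline you describe qualitatively is exactly what the dimension-count-plus-threshold argument makes precise, so adopting that mechanism would both simplify and correct this part of your plan.
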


To motivate the restrictions on $a,b$ in Theorem~\ref{thm:main}, we make the following observations (see also Section~\ref{sec:prelims}). As in Theorem~\ref{thm:main}, we write $\mc{L}=\mc{O}_X(a,b)$.
\begin{itemize}
    \item A special case of Kempf's vanishing theorem implies that the line bundle $\mc{L}$ has global sections if and only if $a,b\geq 0$, in which case $H^i(X,\mc{L})=0$ for all $i>0$.
    \item An easy argument shows that if $-n+2\leq a\leq -1$ or $-n+2\leq b\leq -1$ then $H^i(X,\mc{L})=0$ for all $i$.
    \item We have $\dim X=2n-3$, and the canonical bundle on $X$ is
    \[\omega_X = \mc{O}_X(-n+1,-n+1),\]
    hence by Serre duality we obtain
    \[ H^i(X,\mc{L}) = H^{2n-3-i}\left(X,\mc{O}_X(-n+1-a,-n+1-b)\right)^{\vee}.\]
    In particular, if $a,b\leq -n+1$ then by combining Serre duality and Kempf's vanishing, we conclude that $H^{2n-3}(X,\mc{L})\neq 0$, and $H^i(X,\mc{L})=0$ for $i\neq 2n-3$.
    \item By exchanging the roles of $V$ and $V^{\vee}$, we may replace $\mc{O}_X(a,b)$ with $\mc{O}_X(b,a)$, and based on the previous observations we may assume that $a\geq 0$ and $b\leq -n+1$.
    \item If $a\geq 0$, $b\leq -n+1$, and $a+b\leq -n+1$, then we can first exchange $\mc{O}_X(a,b)$ with $\mc{O}_X(b,a)$ and then apply Serre duality to conclude that
    \begin{equation}\label{eq:Serre+swap}
    H^i(X,\mc{L})\neq 0 \Longleftrightarrow H^{2n-3-i}(X,\mc{O}_X(-n+1-b,-n+1-a))\neq 0.
    \end{equation}
    Noting that $-n+1-b\geq 0$, $-n+1-a\leq -n+1$, and $(-n+1-b)+(-n+1-a)\geq -n+1$, we are reduced to studying line bundles $\mc{L}$ for which $a\geq -b-n+1\geq 0$.
    \item If $b=-n+1$ then using \eqref{eq:Serre+swap} and the equality
    \[H^{2n-3-i}(X,\mc{O}_X(0,-n+1-a)) = H^{2n-3-i}(\bb{P}V^{\vee},\mc{O}_{\bb{P}V^{\vee}}(-n+1-a)),\]
    we conclude that for $a\geq 0$ and $b=-n+1$ we have
    \[H^i(X,\mc{L})\neq 0 \Longleftrightarrow i=n-2.\]
\end{itemize}

To provide more context to Theorem~\ref{thm:main}, consider the complete flag variety $\bb{F}(V)$, let $\mc{Q}_i$ denote the tautological rank $i$ quotient sheaf on $\bb{F}(V)$, and let
\[ \mc{L}_i = \ker\left(\mc{Q}_i \onto \mc{Q}_{i-1}\right)\text{ for }i=1,\cdots,n.\]
The Picard group of $\bb{F}(V)$ is isomorphic to $\bb{Z}^{n-1}=\bb{Z}^n/\bb{Z}(1,\cdots,1)$, generated by $\mc{L}_1,\cdots,\mc{L}_n$, subject to the relation
\[\mc{L}_1\oo\cdots\oo\mc{L}_n \simeq \bw^n V \oo \mc{O}_{\bb{F}(V)} \simeq \mc{O}_{\bb{F}(V)}.\]
We write
\[\mc{O}_{\bb{F}(V)}(\lambda) = \mc{L}_1^{\lambda_1}\oo\cdots\oo \mc{L}_n^{\lambda_n}\text{ for }\lambda\in\bb{Z}^n.\]
One can identify the incidence correspondence $X$ with a partial flag variety, and as such there exists a natural forgetful map
\[ \pi:\bb{F}(V) \lra X,\]
obtained by forgetting the intermediate elements of a flag (except those in dimension and codimension one). In particular, $\pi$ is an isomorphism when $\dim(V)=3$, in which case Theorem~\ref{thm:main} is equivalent to the main result of \cite{griffith}. In general, we have
\[ \mc{O}_X(a,b) = \pi^*\left(\mc{O}_{\bb{F}(V)}(a,0,\cdots,0,-b)\right)\quad\text{ for all }a,b\in\bb{Z}.\]
Moreover, we have
\[\pi_*\mc{O}_{\bb{F}(V)}=\mc{O}_X\quad\text{ and }\quad R^j\pi_*\mc{O}_{\bb{F}(V)}=0\text{ for }j>0,\]
and it follows from the projection formula and the Leray spectral sequence that
\[ H^j(X,\mc{O}_X(a,b)) = H^j(\bb{F}(V),\mc{O}_{\bb{F}(V)}(a,0,\cdots,0,-b))\text{ for all }j,a,b.\]
We can then interpret Theorem~\ref{thm:main} as characterizing the (non-)vanishing of cohomology of the line bundles $\mc{O}_{\bb{F}(V)}(\lambda)$ when $\lambda_2=\cdots=\lambda_{n-1}=0$. Using a more Lie theoretic perspective, we can identify $\op{Pic}(X)$ with the rank $2$ slice of $\op{Pic}(\bb{F}(V))$ generated by the fundamental weights
\[ \omega_1=(1,0,\cdots,0)\quad\text{and}\quad \omega_{n-1}=(1,\cdots,1,0)\equiv(0,\cdots,0,-1),\]
and via this identification $\mc{O}_X(a,b)$ corresponds to $a\omega_1+b\omega_{n-1}$.

\begin{example}\label{ex:n=4-p=3} To illustrate Theorem~\ref{thm:main} and the subsequent discussion, we consider the following diagram depicting the case when $n=4$ and $p=3$. 
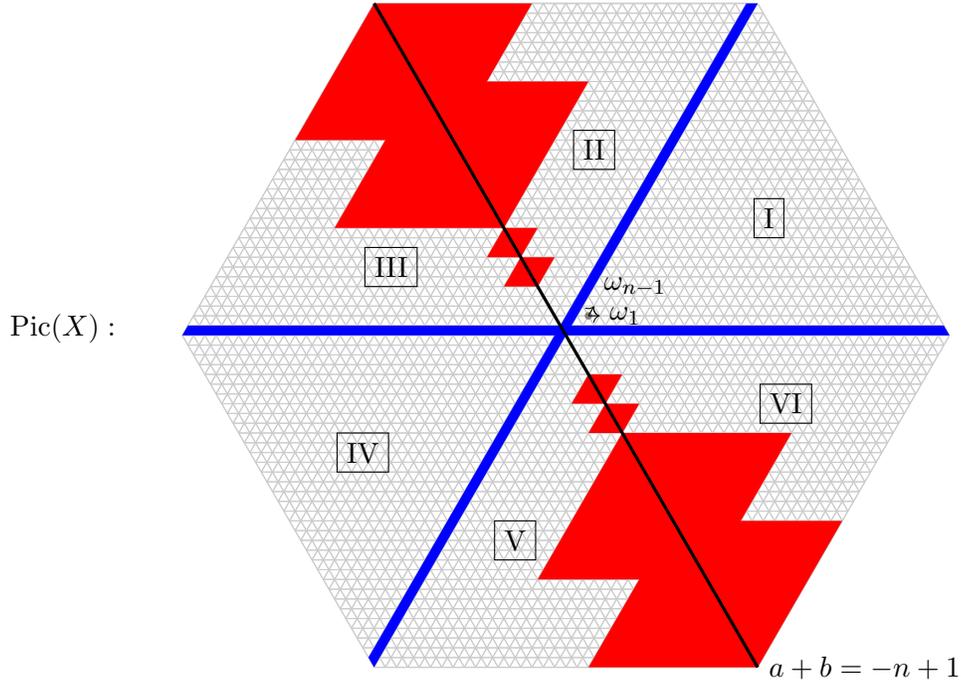
\begin{figure}[H]
\[\op{Pic}(X):\qquad
\begin{tikzpicture}[baseline=(O.base)]
\setlength\weightLength{.15cm}
\begin{rootSystem}{A}
\weightLattice{34}
\node (O) at \weight{0}{0} {};
\wt{1}{2}
\draw[->] \weight{1}{2} -- \weight{2}{2} 
    node[right]{$\omega_1$};
\draw[->] \weight{1}{2} -- \weight{1}{3} 
    node[above right]{$\omega_{n-1}$};
\fill[blue!100] \weight{-34}{0} -- \weight{34}{0} -- \weight{33}{1} -- \weight{-34}{1} -- cycle;
\fill[blue!100] \weight{0}{-34} -- \weight{0}{34} -- \weight{-1}{34} -- \weight{-1}{-33} -- cycle;
\fill[red!100] \weight{4}{-4} -- \weight{7}{-4} -- \weight{7}{-7} -- \weight{4}{-7} -- cycle;
\fill[red!100] \weight{7}{-7} -- \weight{10}{-7} -- \weight{10}{-10} -- \weight{7}{-10} -- cycle;
\fill[red!100] \weight{10}{-10} -- \weight{25}{-10} -- \weight{25}{-25} -- \weight{10}{-25} -- cycle;
\fill[red!100] \weight{19}{-19} -- \weight{34}{-19} -- \weight{34}{-34} -- \weight{19}{-34} -- cycle;
\fill[red!100] \weight{-5}{5} -- \weight{-8}{5} -- \weight{-8}{8} -- \weight{-5}{8} -- cycle;
\fill[red!100] \weight{-8}{8} -- \weight{-11}{8} -- \weight{-11}{11} -- \weight{-8}{11} -- cycle;
\fill[red!100] \weight{-11}{11} -- \weight{-26}{11} -- \weight{-26}{26} -- \weight{-11}{26} -- cycle;
\fill[red!100] \weight{-20}{20} -- \weight{-34}{20} -- \weight{-34}{34} -- \weight{-20}{34} -- cycle;
\node[draw] at \weight{12}{12} {I};
\node[draw] at \weight{-7}{19} {II};
\node[draw] at \weight{-19}{7} {III};
\node[draw] at \weight{-12}{-12} {IV};
\node[draw] at \weight{6}{-21} {V};
\node[draw] at \weight{23}{-7} {VI};
\draw[very thick] \weight{-34}{34} -- \weight{34}{-34} node[right]{$a+b=-n+1$};
\end{rootSystem}
\end{tikzpicture}
\]
\caption{The Picard group of the incidence correspondence, (non-)vanishing behavior for cohomology}
\label{fig:PicX}
\end{figure}

The weights contained in the blue region correspond to line bundles with vanishing cohomology (when either $a$ or $b$ is in $\{-1,-2,\cdots,-n+2\}$). Region $\op{I}$ gives the effective cone, where Kempf's vanishing theorem implies that all line bundles have vanishing higher cohomology. Serre duality exchanges regions $\op{I}\leftrightarrow \op{IV}$, $\op{II}\leftrightarrow \op{V}$, $\op{III}\leftrightarrow \op{VI}$, while the duality interchanging the vector space $V$ with $V^{\vee}$ fixes regions $\op{I}$ and $\op{IV}$, and exchanges $\op{II}\leftrightarrow \op{VI}$, $\op{III}\leftrightarrow \op{V}$. Theorem~\ref{thm:main} is then entirely focused on region $\op{VI}$, and by applying the dualities (and Kempf's theorem), one can deduce the results for the remaining regions. By the Borel--Weil--Bott theorem, over a field of characteristic zero the regions $\op{II}$ and $\op{VI}$ correspond to line bundles whose only non-vanishing cohomology is $H^{n-2}$, while for regions $\op{III}$ and $\op{V}$ the only non-vanishing cohomology is $H^{n-1}$. The red region indicates all the line bundles which in characteristic $p=3$ have $H^{n-2}\neq 0\neq H^{n-1}$, and have vanishing cohomology in all other degrees. For line bundles not in the red region, the dimensions (and characters) of all the cohomology groups are the same as in characteristic zero.
\end{example}

\medskip

\noindent{\bf Divided powers of the cotangent sheaf.} To give yet another perspective on $X$ and the cohomology of its line bundles, we consider the projective space $\PP = \bb{P}V = \op{Proj}(\Sym V) \simeq \PP^{n-1}$ (parametrizing $1$-dimensional quotients of $V$ or equivalently, $1$-dimensional subspaces of $V^{\vee}$). We consider the tautological sequence
\begin{equation}\label{eq:taut-ses-PV}
0 \lra \mc{R} \lra V\oo\mc{O}_{\PP} \lra \mc{Q} \lra 0,
\end{equation}
where $\mc{Q}=\mc{O}_{\PP}(1)$ is the tautological line bundle on $\PP$, and $\mc{R}\simeq\Omega^1_{\PP}(1)$ is the tautological subsheaf. We have that $X = \bb{P}\left(\mc{R}^{\vee}\right)$ is a projective bundle over $\PP$, and if we let $D^d$ denote the $d$-th \defi{divided power functor}, then the following holds.

\begin{theorem}\label{thm:FV-to-PV}
 Suppose that $b\leq -n$, $a\geq -b-n+1$, and let $d=-b-n+1$ and $\mc{L}=\mc{O}_X(a,b)$ as in Theorem~\ref{thm:main}. We have
 \begin{equation}\label{eq:Hj-X-vs-P}
 H^j(X,\mc{L}) \simeq H^{j-n+2}(\PP,D^d\mc{R}(a-1)).
 \end{equation}
 Moreover, for $d\geq 1$ and $t,k$ as in Theorem~\ref{thm:main}, the Castelnuovo--Mumford regularity of $D^d\mc{R}$ is given by
 \begin{equation}\label{eq:reg-DdR}
 \op{reg}(D^d\mc{R}) = (t+n-2)\cdot p^k-n+2.
 \end{equation}
\end{theorem}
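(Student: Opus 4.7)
The first claim (the isomorphism \eqref{eq:Hj-X-vs-P}) amounts to pure projective-bundle bookkeeping. Viewing $X=\bb{P}(\mc{R}^\vee)\xrightarrow{\pi}\PP$ as a $\bb{P}^{n-2}$-bundle, one identifies $\mc{O}_X(a,b)\cong\pi^*\mc{O}_\PP(a)\otimes\mc{O}_{\bb{P}(\mc{R}^\vee)}(b)$. Since $b\leq -n<-(n-1)=-\op{rank}(\mc{R}^\vee)$, the projective bundle formula combined with relative Serre duality yields $R^i\pi_*\mc{O}_X(a,b)=0$ for $i\neq n-2$, and
\[R^{n-2}\pi_*\mc{O}_X(a,b)\;\cong\;D^d\mc{R}(a-1)\otimes\det V,\]
using $D^s\mc{R}=(\Sym^s\mc{R}^\vee)^\vee$ and $\det\mc{R}=\det V\otimes\mc{O}_\PP(-1)$. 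The Leray spectral sequence then collapses, yielding \eqref{eq:Hj-X-vs-P} as a vector-space isomorphism (the $\det V$ factor being a $1$-dimensional $\kk$-space).

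For the regularity formula \eqref{eq:reg-DdR}, the plan is to bound $\op{reg}(D^d\mc{R})$ from both sides, matching at $(t+n-2)p^k-n+2$. For the lower bound, I would exploit the natural divided-power filtration of $D^d V\otimes\mc{O}_\PP$ coming from the tautological sequence $0\to\mc{R}\to V\otimes\mc{O}\to\mc{O}(1)\to 0$: its successive quotients are $D^i\mc{R}\otimes\mc{O}_\PP(d-i)$ for $0\leq i\leq d$, with $D^d\mc{R}$ sitting at the bottom. Chasing the induced long exact sequences and invoking the classical cohomology of line bundles on $\PP$ should exhibit a nonzero class in $H^1(\PP,D^d\mc{R}((t+n-2)p^k-n+1))$, which forces $\op{reg}(D^d\mc{R})\geq (t+n-2)p^k-n+2$.

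The upper bound is the heart of the theorem, and this is where the Frobenius-truncation strategy announced in the introduction enters. Using the $p$-adic decomposition $d=tp^k+r$ with $0\leq r<p^k$ and $1\leq t<p$, I would construct short exact sequences (or filtrations) capturing the Frobenius-induced structure of $D^d\mc{R}$ --- reflecting the fact that in characteristic $p$, $D^{p^k}\mc{R}$ contains the Frobenius twist $F^{k*}\mc{R}$ as a canonical subsheaf (via $v\mapsto v^{[p^k]}$), with analogous statements for $D^{tp^k}\mc{R}$. Combined with the effect of Frobenius pullback on twists ($F^*\mc{O}_\PP(1)=\mc{O}_\PP(p)$) and an induction on $d$, this should reduce the regularity computation to the base case $1\leq d<p$, where $D^d=\Sym^d$ and the Euler sequence handles the calculation directly. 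The main obstacle will be setting up the Frobenius-truncation sequences in a usable form and verifying that the ``remainder'' piece indexed by $r<p^k$ cannot push the regularity beyond the claimed bound --- this careful bookkeeping of Castelnuovo--Mumford regularity through the induction is precisely what accounts for the step-function shape of \eqref{eq:reg-DdR} (constant on $[tp^k,(t+1)p^k)$ and jumping at the boundary).
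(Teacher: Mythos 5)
Your treatment of \eqref{eq:Hj-X-vs-P} is correct and is essentially the paper's own argument (Section~\ref{subsec:bundles}): one computes $R^{n-2}\pi_*\mc{O}_X(a,b)\simeq D^d\mc{R}(a-1)\oo\det V$ with all other direct images vanishing, and the Leray spectral sequence collapses.

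For the regularity formula \eqref{eq:reg-DdR}, however, what you propose is an outline with a concrete error and a missing core. On the lower bound, your twist is off by one: a nonzero class in $H^1(\PP,D^d\mc{R}((t+n-2)p^k-n+1))$ would force $\reg(D^d\mc{R})\geq (t+n-2)p^k-n+3$, contradicting \eqref{eq:reg-DdR} itself (once the upper bound is known, that group vanishes); the group that must be shown nonzero is $H^1(\PP,D^d\mc{R}((t+n-2)p^k-n))$. More seriously, this non-vanishing cannot be extracted from ``long exact sequences plus classical cohomology of line bundles'': the filtration of $D^dV\oo\mc{O}_{\PP}$, or equivalently the sequence $0\to D^d\mc{R}\to D^dV\oo\mc{O}_{\PP}\to D^{d-1}V\oo\mc{O}_{\PP}(1)\to 0$, only identifies $H^1(\PP,D^d\mc{R}(E))$ with the cokernel of the map $D^dV\oo S^EV\to D^{d-1}V\oo S^{E+1}V$, and in characteristic zero that map is surjective for all $E\geq d-1$ (there $\reg(D^d\mc{R})=d$). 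The failure of surjectivity up to the much larger twist $(t+n-2)p^k-n$ is precisely the characteristic-$p$ phenomenon to be proved, and your outline provides no mechanism for it. The paper gets it by reducing to $d=tp^k$ via the monotonicity $\reg(D^d\mc{R})\geq\reg(D^{d-1}\mc{R})$ (Lemma~\ref{lem:reg-Dr-increases}, which itself requires an argument), then using the canonical surjection $D^{tq}\mc{R}\onto F^q(D^t\mc{R})$ for $q=p^k$: it computes $H^1(\PP,F^q(D^t\mc{R})((t+n-2)q-n))=F^q(\bb{S}_{(t-1,t-1)}V)\neq 0$ by Frobenius-pulling back the resolution of $S^t\mc{R}$ from Theorem~\ref{thm:sym-pows-R} (Lemma~\ref{lem:reg-Fq-Da-R}), and kills $H^2$ of the kernel using a regularity bound for truncated divided powers (Corollary~\ref{cor:reg-Tq-Db-R}).

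Relatedly, your structural claim that $D^{p^k}\mc{R}$ contains $F^{p^k}\mc{R}$ as a subsheaf spanned by the elements $v^{[p^k]}$ is false: $v\mapsto v^{[q]}$ is not additive (one has $(v+w)^{[q]}=\sum_{i+j=q} v^{[i]}w^{[j]}$), and the span of such elements is in general much larger than a Frobenius twist. The Frobenius power is a canonical \emph{subsheaf} of $S^q$ and a canonical \emph{quotient} of $D^q$; accordingly $D^d\mc{R}$ carries a filtration with composition factors $F^q(D^a\mc{R})\oo T_qD^{d-aq}\mc{R}$, $a=0,\cdots,t$, with $F^q(D^t\mc{R})\oo D^{d-tq}\mc{R}$ as the top quotient, and this direction matters for which cohomological degrees must be controlled. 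Finally, the upper-bound ``bookkeeping'' is not yet set up: the induction (in the paper, on $q$ rather than directly on $d$) closes only once one has the two quantitative inputs $\reg(F^q(D^a\mc{R}))=q(n+a-2)-n+2$ for $1\leq a<p$ (from the Frobenius pullback of the resolution in Theorem~\ref{thm:sym-pows-R}; this is what produces the value $(t+n-2)q-n+2$) and $\reg(T_qD^b\mc{R})\leq(n-1)(q-1)-n+2$ (via the duality $T_qD^b\mc{R}\simeq T_qS^{N_q(\mc{R})-b}(\mc{R}^{\vee})\oo\mc{O}_{\PP}(-q+1)$ and a Koszul resolution, Lemmas~\ref{lem:Sd-Rvee-0-reg}--\ref{lem:reg-TqSc-Rvee}), together with the vanishing criteria of Lemma~\ref{lem:H1vanishing-FqDr-tensor-G} and Proposition~\ref{prop:reg-FqDa-oo-TqDb}. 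None of these ingredients appears in your plan, so the promised reduction to the case $1\leq d<p$ does not yet go through.
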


To explain the last part of Theorem~\ref{thm:FV-to-PV}, and relate it to Theorem~\ref{thm:main}, we recall that a coherent sheaf $\mc{F}$ on projective space $\PP$ is said to be \defi{$m$-regular} (see for instance \cite{lazarsfeld-pos-I}*{Section~I.8}) if 
\begin{equation}\label{eq:def-m-reg}
H^i(\PP,\mc{F}(m-i))=0 \text{ for all }i>0.
\end{equation}
If $\mc{F}$ is $m$-regular, then it is also $m'$-regular for all $m'\geq m$. The \defi{(Castenuovo--Mumford) regularity} of $\mc{F}$ is then
\begin{equation}\label{eq:def-CM-reg}
 \reg(\mc{F}) = \min\{m : \mc{F}\text{ is $m$-regular}\}.
\end{equation}
Combining \eqref{eq:Hj-X-vs-P} with \eqref{eq:reg-DdR} and \eqref{eq:def-m-reg}, it follows that
\[H^{n-1}(X,\mc{L}) \simeq H^1(\PP,D^d\mc{R}(a-1))=0\text{ for }a\geq (t+n-2)\cdot p^k-n+2,\]
which is conclusion (1) of Theorem~\ref{thm:main}. 

\medskip

\noindent{\bf An extremal example.} It is elementary to verify that
\[ H^i\left(\PP,D^d\mc{R}(a-1)\right) = 0\text{ for }d,a\geq 0\text{ and }i\geq 2,\]
hence the assertion that $D^d\mc{R}$ is not $a$-regular for $a=(t+n-2)\cdot p^k-n+1$ is equivalent to the non-vanishing of $H^1(\PP,D^d\mc{R}(a-1))$. When $d=tp^k$ we have a precise description of this cohomology group (corresponding to a ``peak'' in the ``mountain range'' from region VI of Figure~\ref{fig:PicX}). We write $F^{p^k}$ for the subfunctor of $\Sym^{p^k}$ defined for each vector space $W$ by
\[ F^{p^k}W = \op{Span}_{\kk}\{w^{p^k} : w \in W\} \subseteq \Sym^{p^k}W.\]

\begin{theorem}\label{thm:corner-cohomology}
 For $1\leq t<p$ and $k\geq 0$, if we let 
 \[a=(t+n-2)p^k-n+1,\quad d=tp^k,\quad b=-d-n+1,\]
 then we have
 \[H^{n-1}\left(X,\mc{O}_X(a,b)\right) = H^1\left(\PP,D^{d}\mc{R}(a-1)\right)=F^{p^k}\left(\bb{S}_{(t-1,t-1)}V\right),\]
 which is the simple $\SL$-module of highest weight $p^k(t-1)\omega_2$. 
\end{theorem}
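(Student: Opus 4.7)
The plan is to reduce, via Theorem~\ref{thm:FV-to-PV}, to computing $H^1(\PP,D^d\mc{R}(a-1))$, and then extract this cohomology as the cokernel of an explicit $\GL(V)$-equivariant map, which I then identify with $F^{p^k}(\bb{S}_{(t-1,t-1)}V)$. The first step uses that the tautological quotient $V\otimes\mc{O}_\PP\onto\mc{O}_\PP(1)$ is a line bundle: applying $\Sym^d$ to the dual of \eqref{eq:taut-ses-PV} yields the Koszul-type resolution $0\to \Sym^{d-1}V^\vee\otimes\mc{O}_\PP(-1)\to \Sym^d V^\vee\otimes\mc{O}_\PP\to \Sym^d\mc{R}^\vee\to 0$, which dualizes to
\[ 0 \lra D^d\mc{R} \lra D^d V \otimes \mc{O}_\PP \lra D^{d-1}V \otimes \mc{O}_\PP(1) \lra 0. \]
Twisting by $\mc{O}_\PP(a-1)$ (with $a-1\geq 0$ in our regime), the long exact sequence in cohomology collapses, because $H^i(\PP,\mc{O}_\PP(j))=0$ for $0<i<n-1$ always, and $H^{n-1}(\PP,\mc{O}_\PP(j))=0$ whenever $j\geq -n+1$. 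One obtains
\[ H^1\bigl(\PP, D^d\mc{R}(a-1)\bigr) \;=\; \op{coker}\Bigl( D^d V \otimes \Sym^{a-1}V \xrightarrow{\;\phi\;} D^{d-1}V \otimes \Sym^a V \Bigr), \]
where $\phi$ is the natural composition of the comultiplication $D^d V \to D^{d-1}V\otimes V$ with the multiplication $V\otimes\Sym^{a-1}V\to\Sym^a V$. This reduces the geometric statement to a purely representation-theoretic question.

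The core of the proof is identifying $\op{coker}(\phi)$ with $F^{p^k}(\bb{S}_{(t-1,t-1)}V)$. I would first dispatch the base case $k=0$, where $d=t$, $a=t-1$, and $1\leq t<p$: in this small-weight regime all relevant divided power functors coincide with symmetric powers, the Pieri decompositions $\Sym^{t-1}V\otimes\Sym^{t-1}V=\bigoplus_{j=0}^{t-1}\bb{S}_{(2t-2-j,j)}V$ and $\Sym^t V\otimes\Sym^{t-2}V=\bigoplus_{j=0}^{t-2}\bb{S}_{(2t-2-j,j)}V$ hold, and a direct computation shows that $\phi$ is injective with image containing every summand except the $j=t-1$ term, giving $\op{coker}(\phi)=\bb{S}_{(t-1,t-1)}V$. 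For $k\geq 1$, I would exhibit an explicit highest-weight vector of $\GL_n$-weight $(tp^k-1,\,tp^k-1,\,p^k-1,\,\ldots,\,p^k-1)$ in $D^{d-1}V\otimes\Sym^a V$ whose image in the cokernel is nonzero and generates a copy of the simple module $L((t-1)p^k\omega_2)=F^{p^k}(\bb{S}_{(t-1,t-1)}V)$. The non-triviality of this vector modulo $\op{im}(\phi)$ is guaranteed by the sharp regularity statement in Theorem~\ref{thm:FV-to-PV}, which pins $a$ as precisely the boundary twist where $H^1$ first becomes non-zero.

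The hard part is the reverse inclusion: showing that $\op{coker}(\phi)$ has \emph{no} extra composition factors beyond $L((t-1)p^k\omega_2)$. In positive characteristic, cokernels of $\GL(V)$-equivariant maps between tensor products of divided and symmetric powers typically carry many composition factors, and ruling out additional summands combinatorially is delicate. My expectation is that the cleanest route is a sheaf-level Frobenius argument on $\PP$: decompose $D^{tp^k}\mc{R}$ via a filtration whose associated graded records its ``Frobenius content'', show that all but the top, Frobenius-twisted graded piece has strictly smaller Castelnuovo--Mumford regularity than $(t+n-2)p^k-n+2$, and conclude by invoking the $k=0$ computation after the appropriate $(F^k)^*$-pullback. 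Making this precise, while controlling the lower-order terms through the regularity estimates supplied by Theorem~\ref{thm:FV-to-PV}, is the principal remaining difficulty.
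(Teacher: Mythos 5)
Your reduction to $H^1\bigl(\PP,D^d\mc{R}(a-1)\bigr)=\op{coker}(\phi)$ is fine, and your sketch for $k\geq 1$ (filter $D^{tp^k}\mc{R}$ by Frobenius content, kill the lower pieces by regularity, identify the top piece with a Frobenius pullback of the $k=0$ case) is in fact the strategy the paper uses. But as written the proposal has a genuine gap, and you name it yourself: the statement that all lower pieces of the filtration contribute nothing is never proved. Concretely, with $q=p^k$ and $e=(t+n-2)q-n+2$, one needs $H^1$ and $H^2$ of each factor $F^q(D^j\mc{R})\oo T_qD^{(t-j)q}\mc{R}$, $0\leq j\leq t-1$, to vanish after twisting by $\mc{O}_{\PP}(e-2)$ — note this is one twist \emph{below} the regularity of $D^d\mc{R}$, so it does not follow formally from Theorem~\ref{thm:FV-to-PV}. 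Establishing these vanishings is the technical core of the paper's argument: it requires the duality $T_qD^b\mc{R}\simeq T_qS^{N_q-b}(\mc{R}^{\vee})\oo\mc{O}_{\PP}(-q+1)$, the Koszul-type resolutions of truncated symmetric powers of $\mc{R}^{\vee}$, the regularity bounds of Lemmas~\ref{lem:Sd-Rvee-0-reg}--\ref{lem:reg-TqSc-Rvee} and Corollary~\ref{cor:reg-Tq-Db-R}, and the $H^1$-vanishing criterion of Lemma~\ref{lem:H1vanishing-FqDr-tensor-G} applied with the specific numerical inequalities checked in the proof of Theorem~\ref{thm:corner-cohomology}. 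Declaring this "the principal remaining difficulty" means the proof is a plan rather than a proof. Relatedly, one cannot simply "invoke the $k=0$ computation after $(F^k)^*$-pullback," since Frobenius pullback does not commute with cohomology; the correct fix (and the paper's, in Lemma~\ref{lem:reg-Fq-Da-R}) is to pull back the explicit linear resolution of $\Sym^t\mc{R}$ from Theorem~\ref{thm:sym-pows-R} and rerun the hypercohomology spectral sequence, which yields $H^1\bigl(\PP,F^q(D^t\mc{R})(e-2)\bigr)=F^q(\bb{S}_{(t-1,t-1)}V)$ directly.

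A secondary problem is the base case $k=0$. The Pieri decompositions $\Sym^{t-1}V\oo\Sym^{t-1}V\simeq\bigoplus_j\bb{S}_{(2t-2-j,j)}V$ you invoke are characteristic-zero statements; in characteristic $p$ these tensor products have good filtrations but generally do not split into Schur modules (already the multiplication $\Sym^{p-1}V\oo V\to\Sym^pV$ admits no obvious equivariant splitting), so "the image contains every summand except $j=t-1$" does not parse. The conclusion is nonetheless correct and can be rescued either as the paper does — via the Kempf--Weyman resolution of $\Sym^t\mc{R}$ in Theorem~\ref{thm:sym-pows-R}, which is characteristic-free — or by combining injectivity of $\phi$ (i.e.\ $H^0(\PP,\Sym^t\mc{R}(t-2))=0$) with a character count and the simplicity of $\bb{S}_{(t-1,t-1)}V$ from Remark~\ref{rem:simplicity}; either way an argument must replace the appeal to a direct sum decomposition.
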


\begin{remark}\label{rem:simplicity}
  Since the highest weight of $\bb{S}_{(t-1,t-1)}V$ is $(t-1)\omega_2$, it follows that the highest weight of $F^{p^k}\left(\bb{S}_{(t-1,t-1)}V\right)$ is $p^k(t-1)\omega_2$, so the last assertion of Theorem~\ref{thm:corner-cohomology} is about the simplicity of the displayed module. Since the Frobenius functor preserves simplicity \cite{humphreys}*{Section~2.7}, it suffices to explain why $\bb{S}_{(t-1,t-1)}V$ is simple. This is equivalent to the simplicity of the corresponding Weyl module $\left(\bb{S}_{(t-1,t-1)}(V^{\vee})\right)^{\vee}$, which follows for instance from \cite{james-decomp}*{Corollary~3.5}, \cite{james-conj-carter}*{Corollary~2.4} and the fact that none of the hook lengths of the Young diagram of shape $(t-1,t-1)$ has size divisible by $p$. Our contribution is therefore just the explicit calculation of the displayed cohomology group.
\end{remark}

\medskip

\noindent{\bf Character calculations.} We turn our attention to the question of computing characters of the $\SL$-representations underlying the cohomology groups discussed so far. Our results will single out \defi{truncated Schur functions} as the main building blocks for cohomology. We fix an identification $V\simeq\kk^n$ and write
\[ A=\bb{Z}[x_1,\cdots,x_n]/\langle x_1\cdots x_n-1\rangle\]
for the representation ring of the maximal torus $(\kk^{\times})^n/(\kk^{\times})$ in  $\SL_n\simeq\SL(V)$. We write $[W]$ for the character of a finite-dimensional representation $W$, and note that the endomorphism ${}^{\vee}:A\lra A$ defined by $x_i\mapsto x_i^{-1}$ has the property that
\[ [W^{\vee}] = [W]^{\vee}\text{ for all }W.\]
Similarly, the endomorphism $F^p:A\lra A$ sending $x_i \lra x_i^p$ satisfies
\[ [F^p W] = F^p[W]\text{ for all }W.\]
We consider the \defi{elementary symmetric functions} $e_d$, the \defi{complete symmetric functions} $h_d$ and the \defi{modular (or truncated) complete symmetric functions} $h'_d$, defined by (see also Section~\ref{subsec:trunc-div-sym})
\[ e_d = \left[\bw^d V\right],\quad h_d = [\Sym^d V] = [D^d V], \quad h'_d = [T_p\Sym^d V] = [T_p D^d V]=\sum_{\substack{a_1+\cdots+a_n=d \\ 0\leq a_i<p}}x_1^{a_1}\cdots x_n^{a_n}.\]
Notice that $h_d=h'_d$ when $d<p$, and $e_d=0$ for $d>n$. We make the convention $e_d=h_d=0$ for $d<0$. We also consider the \defi{Schur functions} $s_\ll = [\bb{S}_{\ll}V]$ and corresponding \defi{modular Schur functions} $s'_\ll$ obtained by replacing $h_d$ with $h'_d$ in the Jacobi-Trudi identity (see \cite{walker} for a more detailed discussion). The following identities (which the reader can take as definitions) will suffice for our discussion: for $a\geq b\geq 0$, we have
\[ s_{(a,b)} = h_a\cdot h_b-h_{a+1}\cdot h_{b-1},\quad s'_{(a,b)} = h'_a\cdot h'_b-h'_{a+1}\cdot h'_{b-1}.\]

\begin{theorem}\label{thm:chars-d<2p}
 Suppose that $p\leq d<2p$ and that $e\geq d-1$. We have
 \[ \left[H^1(\PP,D^d\mc{R}(e))\right] = s'_{(e+p,d-p)}.\]
\end{theorem}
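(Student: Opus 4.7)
For $p \le d < 2p$, the divided power $D^d V$ sits in the $\GL$-equivariant short exact sequence
\[ 0 \lra F^p V \oo D^{d-p}V \lra D^d V \lra T_p D^d V \lra 0, \]
whose left map is multiplication in the divided power algebra; this is injective because $d < 2p$ forces any monomial in the image to have exactly one exponent $\ge p$, so there is no Frobenius-collision. Sheafifying on $\PP$ yields
\[ 0 \lra F^*\mc{R} \oo D^{d-p}\mc{R} \lra D^d \mc{R} \lra T_p D^d \mc{R} \lra 0, \]
where $F^*\mc{R}$ denotes the Frobenius pullback. My plan is to twist by $\mc{O}(e)$, take the long exact sequence in cohomology, and extract $H^1(\PP, D^d \mc{R}(e))$ from the pieces.

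The first step is the vanishing $H^i(\PP, T_p D^d \mc{R}(e)) = 0$ for $i \ge 1$ and $e \ge d - 1$, which I would obtain from a regularity bound $\reg(T_p D^d \mc{R}) \le d$, analogous to (but sharper than) the bound for $D^d \mc{R}$ in Theorem~\ref{thm:FV-to-PV}. With this vanishing in hand, $H^1(\PP, D^d \mc{R}(e))$ is realized as a quotient of $H^1(\PP, F^*\mc{R} \oo D^{d-p}\mc{R}(e))$.

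To compute the latter, I would use the Frobenius-pulled-back tautological sequence
\[ 0 \lra F^*\mc{R} \lra F^p V \oo \mc{O} \lra \mc{O}(p) \lra 0, \]
tensor with $D^{d-p}\mc{R}(e)$, and exploit that $D^{d-p}\mc{R} = \Sym^{d-p}\mc{R}$ (since $d - p < p$) has cohomology concentrated in degree zero by Theorem~\ref{thm:FV-to-PV} applied with $d$ replaced by $d-p$, equal to the Schur module $\bb{S}_{(e, d-p)}V$ for the twist by $\mc{O}(e)$ and $\bb{S}_{(e+p, d-p)}V$ for the twist by $\mc{O}(e+p)$. The resulting four-term exact sequence expresses $H^1(\PP, F^*\mc{R} \oo D^{d-p}\mc{R}(e))$ explicitly.

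The main obstacles I anticipate are threefold: (i) proving $\reg(T_p D^d \mc{R}) \le d$, which is the deepest technical ingredient and presumably requires a truncated analog of the Koszul-type resolution of $D^d \mc{R}$ developed earlier in the paper; (ii) analyzing the connecting homomorphism through the map $H^0(\PP, D^d \mc{R}(e)) \to H^0(\PP, T_p D^d \mc{R}(e))$, whose cokernel cuts down $H^1(\PP, F^*\mc{R} \oo D^{d-p}\mc{R}(e))$ to $H^1(\PP, D^d \mc{R}(e))$; and (iii) verifying the resulting character identity, where the Frobenius contribution $F^p h_1 = \sum_i x_i^p$ must combine with the Schur characters $s_{(e, d-p)}$ and $s_{(e+p, d-p)}$ to produce precisely $s'_{(e+p, d-p)} = h'_{e+p}\,h_{d-p} - h'_{e+p+1}\,h_{d-p-1}$ (using $h'_{d-p} = h_{d-p}$ and $h'_{d-p-1} = h_{d-p-1}$ since $d - p < p$).
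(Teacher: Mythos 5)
Your overall strategy (split $D^d\mc{R}$ using the Frobenius truncation, kill the truncated piece in positive degrees, and compute the remaining piece via the Frobenius-twisted tautological sequence) is close in spirit to the paper's Section~5, but two steps break down. First, your short exact sequence has the wrong orientation: there is no natural map $F^pV\oo D^{d-p}V\to D^dV$, since $F^pV$ is a natural \emph{quotient} of $D^pV$ (dual to $F^p(V^{\vee})\subset S^p(V^{\vee})$) and not a natural subspace -- the assignment $v\mapsto v^{(p)}$ is not additive -- and your ``no Frobenius-collision'' monomial argument is really an argument about symmetric powers, where the sub/quotient roles are reversed. The correct sequence, used in the paper, is
\begin{equation*}
0 \lra T_pD^d\mc{R} \lra D^d\mc{R} \lra F^p\mc{R}\oo D^{d-p}\mc{R} \lra 0 ,
\end{equation*}
with $T_pD^d\mc{R}$ the subsheaf; already for $d=p$ your version would force an embedding $F^pV\hookrightarrow D^pV$, which is impossible (the socle of $D^pV$ is $\bw^pV$-type, not $F^pV$). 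This is not just a bookkeeping issue: with the correct orientation, the vanishing $H^1=H^2=0$ for $T_pD^d\mc{R}(e)$ (the paper's Lemma~\ref{lem:TpDdR-vanishing-coh}, proved from the presentation $0\to F^p\mc{R}\oo S^{d-p}\mc{R}\to S^d\mc{R}\to T_pS^d\mc{R}\to 0$ together with Theorem~\ref{thm:sym-pows-R} and Lemma~\ref{lem:vanishing-FqDr-tensor-G}, rather than from a regularity bound $\reg(T_pD^d\mc{R})\le d$, which you leave unproved) immediately gives an \emph{isomorphism} $H^1(\PP,D^d\mc{R}(e))\simeq H^1(\PP,F^p\mc{R}\oo S^{d-p}\mc{R}(e))$, so the connecting-homomorphism analysis you list as obstacle~(ii) disappears; in your reversed setup it is an unresolved, and unnecessary, problem.

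Second, and more seriously, your computation of $H^1(\PP,F^p\mc{R}\oo S^{d-p}\mc{R}(e))$ does not close. Tensoring $0\to F^p\mc{R}\to F^pV\oo\mc{O}\to\mc{O}(p)\to 0$ with $S^{d-p}\mc{R}(e)$ gives the four-term exact sequence
\begin{equation*}
0 \lra H^0(F^p\mc{R}\oo S^{d-p}\mc{R}(e)) \lra F^pV\oo \bb{S}_{(e,d-p)}V \lra \bb{S}_{(e+p,d-p)}V \lra H^1(F^p\mc{R}\oo S^{d-p}\mc{R}(e)) \lra 0,
\end{equation*}
which only pins down the Euler characteristic $h^0-h^1=F^p(h_1)\cdot s_{(e,d-p)}-s_{(e+p,d-p)}$; the character of the cokernel of the middle map is not determined by the characters of its source and target, and $H^0$ here is typically nonzero, so this does not yield $s'_{(e+p,d-p)}$. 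This is exactly where the paper does something different: it applies $F^p$ to the full length-$(n-1)$ linear resolution of $\mc{R}$ (the case $a=1$ of Theorem~\ref{thm:sym-pows-R}), tensors with $S^{d-p}\mc{R}(e)$, and uses the fact (Lemma~\ref{lem:coh-twists-Dtr}) that the twists $S^{d-p}\mc{R}(e-(i+1)p)$ have cohomology only in degrees $0,1,n-1$; the hypercohomology spectral sequence then exhibits $H^1$ as the kernel of an explicit exact complex of $H^{n-1}$-terms, whence an alternating-sum character formula that the Koszul identity $h'_r=\sum_i(-1)^iF^p(e_i)h_{r-ip}$ and the duality $(h'_r)^{\vee}=h'_{n(p-1)-r}$ convert into $s'_{(e+p,d-p)}$. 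Without this (or some substitute determining the rank of the map $F^pV\oo\bb{S}_{(e,d-p)}V\to\bb{S}_{(e+p,d-p)}V$), your obstacle~(iii) is not a verification but the missing core of the proof.
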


For an example of the above character formula, note that when $d=p$ and $e=(n-1)p-n$ we get
\[s'_{(e+p,d-p)} = s'_{(n(p-1),0)} = h'_{n(p-1)} = (x_1\cdots x_n)^{p-1}=1,\]
so $H^1(\PP,D^d\mc{R}(e))$ is the trivial $\SL$-module as shown in the special case $t=k=1$ of Theorem~\ref{thm:corner-cohomology}. The extremal case $e=d-1$ was considered in \cite{liu-polo}*{Proposition~1.3.1}, where it was shown that the respective cohomology groups are irreducible $\SL$-representations. We will give an alternative explanation of irreducibility based on \cites{doty-walker,walker}, and explain why this property may fail for more general values of $e$ (see Example~\ref{ex:reducible-coh}).

\medskip

\noindent{\bf The case $n=3$.} Generalizing Theorem~\ref{thm:chars-d<2p} to arbitrary $d,e$ remains an open problem for a general $n$. We specialize further our discussion to the case $n=3$ and aim to give a recursive formula for the characters
\begin{equation}\label{eq:def-h0-h1-de} 
h^0(d,e) = \left[H^0(\PP,D^d\mc{R}(e-1)) \right]\quad\text{and}\quad h^1(d,e)=\left[H^1(\PP,D^d\mc{R}(e-1)) \right]\quad\text{ when }d,e\geq 0, 
\end{equation}
as well as an explicit non-recursive formula when the characteristic of the underlying field is $p=2$. The recursion is originally due to Linyuan Liu \cite{liu-thesis}*{Th\'eor\`eme~1}, and we merely offer a different perspective from a classical algebraic geometry point of view. Our choice of notation \eqref{eq:def-h0-h1-de} is motivated in part by \eqref{eq:Hj-X-vs-P}, but also by trying to make the role of the parameters $d,e$ appear more symmetric: if we write $\PP^{\vee}$ for $\bb{P}(V^{\vee})$ and $X^{\vee}$ for the incidence variety obtained by exchanging $V$ with $V^{\vee}$ then we get using \eqref{eq:Hj-X-vs-P} and Serre duality
\[\begin{aligned} H^j(\PP,D^d\mc{R}(e-1)) &= H^{j+n-2}(X,\mc{O}_X(e,-d-2)) = H^{j+n-2}(X^{\vee},\mc{O}_{X^{\vee}}(-d-2,e)) \\
&= H^{n-1-j}(X^{\vee},\mc{O}_{X^\vee}(d,-e-2))^{\vee} = H^{1-j}(\PP^{\vee},D^e\mc{R}(d-1))^{\vee}.
\end{aligned}
\]
Since substituting $V$ with $V^{\vee}$ and then dualizing the resulting representation does not change characters, we conclude that
\[ h^0(d,e)=h^1(e,d) \text{ for all }d,e\geq 0.\]
We need some more notation in order to state our result. We first generalize the notion of modular Schur functions to allow for truncations along iterates of Frobenius. We define the \defi{$q$-truncated complete symmetric functions $h^{(q)}_d$}  and \defi{$q$-truncated Schur functions $s^{(q)}_{(a,b)}$} via
\[ h^{(q)}_d = \sum_{\substack{a_1+\cdots+a_n=d \\ 0\leq a_i<q}}x_1^{a_1}\cdots x_n^{a_n},\qquad s^{(q)}_{(a,b)} = h^{(q)}_a\cdot h^{(q)}_b - h^{(q)}_{a+1}\cdot h^{(q)}_{b-1},\]
observing that $h^{(p)}_d=h'_d$ and $s^{(p)}_{(a,b)}=s'_{(a,b)}$.
We also define
\[ \chi(d,e) = h^0(d,e)-h^1(d,e),\]
which is the $\SL$-equivariant Euler characteristic of $D^d\mc{R}(e-1)$, and is given as in characteristic zero by
\[\chi(d,e) = \begin{cases}
s_{(e-1,d)} & \text{if }e>d,\\
-s_{(d-1,e)} & \text{if }d>e,\\
0 & \text{otherwise.}
\end{cases}\]
It follows that it is enough to recursively compute $h^1(d,e)$, which we do as follows -- note that $h^1(0,e)=0$ for all $e$, so we will assume that $d>0$.

\begin{theorem}\label{thm:recursion-sl3}
 Suppose that $n=3$, and let $1\leq t<p$ and $q=p^k$ such that $tq\leq d<(t+1)q$. We have
 \begin{itemize}
    \item If $e>(t+1)q-2$ then 
    \[h^1(d,e)=0.\]
    \item If $e< tq$ then 
    \[h^1(d,e)=s_{(d-1,e)}.\]
    \item If $tq\leq e\leq(t+1)q-2$ then
    \begin{equation}\label{eq:main-rec-sl3}
    \begin{aligned}
    h^1(d,e)=F^q(h_t^{\vee})\cdot h^1(d-tq,e-tq) &+ F^q(h_{t-1}^{\vee})\cdot s^{(q)}_{(e-1+(2-t)q,d-tq)}\\
    &+F^q(h_{t-2}^{\vee})\cdot h^0(q(t+1)-d-2,q(t+1)-e-2)^{\vee}.
    \end{aligned}
    \end{equation}
 \end{itemize}
\end{theorem}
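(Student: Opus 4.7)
Using Theorem~\ref{thm:FV-to-PV}, the problem reduces to computing $[H^j(\PP, D^d\mc{R}(e-1))]$ on $\PP \simeq \PP^2$. The formula $\reg(D^d\mc{R}) = (t+1)q-1$ (the $n=3$ specialization of \eqref{eq:reg-DdR}) immediately handles the case $e > (t+1)q-2$, since this is precisely the range of $e$ in which $H^1$ vanishes. For $e < tq$, I would invoke the symmetry $h^0(d,e) = h^1(e,d)$ derived in the paragraph before the theorem from Serre duality and the $V \leftrightarrow V^\vee$ exchange. A short case analysis shows that the $p$-adic parameters $(t',q')$ attached to $e$ satisfy $(t'+1)q' \le d+1$: if the leading digit of $e$ sits at position $k$ then $t' \le t-1$ and $(t'+1)q' \le tq \le d$; if it sits at a strictly lower position, then $(t'+1)q' \le p \cdot p^{k'} \le p^k = q \le d$. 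Hence the first case applies to $h^1(e,d)$ and yields $h^0(d,e)=0$, so $h^1(d,e) = -\chi(d,e) = s_{(d-1,e)}$.

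For the main range $tq \le e \le (t+1)q-2$, my plan is to construct a three-step filtration
\[
0 \subset \mc{F}_0 \subset \mc{F}_1 \subset \mc{F}_2 = D^d\mc{R}
\]
of $\SL$-equivariant sheaves on $\PP$, whose associated graded pieces take the form $\mc{F}_i/\mc{F}_{i-1} \cong D^{t-i}(\mc{R}^{(q)}) \otimes \mc{G}_i$ for $i=0,1,2$, where each $\mc{G}_i$ is a truncated divided power sheaf built from $\mc{R}$ and $\mc{R}^{(q)} = F^{q*}\mc{R}$. The construction should follow from combining the Frobenius-twisted tautological sequence
\[
0 \to \mc{R}^{(q)} \to V^{(q)} \otimes \mc{O}_{\PP} \to \mc{O}_{\PP}(q) \to 0,
\]
the multiplicative structure of the divided power algebra $\Gamma(\mc{R})$, and the divided $q$-th power map $\mc{R}^{(q)} \to D^q\mc{R}$. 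That exactly three nonzero graded pieces appear reflects the rank of $\mc{R}$ being $2$ together with the constraint $t<p$.

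After twisting by $\mc{O}(e-1)$ and passing to the cohomology long exact sequences, I would use the regularity bound \eqref{eq:reg-DdR} applied to each graded piece to ensure that only one cohomology degree survives in the range $tq \le e \le (t+1)q-2$, so the sequences collapse additively at the level of characters. Each $D^{t-i}(\mc{R}^{(q)})$ factor contributes the Frobenius-twisted character $F^q(h_{t-i}^\vee)$. The remaining cohomology of each piece is then identified as (i) $H^1(\PP, D^{d-tq}\mc{R}(e-tq-1)) = h^1(d-tq,e-tq)$, the recursive term; (ii) the $H^1$ of a truncated divided power sheaf, whose character equals $s^{(q)}_{(e-1+(2-t)q, d-tq)}$ by the Jacobi--Trudi identity defining modular Schur functions; and (iii) via Serre duality on $\PP^2$, the dual term $h^0(q(t+1)-d-2, q(t+1)-e-2)^\vee$. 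Summing the three contributions produces \eqref{eq:main-rec-sl3}.

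\emph{Main obstacle.} The central difficulty is constructing the three-step filtration explicitly and verifying that its graded pieces enjoy the precise cohomological vanishing needed to decouple the long exact sequences. The range $tq \le e \le (t+1)q-2$ is calibrated so that the regularity estimates of the graded pieces combine into a clean additive collapse; any slippage would introduce spurious correction terms. Tracking the interplay between Frobenius pullback, divided-power multiplication, and the $q$-th divided power map $\mc{R}^{(q)} \to D^q\mc{R}$, while correctly extracting the Frobenius-twisted characters $F^q(h_{t-i}^\vee)$ without residue, constitutes the bulk of the technical work.
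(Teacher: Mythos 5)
Your treatment of the two easy cases is fine and matches the paper: $e>(t+1)q-2$ follows from the regularity formula, and $e<tq$ follows from $h^0(d,e)=h^1(e,d)$ plus the same vanishing applied to the parameters of $e$. The trouble is in the main range $tq\le e\le (t+1)q-2$, where your plan has two genuine gaps. First, the three-step filtration you propose does not exist. Since $\rk(\mc{R})=2$ we have $T_qD^b\mc{R}=0$ for $b>2(q-1)$, so the Frobenius filtration \eqref{eq:filtration-DdE} of $D^d\mc{R}$ degenerates (for $n=3$) to a single short exact sequence with only the two pieces $F^q(D^{t-1}\mc{R})\oo T_qD^{d-(t-1)q}\mc{R}$ and $F^q(D^{t}\mc{R})\oo D^{d-tq}\mc{R}$; there is no sheaf-level factor involving $D^{t-2}(\mc{R}^{(q)})$. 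The three terms of \eqref{eq:main-rec-sl3} do not correspond to three filtration pieces: the $F^q(h_{t-2}^\vee)$-term comes from the trivial-bundle factor $F^q(S^{t-2}V^{\vee})$ in the two-term resolution of $F^q(D^{t-1}\mc{R})$ (followed by Serre duality on $\PP^2$), and the $F^q(h_{t-1}^\vee)\cdot s^{(q)}$-term comes from the $H^2$ of a negative twist of $D^{d-tq}\mc{R}$, computed via the incidence correspondence, Kempf vanishing on the flag variety and Serre duality, attached to the \emph{top} piece. Relatedly, the character of $D^{t-i}(\mc{R}^{(q)})$ is not $F^q(h_{t-i}^\vee)$ (wrong rank); these characters only appear after resolving by trivial bundles, and a single piece can contribute to two different terms of the recursion, so your accounting of "each factor contributes $F^q(h_{t-i}^\vee)$" cannot be carried out as stated.

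Second, the assertion that regularity estimates make the long exact sequences "collapse additively at the level of characters" is not true and hides the hardest step. In this range both $H^0$ of the top piece and $H^1$ of the bottom piece are in general nonzero, so one must prove that the connecting homomorphism
\[
H^0\bigl(\PP,F^q(D^t\mc{R})\oo D^{d-tq}\mc{R}(e-1)\bigr)\lra H^1\bigl(\PP,F^q(D^{t-1}\mc{R})\oo T_qD^{d-(t-1)q}\mc{R}(e-1)\bigr)
\]
vanishes. No regularity bound gives this; in the paper it is proved by comparing composition factors via the Steinberg tensor product theorem, using the fact that the relevant weights are $q$-restricted, and that input comes from the highest-weight statement (Theorem~\ref{thm:sl3-highest-weight-h1}) for smaller parameters, so the recursion and the highest-weight theorem must be proved by a simultaneous induction on $d$. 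Your proposal contains neither the connecting-map argument nor the inductive structure it requires, so as written the main case does not go through.
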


Observe that when $p\leq d\leq e\leq 2p-2$ we get $t=1$ and $q=p$, so Theorem~\ref{thm:recursion-sl3} yields
\[h^1(d,e) = F^p(h_1^{\vee})\cdot h^1(d-p,e-p) + s^{(p)}_{(e-1+p,d-p)} = s^{(p)}_{(e-1+p,d-p)},\]
since $h^1(d-p,e-p)=0$ follows by taking $t=d-p$ and $q=1$ in the first case of the recursion. In particular, Theorem~\ref{thm:recursion-sl3} leads to the same conclusion as Theorem~\ref{thm:chars-d<2p} in this special case.

It is elementary to translate Theorem~\ref{thm:recursion-sl3} into a recursive relation for the cohomology of line bundles on $X$, and we leave the translation to the interested reader. As a first application, we describe the (lexicographically) highest weight in the representation $H^1(\PP,D^d\mc{R}(e-1))$: every weight for $\SL_3$ is of the form 
\[\ll=(a,b,0)=(a-b)\omega_1+b\omega_2, \]
and the \defi{lexicographic order} on such weights is defined by
\[ (a_1,b_1,0)>(a_2,b_2,0)\quad\text{ if }\quad a_1>a_2\quad\text{ or }\quad (a_1=a_2\text{ and }b_1>b_2).\]

\begin{theorem}\label{thm:sl3-highest-weight-h1}
 Suppose that $n=3$, $d,e\geq 0$, and that $h^1(d,e)\neq 0$. 
 \begin{enumerate}
     \item If $d>e$ then the (lexicographically) highest weight in $h^1(d,e)$ is 
     \[(d-1-e)\omega_1+e\omega_2 = (d-1,e,0).\]
     \item If $d\leq e$, then there exists $q'=p^s$, $m>0$, satisfying
     \[ d = mq'+d',\quad e=mq'+e',\quad p\nmid m, \quad 0\leq d',e'\leq q'-2.\]
     If $q'$ is minimal with the above property, then the highest weight in $h^1(d,e)$ is
     \[d'\omega_1+(e-2e'-2)\omega_2 = (d-e'-2,e-2e'-2,0).\]
 \end{enumerate}
\end{theorem}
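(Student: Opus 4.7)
The natural approach is to argue by strong induction on $k$ (where $q = p^k$ is determined by $tq \leq d < (t+1)q$), applying Theorem~\ref{thm:recursion-sl3} at each step. Since $h^1(d,e) \neq 0$, the first subcase of that theorem is ruled out, so only the middle or last subcase of the recursion occurs. In each case one identifies the summand of~\eqref{eq:main-rec-sl3} that carries the lex-largest dominant weight, then checks that the other two summands contribute strictly smaller leading weights, ruling out cancellation at the top.

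\emph{Case (1) ($d > e$).} When $e < tq$, the formula $h^1(d, e) = s_{(d-1, e)}$ gives the claim directly, with highest weight $(d-1, e, 0)$. When $tq \leq e \leq (t+1)q - 2$, the first summand $F^q(h_t^{\vee}) \cdot h^1(d - tq, e - tq)$ still satisfies $d - tq > e - tq \geq 0$, so by induction (case (1) for the smaller pair) its leading weight is $(tq, tq, 0) + (d - tq - 1, e - tq, 0) = (d - 1, e, 0)$. For the other two summands, the lex-top monomial of each truncated factor $h^{(q)}_j$ is read off from the explicit expansion $h^{(q)}_j = \sum_{a_1 + a_2 + a_3 = j,\ 0 \leq a_i \leq q - 1} x_1^{a_1} x_2^{a_2} x_3^{a_3}$ (which saturates the earliest coordinates with $q - 1$'s); the $h^0$-factor of the third summand is handled via the symmetry $h^0(\alpha, \beta) = h^1(\beta, \alpha)$ combined with induction. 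Both contribute strictly lex-smaller weights, so $(d - 1, e, 0)$ survives.

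\emph{Case (2) ($d \leq e$).} Here the last subcase of the recursion is forced. A base-$p$ digit analysis of the minimal $s$ (characterised by: the digits of $d$ and $e$ agree at positions $\geq s$, both are nonzero at position $s$, and neither $d$ nor $e$ has all lower digits equal to $p - 1$) splits the argument into $s < k$ versus $s = k$. If $s < k$: the common top digit $d_k = e_k$ gives $tq = d_k p^k$, and its removal preserves the parameters $(q', d', e')$ (only the top block of $m$ changes). By induction the first summand contributes $(tq, tq, 0) + (d - tq - e' - 2, e - tq - 2e' - 2, 0) = (d - e' - 2, e - 2e' - 2, 0)$, matching the formula. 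If $s = k$: then $q' = q$, $m = t$, $d' = d - tq$, $e' = e - tq$, and one checks (via the same digit analysis, or directly via Theorem~\ref{thm:main}(1) specialised to $n = 3$) that $h^1(d - tq, e - tq) = 0$, so the first summand vanishes. The lead then comes from the second summand: the truncated Schur function $s^{(q)}_{(2q + e' - 1, d')}$ has dominant highest weight $(q + d' - e' - 2, q - e' - 2, 0)$ (computed by noting that the lex-top monomial of the product $h^{(q)}_{2q+e'-1} \cdot h^{(q)}_{d'}$ strictly dominates that of $h^{(q)}_{2q+e'} \cdot h^{(q)}_{d'-1}$), and multiplication by $F^q(h_{t-1}^{\vee})$ (highest weight $((t-1)q, (t-1)q, 0)$) yields $(d - e' - 2, e - 2e' - 2, 0)$, matching the claim. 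The third summand (nontrivial only for $t \geq 2$) involves $h^0(q - d' - 2, q - e' - 2)^{\vee}$ and is handled via the symmetry, induction, and dualisation, yielding a strictly lex-smaller leading weight.

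\emph{Main obstacle.} The principal technical difficulty lies in the explicit monomial-level computation of the lex-top weights of truncated Schur functions $s^{(q)}_{(A, B)}$ and their products with the Frobenius-twisted factors $F^q(h_{t-j}^{\vee})$, together with the careful lex-order comparisons across all three summands of~\eqref{eq:main-rec-sl3} needed to rule out leading-term cancellation against the candidate highest weight. The base-$p$ digit bookkeeping that justifies preservation of $(q', d', e')$ in case (2-i), and that forces the vanishing of the first summand in case (2-ii), is combinatorial but delicate, with several boundary situations to treat separately (e.g.\ $t \in \{1, 2\}$, $e' = q - 2$, $d = tq$, or $d' = 0$).
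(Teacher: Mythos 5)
Your plan is correct and follows essentially the same route as the paper: there Theorem~\ref{thm:sl3-highest-weight-h1} is proved together with Theorem~\ref{thm:recursion-sl3} by a simultaneous induction (on $d$ rather than on $k$), and its Step~2 is precisely your argument---your case split $s<k$ versus $s=k$ coincides with the paper's dichotomy $h^1(d-tq,e-tq)\neq 0$ versus $h^1(d-tq,e-tq)=0$, with the first summand of \eqref{eq:main-rec-sl3} carrying the top weight in the former case and the second summand in the latter, and the third summand always strictly dominated. The lex-order comparisons you defer are exactly what the paper supplies via Lemma~\ref{lem:trunc-schur=schur} (rewriting $s^{(q)}_{(e-1+(2-t)q,d-tq)}$ as an ordinary Schur function, in place of your direct monomial analysis) and Lemma~\ref{lem:wts<q00} (controlling the highest weight of the dualized $h^0$ factor), together with the same base-$p$ digit bookkeeping in case (2).
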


When $d>e$, the conclusion of Theorem~\ref{thm:sl3-highest-weight-h1} agrees with the characteristic zero situation, when we have $h^1(d,e)=s_{(d-1,e)}$. We illustrate the case $d\leq e$ by comparing it to Theorem~\ref{thm:chars-d<2p}: if $p\leq d\leq e\leq 2p-2$ then
\[ h^1(d,e) = s'_{(e-1+p,d-p)} = h'_{e-1+p}\cdot h_{d-p} - h'_{e+p}\cdot h_{d-p-1}.\]
Using the identifications $h'_f = (h'_{3p-3-f})^{\vee}$ (see Section~\ref{subsec:trunc-div-sym}) and $h'_f=h_f$ for $f<p$, we get
\[ h^1(d,e) = h_{2p-2-e}^{\vee}\cdot h_{d-p} - h_{2p-3-e}^{\vee}\cdot h_{d-p-1},\]
whose highest weight is $(2p-2-e)\omega_2+(d-p)\omega_1$. With the notation in Theorem~\ref{thm:sl3-highest-weight-h1}, we have $d=d'+p$, $e=e'+p$, hence $d'=d-p$ and $e-2e'-2=e-2(e-p)-2=2p-2-e$, so the two results agree.

\medskip

\noindent {\bf A non-recursive character formula and Nim.} We write $d=(d_k\cdots d_0)_2$ for the $2$-adic expansion of a non-negative integer $d$:
\[ d= \sum_{i=0}^k d_i\cdot 2^i,\text{ with }d_i\in\{0,1\}\text{ for all }i.\]
The \defi{Nim-sum} (or \defi{bitwise xor}) $a\oplus b$ is defined by performing addition modulo $2$ to each of the digits in the $2$-adic expansion of non-negative integers $a,b$:
\[ a\oplus b = c\text{ if and only if }a_i + b_i \equiv c_i \text{ mod }2\text{ for all }i.\]
We note that the above relation can be expressed more symmetrically as $a\oplus b\oplus c=0$, and that triples $(a,b,c)$ satisfying it correspond to winning positions in the three-pile game of Nim. We define the (trivariate) \defi{Nim character functions} to be the elements $\mc{N}_m\in A=\bb{Z}[x_1,x_2,x_3]/\langle x_1x_2x_3-1\rangle$ given as
\begin{equation}\label{eq:def-nim}
 \mc{N}_m = \sum_{\substack{a+b+c=2m \\ a\oplus b\oplus c = 0}} x_1^a x_2^b x_3^c.
\end{equation}
For instance, we have $\mc{N}_2 = x_1^2x_2^2+x_1^2x_3^2+x_2^2x_3^2$,
\[\mc{N}_3 = x_1^3x_2^3+x_1^3x_3^3+x_2^3x_3^3+x_1^3x_2^2x_3+x_1^2x_2^3x_3+x_1^3x_2x_3^2+x_1x_2^3x_3^2+x_1^2x_2x_3^3+x_1x_2^2x_3^3 = \mc{N}_1\cdot\mc{N}_2,\]
and it can be shown that $\mc{N}_m$ is the character of the simple $\SL_3$-module of highest weight $m\omega_2$ (see Lemma~\ref{lem:rec-Nim}).

\begin{theorem}\label{thm:sl3-p=2-characters}
 Suppose that $n=3$, $p=2$, $k\geq 1$ and $2^k\leq d\leq e\leq 2^{k+1}-2$. Write $d=(d_k\cdots d_0)_2$, and for each $i=1,\cdots,k$ consider the left and right truncations of the binary expansion of $d$:
 \[ l_i(d) = (d_k\cdots d_{i+1})_2\quad\text{ and }\quad r_i(d) = (d_{i-1}\cdots d_0)_2.\]
 If we define $l_i(e),r_i(e)$ analogously, and set $I=\{i:1\leq i\leq k,\ d_i=e_i=1,\ l_i(d)=l_i(e),\ r_i(e)\leq 2^i-2\}$ then
 \[h^1(d,e) = \sum_{i\in I} F^{2^{i+1}}\left(\mc{N}_{l_i(d)}\right)\cdot s^{(2^i)}_{(r_i(e)-1+2^{i+1},r_i(d))}.\]
\end{theorem}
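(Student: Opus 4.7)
The plan is to induct on $k$, the top-bit position of $d$ (equivalently, on the number of $1$-bits in $d$), by unfolding the recursion of Theorem~\ref{thm:recursion-sl3}. For $p=2$, $t=1$, $q=2^k$, the middle case of that recursion reduces, via $h_0=1$ and $h_{-1}=0$, to
\[
h^1(d,e) = F^{2^k}(h_1^{\vee}) \cdot h^1(d-2^k,\, e-2^k) \;+\; s^{(2^k)}_{(e+2^k-1,\, d-2^k)}.
\]
The second summand is precisely the $i=k$ term of the claimed formula: $l_k(d)=0$ makes $F^{2^{k+1}}(\mc{N}_0)=1$, while $r_k(d)=d-2^k$ and $r_k(e)=e-2^k$; and $k\in I$ because $d_k=e_k=1$ and $r_k(e)\leq 2^k-2$ (using $e\leq 2^{k+1}-2$).

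The crux is to rewrite $F^{2^k}(h_1^{\vee}) \cdot h^1(d-2^k,\, e-2^k)$ as the sum over $i\in I$ with $i<k$. The main tool is a product structure on the Nim characters. A direct calculation in $A$ using $x_i^{-1}=\prod_{j\neq i}x_j$ gives $h_1^{\vee}=x_2x_3+x_1x_3+x_1x_2=\mc{N}_1$. I would then prove two identities: \emph{Frobenius compatibility} $F^{2^s}(\mc{N}_m)=\mc{N}_{2^s m}$, via the bijection $(a,b,c)\mapsto(2^s a, 2^s b, 2^s c)$ on Nim triples (any solution of $a'+b'+c'=2^{s+1}m$ with $a'\oplus b'\oplus c'=0$ has each coordinate divisible by $2^s$); and the \emph{product decomposition} $\mc{N}_m=\prod_{i:\,m_i=1} F^{2^i}(\mc{N}_1)$, arising from the bijection between Nim triples contributing to $\mc{N}_m$ and the data of a chosen pair of variables at each $1$-bit of $m$. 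These yield the multiplicativity $\mc{N}_a\cdot\mc{N}_b=\mc{N}_{a+b}$ whenever $a,b$ have disjoint binary supports. Applying this with $a=2^k$ and $b=2^{j+1}l_j(d-2^k)$ (supported in bit positions $j+1,\ldots,k-1$, disjoint from $\{k\}$) gives the key identity
\[
F^{2^k}(h_1^{\vee}) \cdot F^{2^{j+1}}\bigl(\mc{N}_{l_j(d-2^k)}\bigr) = F^{2^{j+1}}\bigl(\mc{N}_{l_j(d)}\bigr) \qquad (j<k),
\]
via $l_j(d)=l_j(d-2^k)+2^{k-j-1}$.

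For the induction, I split on whether $(d-2^k,\,e-2^k)$ satisfies the hypothesis of the theorem. In the generic case (i.e., $d-2^k>0$ and $e-2^k\leq 2^{k_1+1}-2$, where $k_1$ is the top bit of $d-2^k$), the induction hypothesis applies. Multiplying its formula by $F^{2^k}(h_1^{\vee})$ and invoking the key identity converts each of its terms into the corresponding $i<k$ term of the claimed sum; the bijection of index sets follows from $d_j=(d-2^k)_j$, $l_j(d)=l_j(d-2^k)+2^{k-j-1}$, $r_j(d)=r_j(d-2^k)$ (and similarly for $e$) for $j<k$, along with the observation that $d_j=1$ forces $j\leq k_1$. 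In the degenerate case ($d-2^k=0$, or $e-2^k\geq 2^{k_1+1}-1$), the recursion itself forces $h^1(d-2^k,\,e-2^k)=0$, so only the $i=k$ contribution remains, and one must check directly that $I=\{k\}$.

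The main obstacle is this last verification. For $j<k$ with $d_j=1$ (forcing $j\leq k_1$) and $l_j(d)=l_j(e)$, I split into two subcases. If $e-2^k\geq 2^{k_1+1}$, then $e$ carries a $1$-bit somewhere in the range $(k_1,k)$ while $d$ does not, contradicting $l_j(d)=l_j(e)$. If $e-2^k=2^{k_1+1}-1$, then bits $0$ through $k_1$ of $e$ are all $1$, so $r_j(e)=2^j-1>2^j-2$, ruling out $j\in I$. This case analysis, together with the combinatorial verification of the product decomposition of $\mc{N}_m$, constitutes the main technical content of the proof.
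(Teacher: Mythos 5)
Your proposal is correct and follows essentially the same route as the paper: induction on the top binary digit, using the $p=2$ specialization of Theorem~\ref{thm:recursion-sl3}, identifying the truncated-Schur summand as the $i=k$ term, and absorbing $F^{2^k}(h_1^{\vee})=F^{2^k}(s_{(1,1)})$ into the Nim multiplicities via $l_i(d)=l_i(d-2^k)+2^{k-i-1}$, exactly as the paper does with Lemma~\ref{lem:rec-Nim}. Your multiplicative identities for $\mc{N}_m$ (Frobenius compatibility and the product over $1$-bits) are a mild repackaging of that lemma's bitwise argument, and your explicit treatment of the degenerate cases (including $e-2^k=2^{k_1+1}-1$) matches, and if anything slightly sharpens, the paper's case analysis.
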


Theorem~\ref{thm:sl3-p=2-characters} is highly suggestive of a layered structure on cohomology, represented (in each of the regions II, III, V, VI) by triangles of a different color in Figure~\ref{fig:PicX n=3,p=2}. The layers are parametrized by $i\geq 1$, have size $2^i-1$ and are arranged periodically with period $2^{i+1}$. They encode characters given by truncated Schur functions (the ``building blocks"), tensored with Frobenius twists of appropriate Nim polynomials (the ``multiplicities"), where the building blocks remain constant, but the multiplicities grow with the distance from the origin. In Theorem~\ref{thm:sl3-p=2-characters} we always have $k\in I$. If we let $q=2^k$ then $l_k(d)=0$, $r_k(d)=d-q$ and $r_k(e)-1+2^{k+1}=e-1+q$, so the ``top-layer" contribution to $h^1(d,e)$ is $s^{(q)}_{(e-1+q,d-q)}$ (compare with Theorem~\ref{thm:chars-d<2p}).
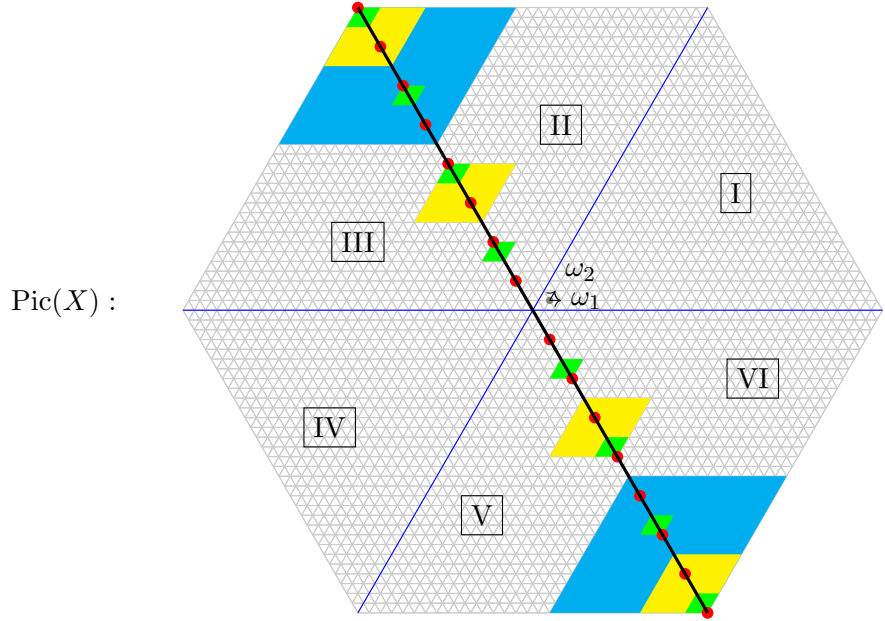
\begin{figure}[H]
\[\op{Pic}(X):\qquad
\begin{tikzpicture}[baseline=(O.base)]
\setlength\weightLength{.15cm}
\begin{rootSystem}{A}
\weightLattice{31}
\node (O) at \weight{0}{0} {};
\wt{1}{1}
\draw[->] \weight{1}{1} -- \weight{2}{1} 
    node[right]{$\omega_1$};
\draw[->] \weight{1}{1} -- \weight{1}{2} 
    node[above right]{$\omega_{2}$};
\draw[blue!100] \weight{-31}{0} -- \weight{31}{0};
\draw[blue!100] \weight{0}{-31} -- \weight{0}{31};
\fill[cyan!100] \weight{17}{-17} -- \weight{31}{-17} -- \weight{31}{-31} --
\weight{17}{-31} -- cycle;
\fill[yellow!100] \weight{9}{-9} -- \weight{15}{-9} -- \weight{15}{-15} --
\weight{9}{-15} -- cycle;
\fill[yellow!100] \weight{25}{-25} -- \weight{31}{-25} -- \weight{31}{-31} --
\weight{25}{-31} -- cycle;
\fill[green!100] \weight{5}{-5} -- \weight{7}{-5} -- \weight{7}{-7} -- \weight{5}{-7} -- cycle;
\fill[green!100] \weight{13}{-13} -- \weight{15}{-13} -- \weight{15}{-15} -- \weight{13}{-15} -- cycle;
\fill[green!100] \weight{21}{-21} -- \weight{23}{-21} -- \weight{23}{-23} -- \weight{21}{-23} -- cycle;
\fill[green!100] \weight{29}{-29} -- \weight{31}{-29} -- \weight{31}{-31} -- \weight{29}{-31} -- cycle;
\filldraw[red!100] \weight{3}{-3} circle (2pt);
\filldraw[red!100] \weight{7}{-7} circle (2pt);
\filldraw[red!100] \weight{11}{-11} circle (2pt);
\filldraw[red!100] \weight{15}{-15} circle (2pt);
\filldraw[red!100] \weight{19}{-19} circle (2pt);
\filldraw[red!100] \weight{23}{-23} circle (2pt);
\filldraw[red!100] \weight{27}{-27} circle (2pt);
\filldraw[red!100] \weight{31}{-31} circle (2pt);

\fill[cyan!100] \weight{-17}{17} -- \weight{-31}{17} -- \weight{-31}{31} --
\weight{-17}{31} -- cycle;
\fill[yellow!100] \weight{-9}{9} -- \weight{-15}{9} -- \weight{-15}{15} --
\weight{-9}{15} -- cycle;
\fill[yellow!100] \weight{-25}{25} -- \weight{-31}{25} -- \weight{-31}{31} --
\weight{-25}{31} -- cycle;
\fill[green!100] \weight{-5}{5} -- \weight{-7}{5} -- \weight{-7}{7} -- \weight{-5}{7} -- cycle;
\fill[green!100] \weight{-13}{13} -- \weight{-15}{13} -- \weight{-15}{15} -- \weight{-13}{15} -- cycle;
\fill[green!100] \weight{-21}{21} -- \weight{-23}{21} -- \weight{-23}{23} -- \weight{-21}{23} -- cycle;
\fill[green!100] \weight{-29}{29} -- \weight{-31}{29} -- \weight{-31}{31} -- \weight{-29}{31} -- cycle;
\filldraw[red!100] \weight{-3}{3} circle (2pt);
\filldraw[red!100] \weight{-7}{7} circle (2pt);
\filldraw[red!100] \weight{-11}{11} circle (2pt);
\filldraw[red!100] \weight{-15}{15} circle (2pt);
\filldraw[red!100] \weight{-19}{19} circle (2pt);
\filldraw[red!100] \weight{-23}{23} circle (2pt);
\filldraw[red!100] \weight{-27}{27} circle (2pt);
\filldraw[red!100] \weight{-31}{31} circle (2pt);

\node[draw] at \weight{12}{12} {I};
\node[draw] at \weight{-7}{19} {II};
\node[draw] at \weight{-19}{7} {III};
\node[draw] at \weight{-12}{-12} {IV};
\node[draw] at \weight{6}{-21} {V};
\node[draw] at \weight{23}{-7} {VI};
\draw[very thick] \weight{-31}{31} -- \weight{31}{-31};
\end{rootSystem}
\end{tikzpicture}
\]
\caption{The Picard group of the incidence correspondence $n=3,p=2$}
\label{fig:PicX n=3,p=2}
\end{figure}

A conjectural generalization of the character formula in Theorem~\ref{thm:sl3-p=2-characters} for $p=2$ and all $n$, based on $n$-variate Nim polynomials, is given in \cite{gao}*{Section~4.4} and is verified for $n=4$ in \cite{gao}*{Section~4.3}. The formulas there, along with Theorem~\ref{thm:chars-d<2p} and additional computational evidence obtained via Macaulay2 \cite{GS}, suggest that truncated Schur functors are the ``appropriate" building blocks for the cohomology of line bundles on the incidence correspondence for all $p$. A definitive formula however remains elusive for now.

\section{Preliminaries}\label{sec:prelims}

Throughout this paper, $p>0$ denotes a prime number, $q$ denotes a power of $p$, and $\kk$ is an algebraically closed field of characteristic~$p$. We fix a $\kk$-vector space $V$ with $\dim(V)=n$, and write $\GL=\GL(V)$ for the general linear group of invertible transformations of $V$, and $\SL=\SL(V)$ for the special linear group.

\subsection{Basic polynomial functors}\label{sec:poly-functors}

For a $\kk$-vector space $E$ of dimension $r$, and a non-negative integer $d$, we let $D^d E$ denote the \defi{$d$-th divided power} of $E$, let $S^d E=\Sym^d E$ denote the \defi{$d$-th symmetric power} of~$E$, and let $\bw^d E$ denote the \defi{$d$-th exterior power} of $E$ (see \cite{AFPRW}*{Section~3.1} for a quick review of standard notation and conventions). In addition, if $q=p^k$ then we consider the \defi{$q$-th Frobenius power} of $E$
\[F^q E = \langle e^q :e\in E\rangle_{\kk} \subset S^q E,\]
where $\langle-\rangle_{\kk}$ denotes the $\kk$-linear span. Note that the superscript in $F^q$ refers to the degree of the polynomial functor as opposed to the number of iterates of Frobenius.

The functors $D^d, S^d, \bw^d, F^q$ extend naturally if we replace $E$ with a locally free sheaf $\mc{E}$ of rank $r$ on a $\kk$-variety $Y$, and if we write $\mc{E}^{\vee} = \ShHom_{\mc{O}_Y}(\mc{E},\mc{O}_Y)$ for the dual of $\mc{E}$, then we have natural identifications
\begin{equation}\label{eq:S-D-E-dual}
\left(S^d\mc{E}\right)^{\vee} \simeq D^d\left(\mc{E}^{\vee}\right),\quad \left(\bw^d\mc{E}\right)^{\vee}\simeq \bw^d(\mc{E}^{\vee}),\quad .
\end{equation}
that we will use freely throughout this article. We will also write
\[ \det(\mc{E})=\bw^r\mc{E},\text{ and }\Sym(\mc{E}) = \bigoplus_{d\geq 0} S^d\mc{E}\]
for the \defi{determinant} of $\mc{E}$ and the \defi{symmetric algebra} of $\mc{E}$ respectively.

If $\varphi:Y\lra Y$ denotes the absolute Frobenius morphism on $Y$, then $F^p\mc{E}=\varphi^*\mc{E}$, and $F^q=\left(\varphi^k\right)^*$. In particular, $F^q$ commutes with tensor operations:
\begin{equation}\label{eq:Fq-tensor-ops} 
F^q(\mc{E}^{\oo d}) = (F^q\mc{E})^{\oo d},\ F^q(S^d\mc{E})=S^d(F^q\mc{E}),\ F^q(D^d\mc{E})=D^d(F^q\mc{E}),\ \left(F^q\mc{E}\right)^{\vee} \simeq F^q\left(\mc{E}^{\vee}\right),
\end{equation}
and for every exact complex $\mc{E}_{\bullet}$ of locally free sheaves, we have that $F^q\mc{E}_{\bullet}$ is also exact. If we dualize the inclusion $F^q\left(\mc{E}^{\vee}\right)\hookrightarrow S^q\left(\mc{E}^{\vee}\right)$ and apply the identifications in \eqref{eq:S-D-E-dual}, \eqref{eq:Fq-tensor-ops}, then we get a natural quotient map
\begin{equation}\label{eq:D-onto-F}
    D^q\mc{E} \onto F^q\mc{E}.
\end{equation} 

\subsection{Truncated divided and symmetric powers}\label{subsec:trunc-div-sym}

We recall some basic properties of truncations defined via Frobenius -- for more information, the reader may consult \cite{doty-walker}, \cite{sun}*{Section~3}. Suppose that $\mc{E}$ is a locally free sheaf of rank $r$, and let $q=p^k$ for some $k\geq 1$. We define the \defi{$q$-truncated divided powers} of $\mc{E}$ via
\begin{equation}\label{eq:q-truncated-divided-E}
 T_qD^d\mc{E} = \ker\left(D^d\mc{E} \lra D^{d-q}\mc{E} \oo F^q\mc{E}\right),
\end{equation}
obtained from the natural inclusion $D^d\mc{E}\hookrightarrow D^{d-q}\mc{E}\oo D^q\mc{E}$ followed by the quotient map induced by \eqref{eq:D-onto-F}. Similarly, we define the \defi{$q$-truncated symmetric powers} of $\mc{E}$ via
\begin{equation}\label{eq:q-truncated-symmetric-E}
 T_qS^d\mc{E} = \coker\left(S^{d-q}\mc{E} \oo F^q\mc{E} \lra S^d\mc{E}\right).
\end{equation}

When $q=p$, we have isomorphisms
\[ T_pD^d\mc{E} \simeq T_pS^d\mc{E}\text{ for all }d\geq 0,\]
and $T_pD^dV = T_pS^d V$ is an irreducible $\SL$- (or $\GL$-) representation.

When $d<q$, we have $T_qD^d\mc{E}=D^d\mc{E}$ and $T_qS^d\mc{E}=S^d\mc{E}$. Moreover, if we let $N_q(\mc{E})=r(q-1)$ then we have
\[T_qD^d\mc{E}=0=T_qS^d\mc{E}\text{ for }d>N_q(\mc{E})\quad\text{ and }\quad T_qD^{N_q(\mc{E})}\mc{E} \simeq T_qS^{N_q(\mc{E})}\mc{E} = \det(\mc{E})^{q-1}.\]
Moreover, we have a perfect pairing
\[ T_qS^d\mc{E} \oo T_qS^{N_q(\mc{E})-d}\mc{E} \lra T_qS^{N_q(\mc{E})}\mc{E},\]
which induces a natural isomorphism
\begin{equation}\label{eq:duality-TqS-TqD}
T_qS^d\mc{E} \simeq T_qD^{N_q(\mc{E})-d}\left(\mc{E}^{\vee}\right)\oo \det(\mc{E})^{q-1}.
\end{equation} 
Since the sheaf of graded algebras $\Sym\mc{E}$ is locally free over the subalgebra $\Sym(F^q\mc{E})$ (a sheaf version of the familiar fact that a polynomial ring $A[x_1,\cdots,x_r]$ is free over $A[x_1^q,\cdots,x_r^q]$), and the quotient
\[ \Sym\mc{E} / \langle F^q\mc{E}\rangle = T_q\Sym\mc{E} = \bigoplus_{d\geq 0}T_qS^d\mc{E},\]
it follows by the graded Nakayama's lemma (after restricting to degree $d$) that $\Sym^d\mc{E}$ admits a filtration
\[ \Sym^d\mc{E} = \mc{E}^0 \supseteq \mc{E}^1 \supseteq \cdots\]
where $\mc{E}^i/\mc{E}^{i-1} \simeq F^q(S^i\mc{E}) \oo T_q S^{d-iq}\mc{E}$. Dually, we have a filtration
\begin{equation}\label{eq:filtration-DdE}
\mc{E}_0 \subseteq \mc{E}_1 \subseteq \cdots \subseteq D^d\mc{E}
\end{equation}
with $\mc{E}_i/\mc{E}_{i-1}\simeq F^q(D^i\mc{E}) \oo T_qD^{d-iq}\mc{E}$.

\subsection{Castelnuovo--Mumford regularity}\label{subsec:CM-reg}

The reader may consult \cite{lazarsfeld-pos-I}*{Section~I.8} and \cite{eis-geom-syz}*{Chapter~4} for more details on Castelnuovo--Mumford regularity. We recall the definition \eqref{eq:def-CM-reg} and make the convention that the regularity of the $0$ sheaf is $-\infty$. Regularity behaves well with respect to tensor products,
\begin{equation}\label{eq:reg-tensor}
 \reg(\mc{E}\oo\mc{F}) \leq \reg(\mc{E})+\reg(\mc{F}),
\end{equation}
and with respect to summands,
\begin{equation}\label{eq:reg-summands}
 \reg(\mc{E}\oplus\mc{F}) = \max\{\reg(\mc{E}),\reg(\mc{F})\},
\end{equation}
but less so with respect to other multilinear operations (see Section~\ref{sec:divided-taut-sub} for an interesting example).

We have that $\reg(\mc{O}_{\PP}(-a))=a$ for all $a\in\bb{Z}$, and that any $m$-regular sheaf $\mc{F}$ admits a resolution
\[\cdots \lra \bigoplus \mc{O}_{\PP}(-m-i) \lra \cdots \lra \bigoplus \mc{O}_{\PP}(-m-1) \lra \bigoplus \mc{O}_{\PP}(-m) \lra \mc{F} \lra 0\]
by direct sum of line bundles. Conversely, if $\mc{F}$ admits a resolution $\mc{F}_{\bullet}$ where each $\mc{F}_i$ is $(m+i)$-regular, then $\mc{F}$ is $m$-regular. We can restate this somewhat more generally as follows.

\begin{lemma}\label{lem:reg-from-resolution}
 Suppose that $\mc{F}_{\bullet}$ is a resolution of $\mc{F}$, where $\mc{F}_i$ is $m_i$-regular for all $i$. If we let
 \[m = \max\{m_i-i:i\geq 0\},\] 
 then $\mc{F}$ is $m$-regular.
\end{lemma}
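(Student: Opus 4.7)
The plan is to break the resolution into short exact sequences and propagate the regularity hypothesis through the associated long exact sequences of cohomology, terminating the induction using Grothendieck vanishing on $\PP$. Concretely, I would set $K_{-1} = \mc{F}$ and $K_i = \ker(\mc{F}_i \to \mc{F}_{i-1})$ (with the convention $\mc{F}_{-1} = \mc{F}$), which yields a short exact sequence
\[0 \lra K_i \lra \mc{F}_i \lra K_{i-1} \lra 0\]
for each $i \geq 0$. The goal is then to establish $H^j(\PP, \mc{F}(m-j)) = 0$ for each $j > 0$.

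The key observation is monotonicity of regularity: since $m_i \leq m + i$ by the definition of $m$, the sheaf $\mc{F}_i$ is $(m+i)$-regular, and consequently
\[H^{j+i}(\PP, \mc{F}_i(m-j)) = H^{j+i}\bigl(\PP, \mc{F}_i((m+i)-(j+i))\bigr) = 0\]
for every $i \geq 0$ (using $j + i > 0$). Twisting the short exact sequences above by $\mc{O}_\PP(m-j)$ and feeding the vanishing into the resulting long exact cohomology sequences produces injections
\[H^{j+i}(\PP, K_{i-1}(m-j)) \hookrightarrow H^{j+i+1}(\PP, K_i(m-j))\]
for every $i \geq 0$. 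Composing these starting from $i = 0$ (where $K_{-1} = \mc{F}$) yields an injection $H^j(\PP, \mc{F}(m-j)) \hookrightarrow H^{j+i+1}(\PP, K_i(m-j))$ for every $i \geq 0$.

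To close the argument, I would invoke Grothendieck vanishing: for $i$ large enough that $j + i + 1 > \dim \PP = n-1$, the right-hand group vanishes, whence $H^j(\PP, \mc{F}(m-j)) = 0$. Since $j > 0$ was arbitrary, $\mc{F}$ is $m$-regular. There is no real obstacle here; the only thing to be mindful of is that the argument works uniformly whether $\mc{F}_\bullet$ is finite or infinite, because the termination is supplied by the bounded cohomological dimension of $\PP$ rather than by the length of the resolution.
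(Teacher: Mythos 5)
Your proof is correct and follows essentially the same route as the paper: the crucial step in both is the observation that $m_i \leq m+i$, so each $\mc{F}_i$ is $(m+i)$-regular. The only difference is that the paper then invokes the standard fact that a resolution whose $i$-th term is $(m+i)$-regular forces $m$-regularity of $\mc{F}$, whereas you prove that fact directly via the short exact sequences $0\to K_i\to\mc{F}_i\to K_{i-1}\to 0$ and the dimension-shifting chain of injections terminated by Grothendieck vanishing on $\PP$ --- which is precisely the classical argument underlying the cited statement, and your execution of it (including the case of a finite resolution) is sound.
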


\begin{proof} From the definition of $m$, it follows that $m+i\geq m_i$ for all $i$. Since $\mc{F}_i$ is $m_i$-regular, it is then also $(m+i)$-regular, hence $\mc{F}$ is $m$-regular by the previous paragraph.
\end{proof}


\subsection{Projective bundles, flag varieties, higher direct images \cite{BCRV}*{Sections~9.1--9.3}}\label{subsec:bundles}

If $\mc{E}$ is a locally free sheaf on a variety $Y$, we let $\bb{P}_Y(\mc{E}) = \ul{\Proj}_Y(\Sym\mc{E})$ denote the \defi{projective bundle} associated to $\mc{E}$ as in \cite{hartshorne}*{Section~II.7}. When $Y=\op{Spec}(\kk)$ and $\mc{E}=E$ is a $\kk$-vector space, we obtain the projective space $\bb{P}E$, parametrizing one-dimensional quotients of $E$ (note that this differs from the dual convention often used in the literature where $\bb{P}E$ is defined as $\op{Proj}(\Sym(E^{\vee}))$). If $\pi:\bb{P}_Y(\mc{E})\lra Y$ denotes the structure morphism, then we have a tautological exact sequence
\begin{equation}\label{eq:gen-taut-ses} 0 \lra \mc{R}_{\mc{E}} \lra \pi^*\mc{E} \lra \mc{O}_{\bb{P}_Y(\mc{E})}(1) \lra 0,
\end{equation}
where $\mc{R}_{\mc{E}}$ is the \defi{universal subsheaf}, isomorphic to $\Omega_{\pi}(1)$, the twist by $\mc{O}_{\bb{P}_Y(\mc{E})}(1)$ of the relative cotangent sheaf of the morphism~$\pi$ (that is, \eqref{eq:gen-taut-ses} is obtained from the relative Euler sequence by tensoring with $\mc{O}_{\bb{P}_Y(\mc{E})}(1)$).

We will be mainly interested in the case when $Y=\bb{P}V=\PP$, with tautological short exact sequence \eqref{eq:taut-ses-PV}, and $\mc{E}=\mc{R}^{\vee}$. In that case $X=\bb{P}_{\PP}(\mc{R}^{\vee})$ is the incidence correspondence from the introduction: this is because that \eqref{eq:taut-ses-PV} induces a surjection $V^{\vee}\oo\mc{O}_{\PP}\onto \mc{R}^{\vee}$, which in turn gives rise to a closed immersion
\[ X=\bb{P}_{\PP}(\mc{R}^{\vee}) \hookrightarrow \bb{P}_{\PP}(V^{\vee}\oo\mc{O}_{\PP}) \simeq \PP \times \bb{P}V^{\vee} = \bb{P}V\times\bb{P}V^{\vee},\]
providing the concrete realization \eqref{eq:def-X} for $X$. If we write $\pi:X\lra\PP$ for the structure morphism then we can reconcile the notation $\mc{O}_X(a,b)$ in the introduction with that of \eqref{eq:gen-taut-ses} via
\[ \mc{O}_X(1,0) = \pi^*(\mc{O}_{\PP}(1))\quad\text{ and }\quad\mc{O}_X(0,1) = \mc{O}_X(1).\]
By the adjunction formula, the canonical line bundle on $X$ is given by
\begin{equation}\label{eq:omega-X}
\omega_X = \mc{O}_X(-n+1,-n+1).
\end{equation}
Using $\det(\mc{R}) \simeq \det(V) \oo \mc{O}_{\PP}(-1)\simeq \mc{O}_{\PP}(-1)$, it follows for instance from \cite{hartshorne}*{Ex.~III.8.3,~III.8.4} that for $d\geq 0$, $e\in\bb{Z}$ we have
\[ D^d\mc{R}(e) \simeq R^{n-2}\pi_*(\mc{O}_X(e+1,-d-n+1)),\quad\text{ and }\quad R^j\pi_*(\mc{O}_X(e+1,-d-n+1))=0\text{ for }j\neq n-2.\]
By the Leray spectral sequence, this gives an isomorphism
\begin{equation}\label{eq:cohom-P-vs-incvar} H^i(\PP,D^d\mc{R}(e)) \simeq H^{i+n-2}(X,\mc{O}_X(e+1,-d-n+1)).
\end{equation}
The reader may check that the above isomorphism is $\SL$-equivariant, and in order to make it $\GL$-equivariant one needs to tensor the left side $H^i(\PP,D^d\mc{R}(e))$ with $\det(V)=\bw^n V$. The same argument shows that for $a\in\bb{Z}$ and $-n+1\leq b\leq -1$, one has
\[ R^j\pi_*(\mc{O}_X(a,b)) = 0\text{ for all $j$, hence }H^i(X,\mc{O}_X(a,b))=0\text{ for all }i,\]
and by interchanging the roles of $V$ and $V^{\vee}$, the same vanishing holds when $-n+1\leq a\leq -1$ and $b\in\bb{Z}$.


\subsection{Symmetric powers of the tautological subsheaf}\label{subsec:sym-pows-R}

In Section~\ref{sec:divided-taut-sub} we will discuss the problem of computing the regularity of divided powers of the sheaf $\mc{R}$ on the projective space $\PP$. Here we show that the Kempf--Weyman geometric technique gives a simple, characteristic-free answer (likely well-known to experts), in the case of symmetric powers.

\begin{theorem}\label{thm:sym-pows-R}
 For $a\geq 0$ there exists a resolution $\mc{F}_{\bullet}$ of $\Sym^a\mc{R}$ with
 \[ \mc{F}_i = \bb{S}_{(a,a,1^i)}V \oo \mc{O}_{\PP}(-a-i)\quad\text{ for }\quad i=0,\cdots,n-2.\]
 As a consequence, we have $H^1(\PP,\Sym^a\mc{R}(a-2))=\bb{S}_{(a-1,a-1)}V$ for $a\geq 1$, and $\reg(\Sym^a\mc{R})=a$.
\end{theorem}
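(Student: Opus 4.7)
The plan is to realize $\Sym^a\mc{R}$ as a pushforward from a partial flag variety and then apply the Kempf--Weyman geometric technique with an auxiliary Koszul resolution. First I would set $Z = \bb{F}(n-2, n-1; V) \simeq \bb{P}_\PP(\mc{R})$, the variety of flags $H' \subset H \subset V$ with $\dim H' = n-2$ and $\dim H = n-1$, and let $\pi : Z \to \PP$ be the projection $(H', H) \mapsto H$. By the projective bundle formalism of Section~\ref{subsec:bundles}, $\pi_*\mc{O}_Z(a) = \Sym^a\mc{R}$ with no higher direct images for $a \geq 0$, so it suffices to resolve $\mc{O}_Z(a)$ by sheaves whose pushforwards along $\pi$ are accessible.

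Next I would embed $Z \hookrightarrow W := \PP \times \op{Gr}(n-2, V)$ as the incidence variety $\{(H, H') : H' \subset H\}$, which is the zero locus of a regular section of the rank-$(n-2)$ bundle $\mc{R}_{\op{Gr}}^\vee \oo \mc{O}_\PP(1)$ encoding the condition $\mc{R}_{\op{Gr}} \subset \mc{R}_\PP$. The corresponding Koszul complex resolves $\mc{O}_Z$ on $W$ with $i$-th term $\bw^i\mc{R}_{\op{Gr}} \oo \mc{O}_\PP(-i)$. Identifying $\mc{O}_Z(1) = \det\mc{R}_\PP \oo (\det\mc{R}_{\op{Gr}})^{-1}|_Z$ and extending this line bundle to $W$, I would twist the Koszul complex by $\mc{O}_Z(a)$ and push forward along $r_1 : W \to \PP$. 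Kempf's vanishing theorem on $\op{Gr}(n-2, V)$, applied to the dominant weights arising from the identification $\bw^i\mc{R}_{\op{Gr}} \simeq \bw^{n-2-i}\mc{R}_{\op{Gr}}^\vee \oo \det\mc{R}_{\op{Gr}}$, annihilates all higher direct images and leaves an honest resolution of $\Sym^a\mc{R}$ on $\PP$. The main obstacle will be verifying in a uniformly characteristic-free way that the resulting global sections satisfy
\[H^0\bigl(\op{Gr}(n-2, V), \bw^i\mc{R}_{\op{Gr}} \oo (\det\mc{R}_{\op{Gr}})^{-a}\bigr) = \bb{S}_{(a, a, 1^i)} V;\]
this should follow from the standard description of global sections of Schur-type bundles on Grassmannians as dual Weyl modules, together with careful bookkeeping of the $\det V$-factors intrinsic to $\det\mc{R}$ and $\det\mc{R}_{\op{Gr}}$.

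For the consequences, Lemma~\ref{lem:reg-from-resolution} applied to the resolution yields $\reg(\Sym^a\mc{R}) \leq \max_i((a+i) - i) = a$, since each $\mc{F}_i$ is $(a+i)$-regular. To compute $H^1(\PP, \Sym^a\mc{R}(a-2))$ for $a \geq 1$, I would twist the resolution by $\mc{O}_\PP(a-2)$ and examine the hypercohomology spectral sequence: the twisted terms $\bb{S}_{(a, a, 1^i)} V \oo \mc{O}_\PP(-i-2)$ have vanishing cohomology unless $i = n-2$, where $H^{n-1}(\mc{O}_\PP(-n)) \simeq \det V^\vee$ contributes. This single surviving $E_1$-term forces the spectral sequence to degenerate and produces
\[H^1(\PP, \Sym^a\mc{R}(a-2)) \simeq \bb{S}_{(a, a, 1^{n-2})} V \oo \det V^\vee \simeq \bb{S}_{(a-1, a-1)} V,\]
where the last step removes the column $(1^n)$ from the partition. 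Non-vanishing of this group for $a \geq 1$ forces $\reg(\Sym^a\mc{R}) \geq a$, so $\reg(\Sym^a\mc{R}) = a$.
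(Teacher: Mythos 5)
Your proposal is correct, and while it lives in the same Kempf--Weyman circle of ideas as the paper's argument, the implementation is genuinely different. The paper applies the basic theorem of the geometric technique (\cite{weyman}*{Theorem~5.1.2}) directly over $\PP$: it resolves the graded module of sections of $\Sym^a\mc{R}(a)$ over $\Sym(V)$, and computes the required input $H^j(\PP,\bw^{i+j}\mc{R}\oo\Sym^a\mc{R}(a))$ via a Pieri-type short exact sequence relating $\bw^{i+j}\mc{R}\oo\Sym^a\mc{R}$ to $\bb{S}_{(a,1^{i+j})}\mc{R}$ and $\bb{S}_{(a+1,1^{i+j-1})}\mc{R}$, together with Kempf-type (non)vanishing on $\PP$ itself. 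You instead realize $\Sym^a\mc{R}$ as $\pi_*\mc{O}_Z(a)$ for $Z=\bb{P}_{\PP}(\mc{R})\subset \PP\times\op{Gr}(n-2,V)$, resolve $\mc{O}_Z$ by the Koszul complex of the regular section of $\mc{R}_{\op{Gr}}^{\vee}\oo\mc{O}_{\PP}(1)$ (the codimension count $n-2$ does make the section regular), twist by your extension of $\mc{O}_Z(a)$, and push forward to $\PP$. What your route buys is that the only representation-theoretic input is the cohomology of $\bw^i\mc{R}_{\op{Gr}}\oo(\det\mc{R}_{\op{Gr}})^{-a}$ on the Grassmannian, which for $a\geq 1$ is the bundle induced from a dual Weyl module of the Levi with highest weight dominant for $\GL(V)$; its cohomology therefore agrees with that of a dominant line bundle on the flag bundle, so Kempf's theorem gives both the higher vanishing (hence, after the standard spectral-sequence or d\'evissage step, an honest resolution of $\pi_*\mc{O}_Z(a)=\Sym^a\mc{R}$) and the characteristic-free identification of the sections. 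Your derivation of the consequences --- the bound $\reg(\Sym^a\mc{R})\leq a$ via Lemma~\ref{lem:reg-from-resolution} and the single surviving hypercohomology term $H^{n-1}(\PP,\mc{O}_{\PP}(-n))\oo\bb{S}_{(a,a,1^{n-2})}V=\bb{S}_{(a-1,a-1)}V$ --- is identical to the paper's.

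Two small points to settle when writing this up. First, the displayed formula on the Grassmannian holds only up to a determinant twist: $\GL(V)$-equivariantly one gets $\bb{S}_{(a,a,1^i)}V\oo(\det V)^{-a}$, and the compensating factor $(\det V)^{a}$ comes from $(\det\mc{R})^{a}\simeq(\det V)^{a}\oo\mc{O}_{\PP}(-a)$ in your chosen extension of $\mc{O}_Z(a)$, so the final terms are exactly $\mc{F}_i=\bb{S}_{(a,a,1^i)}V\oo\mc{O}_{\PP}(-a-i)$ (as $\SL$-representations there is nothing to track); this is precisely the bookkeeping you flagged. Second, for $a=0$ the weights $(0,0,1^i,0^{n-2-i})$ with $i\geq 1$ are not dominant, so Kempf does not apply as stated; either dispose of $a=0$ separately (since $\Sym^0\mc{R}=\mc{O}_{\PP}$ is trivially resolved and $0$-regular), or observe that these weights are singular with respect to the simple root $\epsilon_2-\epsilon_3$, so all their cohomology vanishes in every characteristic --- the same vanishing the paper invokes through \cite{BCRV}*{Lemma~9.3.4}.
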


\begin{proof}
 We write $S=\Sym(V)$ for the homogeneous coordinate ring of $\PP$, and consider the graded $S$-module
 \[ M = \bigoplus_{d\geq 0} H^0(\PP,\Sym^a\mc{R}(d+a)),\]
 where the summand indexed by $d$ lies in degree $d$. The sheaf associated to $M$ is $\mc{M}=\Sym^a\mc{R}(a)$. By \cite{weyman}*{Theorem~5.1.2}, $M$ admits a free resolution $F_{\bullet}$ where
\[ F_i = \bigoplus_{j\geq 0} H^j\left(\PP,\bw^{i+j}\mc{R}\oo\mc{M}\right) \oo S(-i-j).\]
Using the short exact sequence
\[ 0 \lra \bb{S}_{(a,1^{i+j})}\mc{R} \lra \bw^{i+j}\mc{R} \oo \Sym^a\mc{R} \lra \bb{S}_{(a+1,1^{i+j-1})}\mc{R} \lra 0,\]
which after twisting by $\mc{O}_{\PP}(a)$ leads to a short exact sequence 
\[ 0 \lra \bb{S}_{(a,1^{i+j})}\mc{R}(a) \lra \bw^{i+j}\mc{R} \oo \mc{M} \lra \bb{S}_{(a+1,1^{i+j-1})}\mc{R}(a) \lra 0.\]
Using for instance \cite{BCRV}*{Corollary~9.8.6}, it follows that $\bb{S}_{(a,1^{i+j})}\mc{R}(a)$ has vanishing higher cohomology, and 
\[ H^0\left(\PP,\bb{S}_{(a,1^{i+j})}\mc{R}(a)\right) = \bb{S}_{(a,a,1^{i+j})}V.\]
Applying \cite{BCRV}*{Theorems~9.4.1 and~9.8.5, Lemma~9.3.4}, we get that the cohomology of $\bb{S}_{(a+1,1^{i+j-1})}\mc{R}(a)$ is identically zero. This implies that
\[ F_i = \bb{S}_{(a,a,1^i)}V \oo S(-i) \text{ for }i \geq 0,\]
and since $\dim(V)=n$, the only non-zero terms occur when $i\leq n-2$. Sheafifying the resolution of $M$ and twisting by $\mc{O}_{\PP}(-a)$ yields the desired resolution of $\Sym^a\mc{R}$.

For the final assertions, we suppose that $a\geq 1$ and use the hypercohomology spectral sequence
\[ E_1^{-i,j} = H^j(\PP,\mc{F}_i(a-2)) \Longrightarrow \bb{H}^{j-i}(\PP,\mc{F}_{\bullet}(a-2)) = H^{j-i}(\PP,\Sym^a\mc{R}(a-2)).\]
The only non-vanishing term with $i<j$ on the $E_1$ page occurs for $i=n-2$ and $j=n-1$, and we have
\[ E_1^{-n+2,n-1} = H^{n-1}(\PP,\mc{O}_{\PP}(-n)) \oo \bb{S}_{(a,a,1^{n-2})}V = \det(V)^{\vee} \oo \bb{S}_{(a,a,1^{n-2})}V = \bb{S}_{(a-1,a-1)}V,\]
which implies the desired description for $H^1(\PP,\Sym^a\mc{R}(a-2))$. Moreover, since $H^1(\PP,\Sym^a\mc{R}(a-2))\neq 0$, we conclude $\reg(\Sym^a\mc{R})\geq a$, while the existence of the resolution $\mc{F}_{\bullet}$ combined with Lemma~\ref{lem:reg-from-resolution} yields $\reg(\Sym^a\mc{R})\leq a$. This shows that $\reg(\Sym^a\mc{R})=a$ when $a\geq 1$. For $a=0$, $\Sym^a\mc{R}=\mc{O}_{\PP}$ has regularity $0$, concluding the proof.
\end{proof}

\section{Divided powers of the tautological subsheaf}\label{sec:divided-taut-sub}

Throughout this section, $V$ is a vector space of dimension $n$, and $\PP = \bb{P}V \simeq \PP^{n-1}$ is the projective space parametrizing $1$-dimensional quotients of $V$ (or equivalently, $1$-dimensional subspaces of $V^{\vee}$). We consider the tautological sequence \eqref{eq:taut-ses-PV}, where $\mc{Q}=\mc{O}_{\PP}(1)$ is the tautological line bundle on $\PP$, and $\mc{R}\simeq\Omega^1_{\PP}(1)$ is the tautological subsheaf, with $\rk(\mc{R})=n-1$. The goal of this section is to prove the following.

\begin{theorem}\label{thm:reg-Dd-R}
 If $n\geq 3$, $q=p^k$ for some $k\geq 0$, and $t\cdot q\leq d<(t+1)\cdot q$ for some $1\leq t<p$, then
 \[\reg\left(D^d\mc{R}\right) = (t+n-2)\cdot q - n+2.\]
\end{theorem}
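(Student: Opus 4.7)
The plan is to induct on $k$. The base case $k=0$ (so $d=t<p$) follows from Theorem~\ref{thm:sym-pows-R}, since $D^d\mc{R}\simeq\Sym^d\mc{R}$ has regularity $t=(t+n-2)\cdot 1-n+2$. For the inductive step, apply the filtration \eqref{eq:filtration-DdE} to $D^d\mc{R}$ with Frobenius $F^q$, producing successive quotients $F^q(D^i\mc{R})\oo T_qD^{d-iq}\mc{R}$ for $i=0,1,\ldots,t$. Both the upper bound $\reg(D^d\mc{R})\leq(t+n-2)q-n+2$ and the matching non-vanishing will be read off from the hypercohomology spectral sequence of the top ($i=t$) quotient, with the lower quotients only contributing to regularity estimates.

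Since $t<p$, the top quotient is $F^q(\Sym^t\mc{R})\oo D^r\mc{R}$ with $r=d-tq<q$. Applying the exact functor $F^q$ to the length-$(n-2)$ resolution of $\Sym^t\mc{R}$ from Theorem~\ref{thm:sym-pows-R} and tensoring with $D^r\mc{R}$ yields a resolution whose hypercohomology spectral sequence is $E_1^{-i,j}=\bb{S}_{(t,t,1^i)}V\oo H^j(\PP,D^r\mc{R}(e-qt-qi))$, converging to $H^{j-i}$ of the top quotient twisted by $\mc{O}(e)$. The key observation is that $H^j(\PP,D^r\mc{R}(\cdot))$ is concentrated in $j\in\{0,1,n-1\}$, via \eqref{eq:cohom-P-vs-incvar} and the vanishing of line-bundle cohomology on $X$ in middle degrees. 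For the upper bound, the only $E_1$-entries that can obstruct $((t+n-2)q-n+2)$-regularity occur at positions $(0,1)$ and $(n-2,n-1)$; at twist $e=(t+n-2)q-n+1$, the first vanishes by the inductive hypothesis $\reg(D^r\mc{R})\leq(n-2)q-n+2$ (valid for $r<q$), and the second reduces via Serre duality and the two-term resolution $0\to\Sym^{r-1}V^\vee\oo\mc{O}(-1)\to\Sym^rV^\vee\oo\mc{O}\to\Sym^r\mc{R}^\vee\to 0$ to the vanishing $H^0(\PP,\Sym^r\mc{R}^\vee(-n+1))=0$; the higher $H^k$ conditions reduce similarly and are strictly weaker. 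For the lower quotients ($i<t$), use the duality \eqref{eq:duality-TqS-TqD} to identify $T_qD^{d-iq}\mc{R}$, up to a twist by $\mc{O}(1-q)$, with $T_qS^{s'}\mc{R}^\vee$ where $s'=(n-1)(q-1)-(d-iq)$; this is a quotient of the $0$-regular sheaf $\Sym^{s'}\mc{R}^\vee$, fitting into a Koszul-type resolution from the regular sequence generating $\langle F^q\mc{R}^\vee\rangle\subset\Sym\mc{R}^\vee$, with $j$-th term $\bw^jF^q\mc{R}^\vee\oo\Sym^{s'-jq}\mc{R}^\vee$. Combining with the resolution of $F^q(\Sym^i\mc{R})$ and Lemma~\ref{lem:reg-from-resolution} should yield the required regularity bound.

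For the lower bound $\reg(D^d\mc{R})\geq(t+n-2)q-n+2$, the aim is to show $H^1(\PP,D^d\mc{R}((t+n-2)q-n))\neq 0$. At the twist $e=(t+n-2)q-n$, the $E_1$-entry at position $(n-2,n-1)$ of the spectral sequence for the top quotient is $\bb{S}_{(t,t,1^{n-2})}V\oo H^{n-1}(\PP,D^r\mc{R}(-n))$, and a direct computation via the short exact sequence $0\to D^r\mc{R}\to D^rV\oo\mc{O}\to D^{r-1}V\oo\mc{O}(1)\to 0$ gives $H^{n-1}(\PP,D^r\mc{R}(-n))=D^rV$, so the entry is non-zero for $t\geq 1$. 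Since the complex is concentrated in degrees $-(n-2)\leq p\leq 0$, no differentials can target or issue from this position (their targets/sources lie outside the valid range), so $E_\infty^{-(n-2),n-1}$ equals $E_1^{-(n-2),n-1}$ and $H^1$ of the top quotient twisted by $\mc{O}(e)$ is non-zero. The upper-bound estimates applied to the sub-filtration $\mc{R}_{t-1}\subset D^d\mc{R}$ yield $H^2(\mc{R}_{t-1}(e))=0$, and the long exact sequence associated to $0\to\mc{R}_{t-1}\to D^d\mc{R}\to F^q\Sym^t\mc{R}\oo D^r\mc{R}\to 0$ lifts the non-vanishing to $H^1(D^d\mc{R}(e))\neq 0$; the case $r=0$ recovers Theorem~\ref{thm:corner-cohomology}.

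The main obstacle I anticipate is controlling the lower quotients $F^q(D^i\mc{R})\oo T_qD^{d-iq}\mc{R}$ for $i<t$: the naive tensor inequality $\reg(\mc{E}\oo\mc{F})\leq\reg\mc{E}+\reg\mc{F}$ is too weak by amounts growing with $q$, so the argument must exploit both the cohomological concentration of twists of $T_qS^{s'}\mc{R}^\vee$ in degrees $\{0,1,n-1\}$ and the Koszul-type resolutions above, likely requiring a strengthened inductive hypothesis that simultaneously tracks the regularity of $D^d\mc{R}$ and of the relevant truncated or Frobenius-twisted sheaves.
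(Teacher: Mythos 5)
Your overall architecture coincides with the paper's: induct on $q$, use the Frobenius filtration \eqref{eq:filtration-DdE}, handle the top factor $F^q(D^t\mc{R})\oo D^{d-tq}\mc{R}$ exactly, and get non-vanishing from $H^{n-1}(\PP,D^r\mc{R}(-n))\neq 0$ together with an $H^2$-vanishing for the sub-object; your analysis of the top quotient and of the surviving entry $E_1^{-(n-2),n-1}$ is essentially sound. However, the proof is incomplete at precisely the point you flag yourself: you never establish $H^1\bigl(\PP,F^q(D^a\mc{R})\oo T_qD^{d-aq}\mc{R}(e-1)\bigr)=0$ for $0\leq a\leq t-1$ at the twist $e-1=(t+n-2)q-n+1$, and as you correctly note, \eqref{eq:reg-tensor} cannot do it: with $\reg(F^q(D^a\mc{R}))=q(n+a-2)-n+2$ and $\reg(T_qD^b\mc{R})\leq(n-1)(q-1)-n+2$, the sum overshoots the target regularity $(t+n-2)q-n+2$ by $q(n+a+1-t)-2n+3$, which for $a=t-1$ equals $q(n-2)-2n+3$ and grows with $q$. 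The same unproven control is silently reused in your non-vanishing step, where ``the upper-bound estimates applied to the sub-filtration'' are invoked to get $H^2(\mc{E}_{t-1}(e))=0$ at $e=(t+n-2)q-n$; that, too, is out of reach of the naive tensor bound for large $q$. Since these lower factors are exactly where the difficulty of the theorem lies, this is a genuine gap, not a routine verification, and the ``strengthened inductive hypothesis'' you hope for is not actually what is needed.

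What closes the gap in the paper is an asymmetric use of the Koszul resolution, Proposition~\ref{prop:reg-FqDa-oo-TqDb}: via \eqref{eq:iso-TqD-TqS-R} and \eqref{eq:K-res-TqSc} one resolves $F^q(D^a\mc{R})\oo T_qD^b\mc{R}$ by a complex whose zeroth term is $F^q(D^a\mc{R})\oo S^c(\mc{R}^\vee)(-q+1)$; because $S^c(\mc{R}^\vee)$ is $0$-regular (Lemma~\ref{lem:Sd-Rvee-0-reg}), the $H^1$ of this single term already vanishes once $e\geq\reg(F^q(D^a\mc{R}))+q-2$, while for all higher Koszul terms one needs only $H^{\geq 2}$-vanishing, which Lemma~\ref{lem:vanishing-FqDr-tensor-G} supplies under the much weaker condition $e\geq\reg(\mc{G})$ --- no addition of regularities occurs there, since $F^q(D^a\mc{R})$ twisted by any line bundle of degree $\geq -n+1$ has no cohomology above $H^1$ (Lemma~\ref{lem:coh2-higher-DdR}). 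The same Lemma~\ref{lem:vanishing-FqDr-tensor-G}, combined with Corollary~\ref{cor:reg-Tq-Db-R}, gives the $H^2$-vanishing you need for the non-vanishing step. Two further slips in your write-up are minor by comparison: Serre duality identifies $H^{n-1}(\PP,D^r\mc{R}(-n+1))$ with $H^0(\PP,S^r(\mc{R}^\vee)(-1))^\vee$, not with a twist by $-n+1$; and for $n\geq 4$ the differentials leaving $E_1^{-(n-2),n-1}$ land at positions that are inside the allowed column range and vanish only because of your concentration statement in degrees $\{0,1,n-1\}$, not because they fall outside the range.
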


\noindent Notice that Theorem~\ref{thm:reg-Dd-R} gives the second part of Theorem~\ref{thm:FV-to-PV} (equation \eqref{eq:reg-DdR}), while the first part (equation \eqref{eq:Hj-X-vs-P}) was explained in Section~\ref{subsec:bundles}. Using the general results from Section~\ref{subsec:bundles} we will also explain how to derive Theorems~\ref{thm:main} and~\ref{thm:corner-cohomology}.

We begin with some basic observations regarding the statement of Theorem~\ref{thm:reg-Dd-R}.
\begin{enumerate}
\item For every positive integer $d$ there exists a unique pair $(t,k)$ with $1\leq t<p$, $k\geq 0$, such that if we let $q=p^k$ then $t\cdot q\leq d<(t+1)\cdot q$. 
\item One has that $t\cdot q$ is the leading term in the base $p$ expansion of $d$, and Theorem~\ref{thm:reg-Dd-R} asserts that $\reg(D^d\mc{R})$ only depends on this term.
\item When $d<p$, the formula in Theorem~\ref{thm:reg-Dd-R} reduces to $\reg(D^d\mc{R})=d$, which follows from the isomorphism $D^d\mc{R}\simeq\Sym^d\mc{R}$ and Theorem~\ref{thm:sym-pows-R}.
\item Over a field of characteristic zero, the isomorphism $D^d\mc{R}\simeq\Sym^d\mc{R}$ holds for all $d\geq 0$, hence $\reg(D^d\mc{R})=d$, in contrast to the positive characteristic situation.
\item The hypothesis $n\geq 3$ is necessary for the conclusion of Theorem~\ref{thm:reg-Dd-R}: if $n=2$ then $\mc{R}\simeq\mc{O}_{\PP}(-1)$ and therefore $D^d\mc{R}\simeq\mc{O}_{\PP}(-d)$ and $\reg(D^d\mc{R})=d$ again.
\end{enumerate}

Our strategy for proving Theorem~\ref{thm:reg-Dd-R} is to study cohomological aspects of the filtration of $D^d\mc{R}$ from Section~\ref{subsec:trunc-div-sym}, whose composition factors are
\[ F^q(D^a\mc{R}) \oo T_qD^{d-aq}\mc{R},\quad a=0,\cdots,t.\]
We will show that for $e$ sufficiently large, the only composition factor of $D^d\mc{R}(e)$ that can carry higher cohomology is $(F^q(D^t\mc{R}) \oo T_qD^{d-tq}\mc{R})(e)$, and detect precisely when cohomology vanishing occurs for this factor. We begin by establishing a series of preliminary results.

\begin{lemma}\label{lem:coh2-higher-DdR}
 If $q=p^k$ for some $k\geq 0$, then for all $a\geq 0$, $j\geq 2$, and $e\geq -n+1$, we have
 \[ H^j\left(\PP,F^q(D^a\mc{R})\oo \mc{O}_{\PP}(e)\right) = 0.\]
\end{lemma}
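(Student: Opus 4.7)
The plan is to reduce the vanishing to standard line bundle cohomology on $\PP\simeq\PP^{n-1}$ via a short exact sequence expressing $F^q(D^a\mc{R})$ as the kernel of a map between trivial bundles twisted by line bundles.

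First I would apply the exact functor $F^q$ to the tautological sequence \eqref{eq:taut-ses-PV}, producing the short exact sequence
\[ 0 \lra F^q\mc{R} \lra V\oo\mc{O}_{\PP} \lra \mc{O}_{\PP}(q) \lra 0,\]
after identifying $F^q(V\oo\mc{O}_{\PP})\simeq V\oo\mc{O}_{\PP}$ and $F^q\mc{O}_{\PP}(1)\simeq\mc{O}_{\PP}(q)$ via \eqref{eq:Fq-tensor-ops}. Since the quotient $\mc{O}_{\PP}(q)$ is a line bundle, applying $D^a$ together with the divided power comultiplication yields a short exact sequence
\[ 0 \lra D^a(F^q\mc{R}) \lra D^aV\oo\mc{O}_{\PP} \lra D^{a-1}V\oo\mc{O}_{\PP}(q) \lra 0,\]
whose right-hand map is the composition of the coproduct $D^a\mc{B}\to D^{a-1}\mc{B}\oo\mc{B}$ (with $\mc{B}=V\oo\mc{O}_{\PP}$) and the projection $\mc{B}\onto\mc{O}_{\PP}(q)$. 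I would verify exactness locally: after choosing a splitting $\mc{B}\simeq\mc{A}\oplus\mc{L}$ of any short exact sequence $0\to\mc{A}\to\mc{B}\to\mc{L}\to 0$ with $\mc{L}$ a line bundle, one has $D^a\mc{B}=\bigoplus_{j=0}^{a}D^{a-j}\mc{A}\oo\mc{L}^{\oo j}$, and the coproduct-plus-projection is an isomorphism onto $D^{a-j}\mc{A}\oo\mc{L}^{\oo j}$ for each $j\geq 1$, with kernel the $j=0$ summand $D^a\mc{A}$. Using the identification $D^a(F^q\mc{R})=F^q(D^a\mc{R})$ from \eqref{eq:Fq-tensor-ops} completes the derivation.

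Twisting the displayed short exact sequence by $\mc{O}_{\PP}(e)$ and passing to the long exact sequence in cohomology gives, for each $j\geq 2$,
\[ H^{j-1}(\PP,\mc{O}_{\PP}(e+q))\oo D^{a-1}V \lra H^j(\PP,F^q(D^a\mc{R})(e)) \lra H^j(\PP,\mc{O}_{\PP}(e))\oo D^aV.\]
Only the range $2\leq j\leq n-1$ is non-trivial on $\PP^{n-1}$. The hypothesis $e\geq -n+1$ ensures both $e$ and $e+q$ strictly exceed $-n$, so by the standard computation of cohomology of line bundles on $\PP^{n-1}$ one has $H^i(\PP,\mc{O}_{\PP}(c))=0$ for every $i\in[1,n-1]$ whenever $c\geq -n+1$. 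Hence both flanking terms vanish, forcing $H^j(\PP,F^q(D^a\mc{R})(e))=0$. The base case $a=0$, where $F^q(D^0\mc{R})=\mc{O}_{\PP}$, follows directly from the same line bundle vanishing. The only genuinely non-trivial point is the verification of the divided-power short exact sequence used above, which is elementary but requires carefully unwinding the combinatorics of the divided power coproduct.
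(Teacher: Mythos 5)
Your proof is correct and follows essentially the same route as the paper: it rests on the short exact sequence $0\to F^q(D^a\mc{R})\to D^aV\oo\mc{O}_{\PP}\to D^{a-1}V\oo\mc{O}_{\PP}(q)\to 0$ and the vanishing of $H^j(\PP,\mc{O}_{\PP}(c))$ for $j\geq 1$, $c\geq -n+1$. The only cosmetic difference is that the paper obtains this sequence by applying the exact functor $F^q$ to the divided-power sequence \eqref{eq:ses-DdR}, whereas you apply $F^q$ to the tautological sequence first and then form the divided-power sequence, verifying its exactness locally.
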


\begin{proof} The tautological sequence \eqref{eq:taut-ses-PV} induces for each $a\geq 0$ a short exact sequence
\begin{equation}\label{eq:ses-DdR}
 0\lra D^a\mc{R} \lra D^aV \oo \mc{O}_{\PP} \lra D^{a-1}V\oo\mc{O}_{\PP}(1) \lra 0.
\end{equation}
If we apply $F^q$ to \eqref{eq:ses-DdR} and tensor with $\mc{O}_{\PP}(e)$, then we get a short exact sequence
\[0 \lra F^q(D^a\mc{R}) \oo \mc{O}_{\PP}(e) \lra F^q(D^aV) \oo \mc{O}_{\PP}(e) \lra F^q(D^{a-1}V)\oo\mc{O}_{\PP}(q+e) \lra 0.\]
Considering the induced long exact sequence in cohomology, the desired conclusion follows from the fact that 
\[H^j\left(\PP,\mc{O}_{\PP}(e)\right) = 0\]
for all $j\geq 1$ and $e\geq -n+1$.
\end{proof}

Using the sequence \eqref{eq:ses-DdR} with $q=1$ and $a=d\geq 1$, we find that
\[ H^1(\PP,D^d\mc{R}(-1)) = D^{d-1}V \neq 0,\]
and in particular $\reg(D^d\mc{R})>0$. It follows from the definition of regularity and Lemma~\ref{lem:coh2-higher-DdR} that for $d\geq 1$
\begin{equation}\label{eq:reg-DdR-H1} \reg(D^d\mc{R}) = \max\{m : H^1(\PP,D^d\mc{R}(m-2))\neq 0\}.
\end{equation}
The next observation is that the regularity of $D^d\mc{R}$ is non-decreasing as a function of $d$.

\begin{lemma}\label{lem:reg-Dr-increases}
 We have $\reg(D^d\mc{R})\geq\reg(D^{d-1}\mc{R})$ for all $d\geq 1$. 
\end{lemma}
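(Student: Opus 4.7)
The plan is to reduce the statement to an algebraic claim about the cokernels of certain ``divergence'' maps, then verify that claim by an intertwining-square argument.

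First, by \eqref{eq:reg-DdR-H1} it suffices to show that whenever $H^1(\PP,D^{d-1}\mc{R}(m-2))\neq 0$ one also has $H^1(\PP,D^d\mc{R}(m-2))\neq 0$ (the degenerate case $d=1$ is handled separately: $\reg(\mc{O}_{\PP})=0$, while $H^1(\PP,\mc{R}(-1))\neq 0$ follows from \eqref{eq:ses-DdR} with $a=1$). Twisting \eqref{eq:ses-DdR} by $\mc{O}_{\PP}(m-2)$ and taking the long exact sequence, together with the vanishing $H^1(\PP,\mc{O}_{\PP}(e))=0$ for all $e$ (valid for $n\geq 3$), the relevant cohomology group is identified with the cokernel of the ``divergence'' map
\[\phi_a:D^a V\oo\Sym^{m-2}V\lra D^{a-1}V\oo\Sym^{m-1}V,\qquad\alpha\oo f\longmapsto\sum_{l=1}^n\partial_l\alpha\oo e_l f,\]
where $\{e_1,\ldots,e_n\}$ is a basis of $V$ and $\partial_1,\ldots,\partial_n$ denote the associated divided-power derivations on $D^{\bullet}V$.

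The key step is to show that surjectivity of $\phi_d$ forces surjectivity of $\phi_{d-1}$. For each index $i$, the divided-power derivation $\partial_i\oo 1$ intertwines the two maps, producing a commutative square
\[
\begin{CD}
D^d V\oo\Sym^{m-2}V @>{\phi_d}>> D^{d-1}V\oo\Sym^{m-1}V\\
@V{\partial_i\oo 1}VV @VV{\partial_i\oo 1}V\\
D^{d-1}V\oo\Sym^{m-2}V @>{\phi_{d-1}}>> D^{d-2}V\oo\Sym^{m-1}V
\end{CD}
\]
whose commutativity follows from the identity $\partial_i\partial_l=\partial_l\partial_i$ on $D^{\bullet}V$. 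Assuming $\phi_d$ is surjective, any element $\gamma\oo g\in D^{d-1}V\oo\Sym^{m-1}V$ may be written as $\phi_d(\xi)$, and applying $\partial_i\oo 1$ yields
\[\partial_i\gamma\oo g\;=\;\phi_{d-1}\bigl((\partial_i\oo 1)(\xi)\bigr)\;\in\;\op{im}(\phi_{d-1}).\]
Since $\partial_i:D^{d-1}V\to D^{d-2}V$ is itself surjective (any divided-power monomial $\prod_j e_j^{(a_j)}$ of total degree $d-2$ equals $\partial_1$ of $e_1^{(a_1+1)}\prod_{j\geq 2}e_j^{(a_j)}$), these elements exhaust all of $D^{d-2}V\oo\Sym^{m-1}V$, proving that $\phi_{d-1}$ is surjective.

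I do not foresee any serious obstacle: the entire argument rests on the commutativity of divided-power partial derivations and their individual surjectivity in each variable. The principal virtue of working with $\partial_i$, rather than, say, multiplication by elements of $V$, is that the intertwining is automatic and avoids dividing by integers that might vanish in characteristic $p$.
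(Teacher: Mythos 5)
Your proof is correct, but it takes a genuinely different route from the paper. Both arguments start from the same reduction \eqref{eq:reg-DdR-H1}, but the paper then twists the long exact complex \eqref{eq:exact-div-wedge} (running from $D^{d+n-1}\mc{R}$ down to $D^{d-1}\mc{R}\oo\bw^nV$) by $\mc{O}_{\PP}(m-2)$, invokes the vanishing of $H^{\geq 2}$ from Lemma~\ref{lem:coh2-higher-DdR}, and uses the hypercohomology spectral sequence to conclude that the induced map $H^1(\PP,D^d\mc{R}(m-2))\oo\bw^{n-1}V\lra H^1(\PP,D^{d-1}\mc{R}(m-2))\oo\bw^nV$ is surjective, so non-vanishing propagates upward in $d$. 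You instead stay entirely on the level of global sections: using \eqref{eq:ses-DdR} and $H^1(\PP,\mc{O}_{\PP}(e))=0$ (for $n\geq 3$) you identify both $H^1$ groups as cokernels of the equivariant divergence maps $\phi_a\colon D^aV\oo\Sym^{m-2}V\to D^{a-1}V\oo\Sym^{m-1}V$, and then show that surjectivity of $\phi_d$ forces surjectivity of $\phi_{d-1}$ via the commuting, coefficient-free divided-power contractions $\partial_i$, each of which is surjective in any characteristic. Your checks are the right ones: the square commutes because the $\partial_i$ commute, the identification of the boundary map on sections with the divergence map is the standard one, and the degenerate cases ($d=1$, and $m=1$ where $\Sym^{m-2}V=0$) cause no trouble. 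What your approach buys is elementarity and explicitness -- no spectral sequence, no appeal to Lemma~\ref{lem:coh2-higher-DdR}, just multilinear algebra of divided powers -- and it makes transparent exactly why divided (rather than symmetric) powers behave well in characteristic $p$. What the paper's approach buys is uniformity: the same hypercohomology technique is reused throughout Section~\ref{sec:divided-taut-sub}, and it never requires writing down the maps on global sections explicitly.
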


\begin{proof}
Since $\reg(\mc{R})=1\geq \reg(\mc{O}_{\PP})=0$, we may assume that $d\geq 2$. The tautological sequence \eqref{eq:taut-ses-PV} gives rise to an exact complex
\begin{equation}\label{eq:exact-div-wedge} 0 \lra D^{d+n-1}\mc{R} \lra D^{d+n-2}\mc{R} \oo V \lra \cdots \lra D^d\mc{R} \oo \bw^{n-1}V \lra D^{d-1}\mc{R} \oo \bw^n V \lra 0.
\end{equation}
We let $m=\reg(D^{d-1}\mc{R})$ and let $e=m-2\geq -n+1$. If we twist \eqref{eq:exact-div-wedge} by $\mc{O}_{\PP}(e)$ then we know by Lemma~\ref{lem:coh2-higher-DdR} that each term can only have non-vanishing cohomology in $H^0$ and $H^1$. Applying the hypercohomology spectral sequence to \eqref{eq:exact-div-wedge} (or using a simple induction based on the fact that for any short exact sequence 
\[ 0 \lra \mc{A} \lra \mc{B} \lra \mc{C} \lra 0,\]
if $\mc{A}, \mc{B}$ have vanishing cohomology beyond $H^0,H^1$, then so does $\mc{C}$) we conclude that the induced map in cohomology
\[ H^1\left(\bb{P}V,  D^d\mc{R} \oo \mc{O}_{\PP}(e) \oo \bw^{n-1}V\right) \lra H^1\left(\bb{P}V,  D^{d-1}\mc{R} \oo\mc{O}_{\PP}(e) \oo \bw^n V\right)\]
is surjective. In particular, since $H^1(\bb{P}V,D^{d-1}\mc{R} \oo \mc{O}_{\PP}(e)) \neq 0$ we get $H^1(\bb{P}V,D^{d}\mc{R} \oo \mc{O}_{\PP}(e)) \neq 0$, so $\reg(D^d\mc{R})\geq m$.
\end{proof}

\begin{lemma}\label{lem:reg-Fq-Da-R}
 If $1\leq a<p$ and $q=p^k$ for some $k\geq 0$, then we have
 \[\reg(F^q(D^a\mc{R})) = q\cdot(n+a-2)-n+2.\]
 Moreover, we have
 \[ H^1\left(\PP,F^q(D^a\mc{R})\oo\mc{O}_{\PP}(q\cdot(n+a-2)-n) \right) = F^q(\bb{S}_{(a-1,a-1)}V).\]
\end{lemma}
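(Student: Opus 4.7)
The plan is to pull back Theorem~\ref{thm:sym-pows-R} through the Frobenius functor $F^q$ and then extract the regularity and the explicit $H^1$ from a hypercohomology spectral sequence.

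Since $1 \leq a < p$, one has $D^a\mc{R} = \Sym^a\mc{R}$. Because $F^q$ preserves exactness of complexes of locally free sheaves (Section~\ref{sec:poly-functors}), applying it to the resolution in Theorem~\ref{thm:sym-pows-R} yields a resolution of $F^q(D^a\mc{R})$ with terms
\[\widetilde{\mc{F}}_i \;=\; F^q(\bb{S}_{(a,a,1^i)}V) \otimes \mc{O}_\PP(-q(a+i)), \qquad 0 \leq i \leq n-2.\]
Each $\widetilde{\mc{F}}_i$ is $q(a+i)$-regular, and $i \mapsto q(a+i) - i = qa + (q-1)i$ attains its maximum at $i = n-2$, so Lemma~\ref{lem:reg-from-resolution} gives the upper bound
\[\reg(F^q(D^a\mc{R})) \;\leq\; q(n+a-2) - (n-2) \;=\; q(n+a-2) - n + 2.\]

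For the matching lower bound and the explicit formula for $H^1$, I set $e = q(n+a-2) - n$ and study the hypercohomology spectral sequence
\[E_1^{-i, j} \;=\; H^j(\PP, \widetilde{\mc{F}}_i(e)) \;\Longrightarrow\; H^{j-i}(\PP, F^q(D^a\mc{R})(e)).\]
The line bundle appearing in $\widetilde{\mc{F}}_i(e)$ is $\mc{O}_\PP(q(n-2-i) - n)$. Only $H^0$ and $H^{n-1}$ of line bundles on $\PP^{n-1}$ can be nonzero, and $H^{n-1} \neq 0$ requires the twist to be $\leq -n$, i.e.\ $i \geq n-2$. Hence the unique entry with $j > 0$ occurs at $(i, j) = (n-2, n-1)$, yielding
\[E_1^{-(n-2), n-1} \;=\; F^q(\bb{S}_{(a,a,1^{n-2})}V) \otimes H^{n-1}(\PP, \mc{O}_\PP(-n)) \;=\; F^q(\bb{S}_{(a,a,1^{n-2})}V) \otimes \det(V)^\vee.\]
The outgoing $d_1$ lands in $H^{n-1}(\PP, \widetilde{\mc{F}}_{n-3}(e))$, which involves $H^{n-1}(\PP, \mc{O}_\PP(q-n))$; this vanishes since $q \geq 1$ forces $q-n > -n$. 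No differential enters from beyond the resolution, and all higher $d_r$ vanish for index reasons, so this entry survives to $E_\infty$ and computes $H^1(\PP, F^q(D^a\mc{R})(e))$.

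To identify this with the stated formula I use $\bb{S}_{(a,a,1^{n-2})}V \simeq \bb{S}_{(a-1,a-1)}V \otimes \det(V)$ and $F^q(\det V) = (\det V)^q$, obtaining $H^1 \simeq F^q(\bb{S}_{(a-1,a-1)}V) \otimes \det(V)^{q-1}$, which coincides with $F^q(\bb{S}_{(a-1,a-1)}V)$ as an $\SL$-representation (cf.\ the $\SL$-equivariance convention after \eqref{eq:cohom-P-vs-incvar}). In particular this group is nonzero; combined with Lemma~\ref{lem:coh2-higher-DdR} and the characterization of regularity in \eqref{eq:reg-DdR-H1} (which applies verbatim to $F^q(D^a\mc{R})$ since Lemma~\ref{lem:coh2-higher-DdR} supplies the required higher vanishing), this forces $\reg(F^q(D^a\mc{R})) \geq e + 2 = q(n+a-2) - n + 2$, matching the upper bound. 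The principal subtlety is reconciling the $\GL$- and $\SL$-equivariant descriptions via the harmless $\det(V)^{q-1}$ twist.
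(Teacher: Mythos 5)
Your proposal is correct and follows essentially the same route as the paper: apply the exact functor $F^q$ to the resolution from Theorem~\ref{thm:sym-pows-R}, get the upper bound from Lemma~\ref{lem:reg-from-resolution}, and extract the nonzero $H^1$ at twist $q(n+a-2)-n$ from the hypercohomology spectral sequence, whose only $E_1$-entry with $j>i$ sits at $(i,j)=(n-2,n-1)$. Your extra care with the surviving differentials and with the $\det(V)^{q-1}$ twist (so that the identification with $F^q(\bb{S}_{(a-1,a-1)}V)$ is as $\SL$-modules) is a welcome, if minor, refinement of the paper's argument.
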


\begin{proof} 
The assumption $a<p$ implies that $D^a\mc{R}\simeq S^a\mc{R}$. Since the Frobenius power functor $F^q$ is exact, it follows from Theorem~\ref{thm:sym-pows-R} that $F^q(D^a\mc{R})$ admits a resolution $\tilde{\mc{F}}_{\bullet}$ with
\begin{equation}\label{eq:tilde-F-res-FqDaR} \tilde{\mc{F}}_i = F^q(\bb{S}_{(a,a,1^i)}V) \oo \mc{O}_{\PP}(-q(a+i))\quad\text{ for }\quad i=0,\cdots,n-2.
\end{equation}
Applying Lemma~\ref{lem:reg-from-resolution}, we conclude that $F^q(D^a\mc{R})$ is $m$-regular, where
\[ m = \max\{q(a+i)-i:i=0,\cdots,n-2\} = q\cdot(n+a-2)-n+2.\]
To show that $\reg(F^q(D^a\mc{R}))=m$, it suffices to prove that $H^1(\PP,F^q(D^a\mc{R})(m-2))\neq 0$, which we compute exactly as in the proof of Theorem~\ref{thm:sym-pows-R}. We have a hypercohomology spectral sequence
\[ E_1^{-i,j} = H^j(\PP,\tilde{\mc{F}}_i(m-2)) \Longrightarrow \bb{H}^{j-i}(\PP,\tilde{\mc{F}}_{\bullet}(m-2)) = H^{j-i}(\PP,F^q(D^a\mc{R})(m-2)).\]
Since $m-2-q(a+i)\geq -n$ for $i=0,\cdots,n-2$, with equality if and only if $i=n-2$, it follows that the only non-vanishing term with $i<j$ on the $E_1$ page occurs for $i=n-2$ and $j=n-1$, and we have
\[ H^1(\PP,F^q(D^a\mc{R})(m-2)) = E_1^{-n+2,n-1} = F^q(\bb{S}_{(a-1,a-1)}V),\]
as desired.
\end{proof}

\begin{lemma}\label{lem:vanishing-FqDr-tensor-G}
 With notation as in Lemma~\ref{lem:coh2-higher-DdR}, if $\mc{G}$ is an $m$-regular sheaf on $\PP$ and $e\geq m$, then
 \[ H^j\left(\PP,F^q(D^a\mc{R}) \oo \mc{G}(e)\right) = 0\text{ for all }j\geq 2.\]
\end{lemma}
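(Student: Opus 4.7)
The plan is to reduce to Lemma~\ref{lem:coh2-higher-DdR} by resolving $\mc{G}$ by direct sums of twists of $\mc{O}_{\PP}$. Since $\mc{G}$ is $m$-regular, Mumford's regularity theory produces a locally free resolution $\cdots\to\mc{F}_1\to\mc{F}_0\to\mc{G}\to 0$ in which each $\mc{F}_i$ is a direct sum of copies of $\mc{O}_{\PP}(-m-i)$, and the syzygies $K_i$ (defined by $K_0=\mc{G}$ and $K_{i+1}=\ker(\mc{F}_i\to K_i)$) are $(m+i)$-regular. Because $F^q(D^a\mc{R})$ is locally free, tensoring each short exact sequence $0\to K_{i+1}\to\mc{F}_i\to K_i\to 0$ with $F^q(D^a\mc{R})(e)$ yields an exact sequence of sheaves.

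For a fixed $j\geq 2$, I would then extract from the associated long exact sequences in cohomology a chain of maps
\[
H^j(\PP,F^q(D^a\mc{R})\oo\mc{G}(e))\to H^{j+1}(\PP,F^q(D^a\mc{R})\oo K_1(e))\to\cdots\to H^n(\PP,F^q(D^a\mc{R})\oo K_{n-j}(e))=0,
\]
where the last group vanishes because $\PP$ has cohomological dimension $n-1$. The $i$-th arrow in this chain (for $i=0,\ldots,n-j-1$) is injective provided the preceding term $H^{j+i}(\PP,F^q(D^a\mc{R})\oo\mc{F}_i(e))$ vanishes, since that group sits immediately before the relevant connecting homomorphism. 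Composing the resulting injections will then force $H^j(\PP,F^q(D^a\mc{R})\oo\mc{G}(e))$ to inject into $0$.

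The final step is to verify the required vanishings via Lemma~\ref{lem:coh2-higher-DdR}: the term $H^{j+i}(\PP,F^q(D^a\mc{R})\oo\mc{F}_i(e))$ is a direct sum of copies of $H^{j+i}(\PP,F^q(D^a\mc{R})(e-m-i))$, and the lemma requires $j+i\geq 2$ (automatic from $j\geq 2$) together with $e-m-i\geq -n+1$. In our chain we have $i\leq n-j-1\leq n-3$, so combined with $e\geq m$ this gives $e-m-i\geq -(n-3)\geq -n+1$, and the lemma applies at every step. I do not anticipate a serious obstacle; the only conceptual check is that the hypothesis $e\geq m$ provides exactly the margin needed to absorb the shifts $-m-i$ coming from the Mumford resolution while staying in the range covered by Lemma~\ref{lem:coh2-higher-DdR}.
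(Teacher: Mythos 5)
Your proposal is correct and follows essentially the same route as the paper: both resolve $\mc{G}$ by direct sums of $\mc{O}_{\PP}(-m-i)$ using $m$-regularity, tensor with the locally free sheaf $F^q(D^a\mc{R})(e)$, and reduce each required vanishing to Lemma~\ref{lem:coh2-higher-DdR} via the inequality $e-m-i\geq -n+1$. The only difference is bookkeeping: the paper invokes the hypercohomology spectral sequence for the whole resolution, while you unroll it into short exact sequences with syzygy sheaves and chase injections up to $H^n(\PP,-)=0$, which is an equivalent dimension-shifting argument.
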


\begin{proof} Consider a resolution $\mc{G}_{\bullet}$ of $\mc{G}$, with $\mc{G}_i = \bigoplus \mc{O}_{\PP}(-m-i)$, and let $\mc{F}_{\bullet}$ denote the induced resolution of $F^q(D^a\mc{R}) \oo \mc{G}(e)$, with
\[\mc{F}_i = F^q(D^a\mc{R}) \oo \mc{G}_i(e)\text{ for }i\geq 0.\]
If $e\geq m$, then it follows from Lemma~\ref{lem:coh2-higher-DdR} that
\[ H^j(\PP,\mc{F}_i)=0\text{ for }j\geq 2\text{ and }0\leq i\leq n-1.\]
The desired conclusion follows by applying again the hypercohomology spectral sequence.
\end{proof}

To prove vanishing for $H^1$ we will need some stronger hypotheses, as follows.

\begin{lemma}\label{lem:H1vanishing-FqDr-tensor-G}
 If $0\leq a<p$, and $\mc{G}$, $e$ are such that
 \[ H^1(\PP,\mc{G}(e-qa)) = H^2(\PP,\mc{G}(e-q(a+1)) = \cdots = H^{n-1}(\PP,\mc{G}(e-q(a+n-2)) = 0,\]
 then we have
 \[ H^1(\PP,F^q(D^a\mc{R}) \oo \mc{G}(e)) = 0.\]
\end{lemma}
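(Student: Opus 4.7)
The plan is to mirror the strategy of Lemma~\ref{lem:reg-Fq-Da-R}: first apply $F^q$ to the locally free resolution of $\Sym^a\mc{R}$ from Theorem~\ref{thm:sym-pows-R}, then tensor with $\mc{G}(e)$, and finally read off the desired vanishing from the $E_1$ page of the hypercohomology spectral sequence, but now tracking the diagonal $j-i=1$ rather than the extremal corner that appeared in Lemma~\ref{lem:reg-Fq-Da-R}.

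First, since $0\leq a<p$ we have $D^a\mc{R}\simeq\Sym^a\mc{R}$, and applying the exact functor $F^q$ to the locally free resolution $\mc{F}_\bullet$ of $\Sym^a\mc{R}$ from Theorem~\ref{thm:sym-pows-R} produces the locally free resolution $\tilde{\mc{F}}_\bullet$ of $F^q(D^a\mc{R})$ displayed in \eqref{eq:tilde-F-res-FqDaR}. Because each $\tilde{\mc{F}}_i$ is locally free (hence flat), tensoring with $\mc{G}(e)$ preserves exactness, so $\tilde{\mc{F}}_\bullet\oo\mc{G}(e)$ is a resolution of $F^q(D^a\mc{R})\oo\mc{G}(e)$ whose $i$-th term is
\[\tilde{\mc{F}}_i\oo\mc{G}(e) \simeq F^q\left(\bb{S}_{(a,a,1^i)}V\right)\oo\mc{G}(e-q(a+i)),\qquad i=0,\ldots,n-2.\]

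Next, I would invoke the hypercohomology spectral sequence
\[E_1^{-i,j} = F^q\left(\bb{S}_{(a,a,1^i)}V\right)\oo H^j\left(\PP,\mc{G}(e-q(a+i))\right) \Longrightarrow H^{j-i}\left(\PP,F^q(D^a\mc{R})\oo\mc{G}(e)\right),\]
pulling the constant Schur factor out of cohomology. To conclude that the $H^1$ abutment vanishes it suffices to check that $E_1^{-i,i+1}=0$ for every $0\leq i\leq n-2$, because then all subquotients along the diagonal $j-i=1$ on the $E_\infty$ page vanish.

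Finally, the condition $E_1^{-i,i+1}=0$ is equivalent to $H^{i+1}(\PP,\mc{G}(e-q(a+i)))=0$, and as $i$ runs from $0$ to $n-2$ this yields exactly the chain of vanishing hypotheses in the statement. No serious obstacle is anticipated here; the cleanness of the argument relies on the fact that Theorem~\ref{thm:sym-pows-R} provides a resolution of length precisely $n-2$, which matches the number of vanishing hypotheses prescribed.
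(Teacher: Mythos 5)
Your proposal is correct and follows essentially the same route as the paper: tensor the Frobenius-twisted resolution \eqref{eq:tilde-F-res-FqDaR} with $\mc{G}(e)$ and kill the diagonal $E_1^{-i,i+1}$ terms of the hypercohomology spectral sequence, which is exactly the chain of hypotheses. The only cosmetic difference is that the paper disposes of $a=0$ separately (since \eqref{eq:tilde-F-res-FqDaR} is set up for $a\geq 1$), where the statement is immediate because $F^q(D^0\mc{R})\oo\mc{G}(e)=\mc{G}(e)$.
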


\begin{proof}
 If $a=0$ then $F^q(D^a\mc{R}) \oo \mc{G}(e)=\mc{G}(e-qa)$, so the desired vanishing is part of the hypothesis. We assume that $a\geq 1$ and have using \eqref{eq:tilde-F-res-FqDaR} a resolution $\tilde{\mc{F}}_{\bullet}\oo\mc{G}(e)$ of $F^q(D^a\mc{R})\oo\mc{G}(e)$. Applying the hypercohomology spectral sequence
 \[ E_1^{-i,j} = H^j(\PP,\tilde{\mc{F}}_{i}\oo\mc{G}(e)) \Longrightarrow \bb{H}^{j-i}(\PP,\tilde{\mc{F}}_{\bullet}\oo\mc{G}(e)) = H^{j-i}(\PP,F^q(D^a\mc{R})\oo\mc{G}(e)),\]
 it suffices to prove that $E_1^{-i,i+1}=0$ for $i=0,\cdots,n-2$. This is equivalent to $H^{i+1}(\PP,\mc{G}(e-q(a+i))=0$ for $i=0,\cdots,n-2$, which is our hypothesis. 
\end{proof}

For the next results we consider the dual of the tautological sequence \eqref{eq:taut-ses-PV}
\begin{equation}\label{eq:dual-taut-ses-PV}
0 \lra \mc{O}_{\PP}(-1) \lra V^{\vee}\oo\mc{O}_{\PP} \lra \mc{R}^{\vee} \lra 0,
\end{equation}
and analyze regularity properties of several functors applied to $\mc{R}^{\vee}$.

\begin{lemma}\label{lem:Sd-Rvee-0-reg}
  For all $d\geq 0$, we have that $S^d(\mc{R}^{\vee})$ is $0$-regular.
\end{lemma}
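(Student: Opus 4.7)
The strategy is to produce a two-term resolution of $S^d(\mc{R}^\vee)$ directly from the dual tautological sequence \eqref{eq:dual-taut-ses-PV}, and then read off the required vanishing from the long exact sequence in cohomology.

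First, I would note that \eqref{eq:dual-taut-ses-PV} realizes $\mc{R}^\vee$ as the quotient of the trivial bundle $V^\vee \otimes \mc{O}_\PP$ by the line subbundle $\mc{O}_\PP(-1)$. Since a local generator of $\mc{O}_\PP(-1)$ is a linear, and hence non-zero-divisor, element of the polynomial sheaf $\Sym(V^\vee)\otimes\mc{O}_\PP$, the degree-$d$ piece of the ideal it generates provides a short exact sequence
\[
0 \lra S^{d-1}V^\vee \otimes \mc{O}_\PP(-1) \lra S^d V^\vee \otimes \mc{O}_\PP \lra S^d(\mc{R}^\vee) \lra 0.
\]
(For $d=0$ the claim is trivial since $S^0(\mc{R}^\vee)=\mc{O}_\PP$.)

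Next, for each $i\geq 1$, I would twist this sequence by $\mc{O}_\PP(-i)$ and extract from the resulting long exact sequence that $H^i(\PP, S^d(\mc{R}^\vee)(-i))$ is sandwiched between $H^i(\PP,\mc{O}_\PP(-i))\otimes S^dV^\vee$ and $H^{i+1}(\PP,\mc{O}_\PP(-i-1))\otimes S^{d-1}V^\vee$. Both outer groups vanish by the standard cohomology of line bundles on $\PP^{n-1}$: intermediate cohomology $H^j(\PP,\mc{O}(k))$ vanishes for $0<j<n-1$ and all $k$, which handles all cases with $1\leq i\leq n-2$, while the top case $i=n-1$ reduces to $H^{n-1}(\PP,\mc{O}(-n+1))=0$, which follows from Serre duality since its dual is $H^0(\PP,\mc{O}(-1))=0$. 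This yields the required $0$-regularity.

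The argument is essentially an immediate corollary of the short exact sequence, so there is no serious obstacle; the only mild subtlety is the boundary case $i=n-1$, where one must observe that the twist $-n+1$ lies just at the edge of the vanishing range for $H^{n-1}$ on $\PP^{n-1}$.
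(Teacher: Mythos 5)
Your proof is correct and follows essentially the same route as the paper: both arguments start from the short exact sequence $0 \lra S^{d-1}V^{\vee}\oo\mc{O}_{\PP}(-1) \lra S^d V^{\vee}\oo\mc{O}_{\PP} \lra S^d(\mc{R}^{\vee}) \lra 0$ obtained from \eqref{eq:dual-taut-ses-PV}, the only difference being that you verify the required vanishings $H^i(\PP,S^d(\mc{R}^{\vee})(-i))=0$ by a direct long-exact-sequence chase, whereas the paper simply cites Lemma~\ref{lem:reg-from-resolution}, of which your chase is just an unpacking. (One pedantic note: for $i=n-2$ the right-hand outer group is the top cohomology $H^{n-1}(\PP,\mc{O}_{\PP}(-n+1))$ rather than intermediate cohomology, but you establish its vanishing anyway in your treatment of the edge case, so the argument is complete.)
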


\begin{proof}
Dualizing \eqref{eq:ses-DdR} (or taking symmetric powers in \eqref{eq:dual-taut-ses-PV}), we get a $2$-step resolution
\[0\lra S^{d-1}(V^{\vee})\oo \mc{O}_{\PP}(-1) \lra S^d(V^{\vee})\oo \mc{O}_{\PP} \lra S^{d}(\mc{R}^{\vee}) \lra 0,\]
of $S^d(\mc{R}^{\vee})$. By Lemma~\ref{lem:reg-from-resolution}, it follows that $S^d(\mc{R}^{\vee})$ is $0$-regular, as desired.
\end{proof}

\begin{lemma}\label{lem:reg-Fq-wedge-Rvee}
 If $q=p^k$ for some $k\geq 0$, then $F^q\left(\bw^i\mc{R}^{\vee}\right)\simeq\bw^i\left(F^q(\mc{R}^{\vee})\right)$ is $i\cdot(q-1)$-regular for $0\leq i\leq n-1$.
\end{lemma}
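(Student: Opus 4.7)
The plan is to construct a Koszul-type resolution of $\bw^i(F^q \mc{R}^{\vee})$ from the Frobenius-twisted tautological sequence, and then bound the regularity via Lemma~\ref{lem:reg-from-resolution}.

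First, the isomorphism $F^q(\bw^i\mc{R}^{\vee})\simeq\bw^i(F^q\mc{R}^{\vee})$ is an immediate consequence of Frobenius commuting with tensor operations, as recorded in \eqref{eq:Fq-tensor-ops}, so it suffices to bound the regularity of the right-hand side. Applying the exact functor $F^q$ to the dual tautological sequence \eqref{eq:dual-taut-ses-PV} yields the short exact sequence
\[ 0 \lra \mc{O}_{\PP}(-q) \lra F^q(V^{\vee}) \oo \mc{O}_{\PP} \lra F^q(\mc{R}^{\vee}) \lra 0. \]

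Setting $L=\mc{O}_{\PP}(-q)$, $B=F^q(V^{\vee})\oo\mc{O}_{\PP}$ and $C=F^q(\mc{R}^{\vee})$, I would invoke the standard Koszul-type resolution attached to such a short exact sequence with $L$ a line bundle, namely
\[ 0 \lra L^{\oo i} \lra L^{\oo(i-1)}\oo B \lra L^{\oo(i-2)}\oo\bw^2 B \lra \cdots \lra L\oo\bw^{i-1}B \lra \bw^i B \lra \bw^i C \lra 0. \]
This is verified by iteratively splicing the rank-one filtration sequences $0\to L\oo\bw^{j-1}C\to\bw^j B\to\bw^j C\to 0$ (twisted by successive powers of $L$) and terminating at $\bw^0 C=\mc{O}_{\PP}$; the rank-one filtration sequences themselves follow from the splitting $\bw^j B \simeq \bw^j C \oplus (L\oo\bw^{j-1}C)$ valid locally whenever $L$ is a line subbundle.

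The term of homological degree $j$ in this resolution (for $0\le j\le i$) is
\[ \mc{F}_j \;=\; \bw^{i-j}(F^q V^{\vee})\oo\mc{O}_{\PP}(-jq), \]
which is a finite direct sum of copies of $\mc{O}_{\PP}(-jq)$ and hence $jq$-regular by \eqref{eq:reg-summands}. Lemma~\ref{lem:reg-from-resolution} then yields
\[ \reg\bigl(\bw^i(F^q\mc{R}^{\vee})\bigr) \;\le\; \max_{0\le j\le i}(jq-j) \;=\; i(q-1), \]
which is the required bound. There is no serious obstacle here: the only conceptual input is the Koszul-type resolution, which rests solely on $L$ being invertible, and the regularity estimate is then immediate from the basic properties collected in Section~\ref{subsec:CM-reg}.
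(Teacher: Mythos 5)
Your proposal is correct and follows essentially the same route as the paper: the paper takes the Koszul-type resolution of $\bw^i\mc{R}^{\vee}$ coming from \eqref{eq:dual-taut-ses-PV}, with terms $\bw^{i-j}V^{\vee}\oo\mc{O}_{\PP}(-j)$, and then applies the exact functor $F^q$ before invoking Lemma~\ref{lem:reg-from-resolution}, which yields literally the same complex $\bw^{i-j}(F^qV^{\vee})\oo\mc{O}_{\PP}(-qj)$ and the same bound $\max_j(qj-j)=i(q-1)$ that you obtain by Frobenius-twisting the sequence first. The only difference is the order in which Frobenius and the Koszul construction are applied, which is immaterial since $F^q$ is exact and commutes with the multilinear operations involved.
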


\begin{proof}
 It follows from \eqref{eq:dual-taut-ses-PV} that $\bw^i\mc{R}^{\vee}$ has a resolution
 \[ 0\lra\mc{O}_{\PP}(-i)\lra V^{\vee} \oo \mc{O}_{\PP}(-i
 +1)
 \lra\cdots\lra \bw^{i-1}V^{\vee}\oo \mc{O}_{\PP}(-1) \lra \bw^i V^{\vee}\oo \mc{O}_{\PP} \lra \bw^i\mc{R}^{\vee} \lra 0.\]
 Applying $F^q(-)$, this yields a resolution $\mc{F}_{\bullet}$ of $F^q\left(\bw^i\mc{R}^{\vee}\right)$ with
 \[\mc{F}_j = F^q\left(\bw^{i-j} V^{\vee}\right)\oo \mc{O}_{\PP}(-q\cdot j),\text{ for }j=0,\cdots,i.\]
 Since $\mc{F}_j$ is $q\cdot j$-regular, and
 \[\max\{q\cdot j-j : j=0,\cdots,i\} = i\cdot(q-1),\]
 the desired conclusion follows from Lemma~\ref{lem:reg-from-resolution}. 
\end{proof}

\begin{lemma}\label{lem:reg-TqSc-Rvee}
 Suppose that $0\leq c<r\cdot q$, where $q=p^k$ for some $k\geq 1$. We have
 \[ \reg\left(T_qS^c\left(\mc{R}^{\vee}\right)\right) \leq (r-1)\cdot(q-2).\]
\end{lemma}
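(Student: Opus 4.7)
The plan is induction on $c$, using a filtration coming from the tautological sequence $0 \to \mc{O}_{\PP}(-1) \to V^{\vee}\oo\mc{O}_{\PP} \to \mc{R}^{\vee} \to 0$. Since $T_qS^c$ is a polynomial functor and $T_qS^j(L) = L^{\oo j}$ for $j<q$ while $T_qS^j(L)=0$ for $j\ge q$ when $L$ is a line bundle, this induces a filtration of the trivial sheaf $T_qS^c(V^{\vee})\oo\mc{O}_{\PP}$ (of regularity $0$) with graded pieces $\mc{O}_{\PP}(-j)\oo T_qS^{c-j}(\mc{R}^{\vee})$ for $j=0,\ldots,q-1$, whose top quotient at $j=0$ is $T_qS^c(\mc{R}^{\vee})$. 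The base case $c<q$ is then immediate: $T_qS^c(\mc{R}^{\vee})=S^c(\mc{R}^{\vee})$ is $0$-regular by Lemma~\ref{lem:Sd-Rvee-0-reg}, and $0\le(r-1)(q-2)$ since $q\ge 2$.

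In the inductive step ($c\ge q$) I would extract the short exact sequence
\[
0\to\mc{G}\to T_qS^c(V^{\vee})\oo\mc{O}_{\PP}\to T_qS^c(\mc{R}^{\vee})\to 0,
\]
where $\mc{G}$ carries a filtration with graded pieces $\mc{O}_{\PP}(-j)\oo T_qS^{c-j}(\mc{R}^{\vee})$ for $j=1,\ldots,q-1$. The long exact sequence in cohomology yields $\reg(T_qS^c(\mc{R}^{\vee}))\le\max(0,\reg(\mc{G})-1)$, and $\reg(\mc{G})$ is controlled by the maximum over $j$ of $j+\reg(T_qS^{c-j}(\mc{R}^{\vee}))$. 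Three regimes appear: (a) if $c-j<q$ then $T_qS^{c-j}(\mc{R}^{\vee})=S^{c-j}(\mc{R}^{\vee})$ is $0$-regular by Lemma~\ref{lem:Sd-Rvee-0-reg}; (b) if $c-j>r(q-1)=N_q(\mc{R}^{\vee})$ the piece vanishes outright; (c) otherwise the inductive hypothesis applies.

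The hard part is that these three estimates, used naively, yield only the weaker bound $r(q-2)$, exceeding the target $(r-1)(q-2)$ by $q-2$ in the upper range of $c$. To close the gap I would invoke the duality \eqref{eq:duality-TqS-TqD}, which using $\det(\mc{R}^{\vee})\simeq\mc{O}_{\PP}(1)$ (as sheaves) gives $T_qS^c(\mc{R}^{\vee})\simeq T_qD^{r(q-1)-c}(\mc{R})\oo\mc{O}_{\PP}(q-1)$. For $c$ in the upper half of its range (roughly $c>r(q-1)/2$), this reduces the question to bounding $\reg(T_qD^d(\mc{R}))$ for $d<r(q-1)/2$; when $d<p$ the truncation is trivial ($T_qD^d=D^d=S^d$) and the regularity is $d$ by Theorem~\ref{thm:sym-pows-R}, while for $p\le d<q$ one can iterate the same filtration/duality scheme on the $\mc{R}$-side. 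Interleaving the inductive bound on the lower range of $c$ with the dualized estimate on the upper range closes the induction at the sharp level $(r-1)(q-2)$.
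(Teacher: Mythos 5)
The interleaving step is where your argument breaks, and the gap is genuine. Your recursion by itself only proves $\reg(T_qS^c(\mc{R}^\vee))\le c-\lceil c/(q-1)\rceil$ (a peeling step of size $j\le q-1$ may cost $j-1$ in regularity), which tops out at $r(q-2)$, as you note. The duality \eqref{eq:duality-TqS-TqD} trades $T_qS^c(\mc{R}^{\vee})$ for $T_qD^{d}\mc{R}\oo\mc{O}_{\PP}(q-1)$ with $d=r(q-1)-c$ --- a truncated divided power of $\mc{R}$, not of $\mc{R}^{\vee}$ --- and you control its regularity only when $d<p$, where it equals $S^d\mc{R}$. For $p\le d<q$ one has $T_qD^d\mc{R}=D^d\mc{R}$, and bounding its regularity is exactly Theorem~\ref{thm:reg-Dd-R} at a smaller Frobenius level; in the paper that theorem is proved \emph{after} and \emph{using} the present lemma (via Corollary~\ref{cor:reg-Tq-Db-R} and Proposition~\ref{prop:reg-FqDa-oo-TqDb}), so quoting it here is circular unless you reorganize everything into a joint induction on $q$, which your sketch does not do. Your proposed substitute, ``iterate the same filtration/duality scheme on the $\mc{R}$-side,'' does not work: in \eqref{eq:taut-ses-PV} the sheaf $\mc{R}$ is the \emph{sub}sheaf, so $D^d\mc{R}$ (resp.\ $T_qD^d\mc{R}$) sits at the bottom of the corresponding filtration of a trivial bundle, and the regularity of a subsheaf is not bounded by the regularities of the ambient sheaf and of the quotient (the obstruction is surjectivity on $H^0$ in the long exact sequence); such reasoning would bound $\reg(D^p\mc{R})$ by about $p-1$, contradicting the true value $(n-1)p-n+2$. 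Concretely, whenever $q\ge p+3$ (so for all $k\ge 2$ when $p\ge 3$, and for $q\ge 8$ when $p=2$) the range $(r-1)(q-1)+2\le c\le r(q-1)-p$ is nonempty; there your recursion only yields a bound $\ge(r-1)(q-2)+1$, while the duality reduction lands in the uncontrolled range $p\le d<q$. (For $q=p$ the problematic range is empty and your scheme does close, but the lemma is needed for all $k\ge 1$.)

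For contrast, the paper gets the sharp constant in one stroke: it resolves $T_qS^c(\mc{R}^{\vee})$ by the Koszul complex \eqref{eq:K-res-TqSc}, with terms $\mc{K}_i=\bw^i\left(F^q(\mc{R}^{\vee})\right)\oo S^{c-iq}(\mc{R}^{\vee})$ for $i=0,\dots,r-1$; each $\mc{K}_i$ is $i(q-1)$-regular by Lemmas~\ref{lem:Sd-Rvee-0-reg} and~\ref{lem:reg-Fq-wedge-Rvee} together with \eqref{eq:reg-tensor}, and Lemma~\ref{lem:reg-from-resolution} then gives $\max_i\{i(q-1)-i\}=(r-1)(q-2)$. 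The idea your peeling misses is that a Koszul term advances the truncation degree by $q$ at a regularity cost of only $q-1$, whereas removing one $\mc{O}_{\PP}(-1)$ at a time produces the slope-$\frac{q-2}{q-1}$ bound $c-\lceil c/(q-1)\rceil$, which cannot get below $r(q-2)$ without outside input. A secondary point: the filtration of $T_qS^c$ along \eqref{eq:dual-taut-ses-PV} with graded pieces $\mc{O}_{\PP}(-j)\oo T_qS^{c-j}(\mc{R}^{\vee})$ is asserted rather than proved; it is true (it follows from the local freeness of $\Sym\mc{E}$ over $\Sym(F^q\mc{E})$ used in Section~\ref{subsec:trunc-div-sym}), but it needs justification if you rely on it.
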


\begin{proof} We consider the Koszul resolution $\mc{K}_{\bullet}$ of $T_qS^c\left(\mc{R}^{\vee}\right)$, given by
\begin{equation}\label{eq:K-res-TqSc}
\mc{K}_i = \bw^i\left(F^q(\mc{R}^{\vee})\right)\oo S^{c-iq}\left(\mc{R}^{\vee}\right),\text{ for }i=0,1,\cdots,r-1.
\end{equation}
It follows from \eqref{eq:reg-tensor}, and Lemmas~\ref{lem:Sd-Rvee-0-reg} and~\ref{lem:reg-Fq-wedge-Rvee} that $\mc{K}_i$ is $i\cdot(q-1)$-regular. Since
\[\max\{ i\cdot(q-1)-i : i=0,\cdots,r-1\} = (r-1)\cdot(q-2),\]
the desired conclusion follows from Lemma~\ref{lem:reg-from-resolution}.
\end{proof}

\begin{corollary}\label{cor:reg-Tq-Db-R}
 If $q=p^k$ for some $k\geq 1$, and $b\geq 0$, then
 \[ \reg(T_qD^b\mc{R}) \leq (n-1)\cdot(q-1)-n+2.\]
\end{corollary}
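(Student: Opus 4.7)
The plan is to reduce the statement about $T_qD^b\mc{R}$ to the already-proven bound for $T_qS^c(\mc{R}^\vee)$ from Lemma~\ref{lem:reg-TqSc-Rvee}, via the duality identity \eqref{eq:duality-TqS-TqD}. Indeed, applying \eqref{eq:duality-TqS-TqD} with $\mc{E}=\mc{R}^{\vee}$ (so $\rk(\mc{E})=n-1$ and $N_q(\mc{E})=(n-1)(q-1)$), and using the identifications $(\mc{R}^{\vee})^{\vee}\simeq\mc{R}$ and $\det(\mc{R}^{\vee})\simeq\mc{O}_{\PP}(1)$, one obtains for every $c$ in the range $0\leq c\leq (n-1)(q-1)$ an isomorphism of the form
\[
T_qS^{c}(\mc{R}^{\vee}) \;\simeq\; T_qD^{(n-1)(q-1)-c}(\mc{R})\oo \mc{O}_{\PP}(q-1).
\]

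First I would handle the trivial case $b>(n-1)(q-1)=N_q(\mc{R})$: here $T_qD^b\mc{R}=0$ by the general description of $T_qD$ recalled in Section~\ref{subsec:trunc-div-sym}, and the bound holds with regularity $-\infty$ by convention. Otherwise, set $c=(n-1)(q-1)-b$, which satisfies $0\leq c<(n-1)q$, so Lemma~\ref{lem:reg-TqSc-Rvee} (applied with $r=n-1$) yields
\[
\reg\bigl(T_qS^{c}(\mc{R}^{\vee})\bigr) \leq (n-2)(q-2).
\]
Rewriting the duality as $T_qD^b\mc{R}\simeq T_qS^{c}(\mc{R}^{\vee})\oo \mc{O}_{\PP}(-(q-1))$ and using the standard identity $\reg(\mc{F}(k))=\reg(\mc{F})-k$, we conclude
\[
\reg(T_qD^b\mc{R}) \leq (n-2)(q-2)+(q-1) = (n-1)(q-1)-n+2,
\]
which is the asserted bound.

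The argument is essentially a bookkeeping exercise once the key machinery—the Koszul-resolution bound on $\reg(T_qS^c(\mc{R}^\vee))$ in Lemma~\ref{lem:reg-TqSc-Rvee} together with the truncation duality \eqref{eq:duality-TqS-TqD}—is in place. The only subtlety is verifying that the line-bundle twist coming from $\det(\mc{R}^{\vee})^{q-1}=\mc{O}_{\PP}(q-1)$ matches the required numerical shift, which is the small arithmetic check carried out above. There is no genuine obstacle in this step.
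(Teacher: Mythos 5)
Your proposal is correct and follows essentially the same route as the paper: the trivial case $b>N_q(\mc{R})$, the duality \eqref{eq:duality-TqS-TqD} with $\det(\mc{R}^{\vee})\simeq\mc{O}_{\PP}(1)$ giving $T_qD^b\mc{R}\simeq T_qS^{c}(\mc{R}^{\vee})\oo\mc{O}_{\PP}(-q+1)$, and then Lemma~\ref{lem:reg-TqSc-Rvee} with $r=n-1$ plus the shift of regularity under twisting. The arithmetic check matches the paper's computation exactly.
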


\begin{proof}
 We let $N_q = N_q(\mc{R}) = (n-1)\cdot(q-1)$, and recall that $T_qD^b\mc{R}=0$ for $b>N_q$, 
 in which case the conclusion is trivial. If $0\leq b\leq N_q$ and if we let $c=N_q-b$, then it follows from \eqref{eq:duality-TqS-TqD} that
 \begin{equation}\label{eq:iso-TqD-TqS-R}
  T_qD^b\mc{R} \simeq T_qS^c\left(\mc{R}^{\vee}\right) \oo \mc{O}_{\PP}(-q+1).
 \end{equation}
 Since $0\leq c\leq N_q<(n-1)\cdot q$, it follows from Lemma~\ref{lem:reg-TqSc-Rvee} (with $r=n-1$) that
 \[ \reg(T_qD^b\mc{R}) = \reg\left(T_qS^c\left(\mc{R}^{\vee}\right)\right) + (q-1) \leq (n-2)\cdot(q-2)+(q-1) = (n-1)\cdot(q-1)-n+2,\]
 concluding our proof.
\end{proof}

Combining the earlier results, we obtain the following key step in the proof of Theorem~\ref{thm:reg-Dd-R}.

\begin{proposition}\label{prop:reg-FqDa-oo-TqDb}
 If $0\leq a<p$, $b\geq 0$, and $q=p^k$ for some $k\geq 1$, then
\[H^1\left(\PP,F^q(D^a\mc{R}) \oo T_qD^b\mc{R} \oo \mc{O}_{\PP}(e)\right)=0\text{ for }e\geq \max\left(\reg\left(F^q(D^a\mc{R})\right) + q-2,(n-1)\cdot(q-1)\right).\]
\end{proposition}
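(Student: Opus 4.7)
The plan is to reduce the desired vanishing to cohomology computations on two short resolutions: the Koszul-type resolution~\eqref{eq:K-res-TqSc} of $T_qS^c(\mc{R}^{\vee})$ from Section~\ref{subsec:trunc-div-sym}, and the two-term resolution of $S^c(\mc{R}^{\vee})$ induced by the dual tautological sequence~\eqref{eq:dual-taut-ses-PV}. First I would dispense with the trivial case: if $b>N_q(\mc{R})=(n-1)(q-1)$ then $T_qD^b\mc{R}=0$ and the conclusion is immediate. Otherwise I set $c=(n-1)(q-1)-b\geq 0$ and use the isomorphism~\eqref{eq:iso-TqD-TqS-R} to rewrite $T_qD^b\mc{R}\simeq T_qS^c(\mc{R}^{\vee})\oo\mc{O}_{\PP}(-q+1)$.

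Tensoring the Koszul resolution of $T_qS^c(\mc{R}^{\vee})$ with $F^q(D^a\mc{R})\oo\mc{O}_{\PP}(e-q+1)$ yields a resolution of $F^q(D^a\mc{R})\oo T_qD^b\mc{R}\oo\mc{O}_{\PP}(e)$ with terms
\[\mc{F}_i=F^q(D^a\mc{R})\oo\bw^i\bigl(F^q(\mc{R}^{\vee})\bigr)\oo S^{c-iq}(\mc{R}^{\vee})\oo\mc{O}_{\PP}(e-q+1),\quad i=0,\ldots,n-2.\]
Running the hypercohomology spectral sequence $E_1^{-i,j}=H^j(\PP,\mc{F}_i)\Rightarrow \bb{H}^{j-i}$ reduces the problem to showing $E_1^{-i,i+1}=0$ for each $i=0,\ldots,n-2$. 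For $i\geq 1$ the target degree is $\geq 2$, so Lemma~\ref{lem:vanishing-FqDr-tensor-G} applies as soon as the twist dominates the regularity of $\bw^i(F^q(\mc{R}^{\vee}))\oo S^{c-iq}(\mc{R}^{\vee})$; combining Lemmas~\ref{lem:Sd-Rvee-0-reg} and~\ref{lem:reg-Fq-wedge-Rvee} with the subadditivity~\eqref{eq:reg-tensor} bounds this regularity by $i(q-1)$, so $E_1^{-i,i+1}=0$ whenever $e-q+1\geq i(q-1)$. The worst case $i=n-2$ gives the threshold $e\geq(n-1)(q-1)$, which is one half of the stated hypothesis.

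The remaining case $i=0$ is where the regularity of $F^q(D^a\mc{R})$ enters. Here I would resolve $S^c(\mc{R}^{\vee})$ by the two-term complex $0\to S^{c-1}(V^{\vee})\oo\mc{O}_{\PP}(-1)\to S^c(V^{\vee})\oo\mc{O}_{\PP}\to S^c(\mc{R}^{\vee})\to 0$ arising from~\eqref{eq:dual-taut-ses-PV}, tensor with $F^q(D^a\mc{R})\oo\mc{O}_{\PP}(e-q+1)$, and sandwich the desired $H^1$ between $H^1(\PP,F^q(D^a\mc{R})(e-q+1))$ and $H^2(\PP,F^q(D^a\mc{R})(e-q))$ in the long exact sequence. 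Both of these groups vanish precisely when $e\geq\reg(F^q(D^a\mc{R}))+q-2$, matching the other half of the hypothesis. The main obstacle, if any, is keeping track of twist shifts so that the two branches of the $\max$ line up exactly with the two vanishing thresholds; once that bookkeeping is done, the argument is a direct application of the regularity estimates and the hypercohomology spectral sequence, with no representation-theoretic subtleties beyond those already absorbed into the preceding lemmas.
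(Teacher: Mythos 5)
Your proposal is correct and follows essentially the same route as the paper: reduce to $b\leq(n-1)(q-1)$, rewrite $T_qD^b\mc{R}$ via \eqref{eq:iso-TqD-TqS-R}, tensor the Koszul resolution \eqref{eq:K-res-TqSc} with $F^q(D^a\mc{R})\oo\mc{O}_{\PP}(e-q+1)$, and kill the anti-diagonal terms $E_1^{-i,i+1}$ of the hypercohomology spectral sequence using Lemma~\ref{lem:vanishing-FqDr-tensor-G} together with the regularity bounds from Lemmas~\ref{lem:Sd-Rvee-0-reg} and~\ref{lem:reg-Fq-wedge-Rvee}. The only (immaterial) divergence is the $i=0$ term, which you treat via the two-term resolution of $S^c(\mc{R}^{\vee})$ coming from \eqref{eq:dual-taut-ses-PV}, whereas the paper just bounds $\reg(\mc{F}_0)\leq\reg(F^q(D^a\mc{R}))+q-1\leq e+1$ by subadditivity \eqref{eq:reg-tensor}; both arguments yield $H^1(\PP,\mc{F}_0(e))=0$ under the hypothesis $e\geq\reg(F^q(D^a\mc{R}))+q-2$ (your word ``precisely'' there is a slight overstatement, but only the stated implication is needed).
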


\begin{proof} We let $N_q=(n-1)\cdot(q-1)$, and note as in the proof of Corollary~\ref{cor:reg-Tq-Db-R} that $T_qD^b\mc{R}=0$ for $b>N_q$. We may therefore assume that $0\leq b\leq N_q$, we let $c=N_q-b<(n-1)\cdot q$, and recall the isomorphism \eqref{eq:iso-TqD-TqS-R}. Letting $r=n-1$ and using the notation \eqref{eq:K-res-TqSc}, we obtain a resolution $\mc{F}_{\bullet}$ of $F^q(D^a\mc{R}) \oo T_qD^b\mc{R}$, with
\[\mc{F}_i = F^q(D^a\mc{R}) \oo \mc{K}_i(-q+1)\text{ for }0\leq i\leq n-2,\ \mc{F}_i=0\text{ for }i\geq n-1.\]
Suppose now that $e\geq \reg\left(F^q(D^a\mc{R})\right) + q-2$. Since $\mc{K}_0=S^b(\mc{R}^{\vee})$
is $0$-regular by Lemma~\ref{lem:Sd-Rvee-0-reg}, and $\mc{O}_{\PP}(-q+1)$ is $(q-1)$-regular, we get that
\[ e+1\geq \reg\left(F^q(D^a\mc{R})\right) + (q-1) \overset{\eqref{eq:reg-tensor}}{\geq} \reg(\mc{F}_0).\]
It follows that $\mc{F}_0$ is $(e+1)$-regular, hence
\begin{equation}\label{eq:H1F0e=0}
H^1(\PP,\mc{F}_0(e)) \overset{\eqref{eq:def-m-reg}}{=} 0.
\end{equation}
If we assume also that $e\geq(n-1)\cdot(q-1)$, then using the fact that $\reg(\mc{K}_i)\leq i\cdot(q-1)$, observed in the proof of Lemma~\ref{lem:reg-TqSc-Rvee}, it follows that whenever $\mc{K}_i$ is non-zero, we have
\[\reg(\mc{K}_i(-q+1)) = \reg(\mc{K}_i)+(q-1)\leq(i+1)\cdot(q-1)\leq(n-1)\cdot(q-1).\]
We can then apply Lemma~\ref{lem:vanishing-FqDr-tensor-G} with $\mc{G}=\mc{K}_i(-q+1)$ to conclude that
\[ H^j\left(\PP,\mc{F}_i(e)\right)=0\text{ for }j\geq 2\text{ and }i\geq 0.\]
Combining the above vanishing with \eqref{eq:H1F0e=0}, and applying (as in the proof of Lemma~\ref{lem:H1vanishing-FqDr-tensor-G}) the hypercohomology spectral sequence to the resolution $\mc{F}_{\bullet}(e)$ of $F^q(D^a\mc{R}) \oo T_qD^b\mc{R} \oo \mc{O}_{\PP}(e)$, we get the desired conclusion.
\end{proof}

We are now ready to prove the main result of this section.

\begin{proof}[Proof of Theorem~\ref{thm:reg-Dd-R}] We first check by induction on $q$ that $D^d\mc{R}$ is $e$-regular, where
\begin{equation}\label{eq:def-e}
e = (t+n-2)\cdot q - n+2.
\end{equation}
For $q=1$ we have $d=t<p$ and the conclusion follows from $D^d\mc{R}\simeq\Sym^d\mc{R}$ and Theorem~\ref{thm:sym-pows-R}. We will therefore assume that $q>1$. Notice that $e\geq t\geq 0$, hence $e-i\geq -n+1$ for $1\leq i\leq n-1$. We can therefore apply Lemma~\ref{lem:coh2-higher-DdR} to conclude that
\[ H^i\left(\PP,D^d\mc{R}(e-i)\right)=0\text{ for }i\geq 2.\]
To prove that $D^d\mc{R}$ is $e$-regular, it remains to check that 
\[H^1\left(\PP,D^d\mc{R}(e-1)\right)=0.\]
Since $D^d\mc{R}$ has a filtration with composition factors
\[F^q(D^a\mc{R}) \oo T_qD^{d-aq}\mc{R},\ a=0,\cdots,t,\]
it suffices to check that
\[H^1\left(\PP,F^q(D^a\mc{R}) \oo T_qD^{d-aq}\mc{R}\oo \mc{O}_{\PP}(e-1)\right)=0\text{ for }a=0,\cdots,t.\]
For $0\leq a\leq t-1$, this follows from Proposition~\ref{prop:reg-FqDa-oo-TqDb}: indeed, since $t\geq 1$, we have
\[e-1 \geq (n-1)\cdot q - n+1=(n-1)\cdot (q-1);\]
moreover, for $1\leq a\leq t-1$, it follows from Lemma~\ref{lem:reg-Fq-Da-R} that
\[\reg\left(F^q(D^a\mc{R})\right) + q-2 = q\cdot(n+a-2)-n+q \leq q\cdot(n+t-2)-n \leq e-1,\]
while for $a=0$ we have $\reg\left(F^q(D^a\mc{R})\right) + q-2=q-2\leq tq+(n-2)\cdot(q-1)-1 = e-1$.

When $a=t$, Proposition~\ref{prop:reg-FqDa-oo-TqDb} is no longer sufficient, so we need a different argument. Notice that since $d-tq<q$, we have $T_qD^{d-tq}\mc{R}=D^{d-tq}\mc{R}$. We write $c=d-tq$, $\mc{G}=D^c\mc{R}$ and we verify that
\[H^1\left(\PP,F^q(D^t\mc{R}) \oo \mc{G}(e-1)\right)=0\]
using Lemma~\ref{lem:H1vanishing-FqDr-tensor-G}. We have that for $i=1,2,\cdots,n-2$ that
\[ e-1-q(t+i) = q(n-2-i)-n+1\geq -n+1,\]
hence the vanishing $H^{i+1}(\PP,\mc{G}(e-1-q(t+i))=0$ follows from Lemma~\ref{lem:coh2-higher-DdR}. To prove
$H^{1}(\PP,\mc{G}(e-1-qt))=0$, it suffices to check that
\begin{equation}\label{eq:regG-up-bound} \reg(\mc{G}) \leq e-qt = (n-2)\cdot q-n+2.
\end{equation}
If $c=0$ then there is nothing to prove. If $c>0$, we consider $1\leq t'<p$ and $q'=p^{k'}$ such that $t'q'\leq c<(t'+1)q'$. Since $c<q$, we have $q'\leq q/p$, and we know by induction that
\[ \reg(D^c\mc{R}) = (t'+n-2)q'-n+2 \leq (p-1+n-2)q'-n+2.\]
To prove \eqref{eq:regG-up-bound} it is then enough to check that
\[(p-1+n-2)\cdot\frac{q}{p} \leq (n-2)\cdot q,\quad\text{or equivalently},\quad p-1\leq (n-2)\cdot(p-1),\]
which is true because $n\geq 3$.

To prove that $\reg(D^d\mc{R})=e$, it remains to check that $D^d\mc{R}$ is not $(e-1)$-regular. By Lemma~\ref{lem:reg-Dr-increases}, it suffices to consider the case when $d=tq$, and it is enough to show that
\[ H^1\left(\PP,D^{tq}\mc{R}(e-2)\right)\neq 0.\]
From the discussion in Section~\ref{subsec:trunc-div-sym}, there is a short exact sequence
\begin{equation}\label{eq:ses-F-DtqR-FqDtR} 
0 \lra \mc{F} \lra D^{tq}\mc{R} \lra F^q(D^{t}\mc{R}) \lra 0,
\end{equation}
where $\mc{F}$ has a filtration with composition factors $F^q(D^a\mc{R})\oo T_qD^{(t-a)q}\mc{R}$, for $a=0,\cdots,t-1$. By Lemma~\ref{lem:reg-Fq-Da-R}, 
\[ H^1\left(\PP,F^q(D^{t}\mc{R})(e-2)\right)\neq 0,\]
so it suffices to check that $H^2(\PP,\mc{F}(e-2))=0$, which in turn is implied by
\[ H^2(\PP,F^q(D^a\mc{R})\oo T_qD^{(t-a)q}\mc{R}(e-2))=0\text{ for }a=0,\cdots,t-1.\]
We prove this vanishing by combining Lemma~\ref{lem:coh2-higher-DdR} with $\mc{G}=T_qD^{(t-a)q}\mc{R}$ and Corollary~\ref{cor:reg-Tq-Db-R}. To that end, we need to verify the inequality
\[(t+n-2)q-n \geq (n-1)(q-1)-n+2,\]
which (since $t\geq 1$) is implied by $(n-1)q\geq(n-1)(q-1)+2$, which in turn is a consequence of $n\geq 3$.
\end{proof}

A slight refinement of the argument in the proof of Theorem~\ref{thm:reg-Dd-R} gives the explicit cohomology formula from Theorem~\ref{thm:corner-cohomology}, as follows.

\begin{proof}[Proof of Theorem~\ref{thm:corner-cohomology}]

If we let $q=p^k$, then we have
\[d=tq\quad\text{and}\quad a = (t+n-2)q-n+1 = e-1,\]
where $e$ is as in \eqref{eq:def-e}. The identification
\[H^{n-1}\left(X,\mc{O}_X(a,b)\right) = H^1\left(\PP,D^{d}\mc{R}(a-1)\right)\]
follows from \eqref{eq:cohom-P-vs-incvar} and the fact that $b=-d-n+1$. To prove that the above cohomology groups are given by $F^q(\bb{S}_{(t-1,t-1)}V)$, we analyze the argument in the proof of Theorem~\ref{thm:reg-Dd-R}: using the short exact sequence \eqref{eq:ses-F-DtqR-FqDtR} and $H^2(\PP,\mc{F}(e-2))=0$, we get an exact sequence
\[ H^1(\PP,\mc{F}(e-2)) \lra H^1(\PP,D^{tq}\mc{R}(e-2)) \lra H^1(\PP,F^q(D^t\mc{R})(e-2))\lra 0.\]
Since $D^d\mc{R}(a-1)=D^{tq}\mc{R}(e-2)$, we are reduced via Lemma~\ref{lem:reg-Fq-Da-R} to proving that $H^1(\PP,\mc{F}(e-2))=0$. This can in turn be verified by showing that
\begin{equation}\label{eq:H1-vanishing-FqDj-TqDt-j} H^1(\PP,F^q(D^j\mc{R})\oo T_qD^{(t-j)q}\mc{R}(e-2))=0\text{ for }j=0,\cdots,t-1,
\end{equation}
which we prove by an argument similar to one used in the proof of Theorem~\ref{thm:reg-Dd-R}. We fix $0\leq j\leq t-1$ and let
\[\mc{G} = T_qD^{(t-j)q}\mc{R}.\]
For $i=1,\cdots,n-2$, we have
\[ e-2-q(j+i) \geq e-2-q(j+n-2) = (t-j)q-n \geq q-n \geq -n+1,\]
hence by Lemma~\ref{lem:coh2-higher-DdR} we get
\begin{equation}\label{eq:Hi+1-vanish-Gtwist} H^{i+1}(\PP,\mc{G}(e-2-q(j+i))) = 0 \quad\text{for}\quad i=1,\cdots,n-2.
\end{equation}
Moreover, since $t\geq j+1$, we have
\[ e-2-qj = (t-j+n-2)q-n \geq (n-1)q-n \geq (n-1)(q-1)-n+1\geq\reg(\mc{G})+1,\]
where the last inequality follows from Lemma~\ref{cor:reg-Tq-Db-R}. It follows that
\[ H^1(\PP,\mc{G}(e-2-qj))=0,\]
which combined with \eqref{eq:Hi+1-vanish-Gtwist} shows that Lemma~\ref{lem:H1vanishing-FqDr-tensor-G} applies to give \eqref{eq:H1-vanishing-FqDj-TqDt-j}. The final assertion of the Theorem was discussed in Remark~\ref{rem:simplicity}. 
\end{proof}

We can now use the relation between the cohomology of line bundles on $X$ and that of corresponding vector bundles on $\PP$ to verify the main result of the paper.

\begin{proof}[Proof of Theorem~\ref{thm:main}]

We write $q=p^k$ as usual. We begin by observing that $d=-b-n+1\geq 1$, and that if we let $e=a-1$ then $e\geq d-1\geq 0$. Using \eqref{eq:cohom-P-vs-incvar} we get
\[ H^i(X,\mc{L}) = H^{i-n+2}\left(\PP,D^d\mc{R}(e)\right)\text{ for all }i\in\bb{Z}.\]
By Lemma~\ref{lem:coh2-higher-DdR}, this shows that $H^i(X,\mc{L})=0$ for $i\not\in\{n-2,n-1\}$, proving part (3) of the Theorem. To prove part (1), it suffices to verify that for $e\geq 0$ we have the equivalence
\begin{equation}\label{eq:H1vanish-e-lowerbound} H^1(\PP,D^d\mc{R}(e))=0 \Longleftrightarrow e\geq(t+n-2)q-n+1.
\end{equation}
The implication ``$\Longleftarrow$'' is a direct consequence of Theorem~\ref{thm:reg-Dd-R}. To prove ``$\Longrightarrow$'', suppose by contradiction that $H^1(\PP,D^d\mc{R}(e))= 0$ for some $0\leq e\leq(t+n-2)q-n$. Since $e+1-i>-n$ for $i\leq n-1$, it follows from Lemma~\ref{lem:coh2-higher-DdR} that
\[H^i\left(\PP,D^d\mc{R}(e+1-i)\right) = 0\text{ for }i=2,\cdots,n-1,\]
which implies that $\reg(D^d\mc{R})\leq e+1\leq(t+n-2)q-n+1$, contradicting Theorem~\ref{thm:reg-Dd-R}.

Part (2) is equivalent to the assertion that for $e\geq d-1\geq 0$
\[ H^0(\PP,D^d\mc{R}(e)) = 0 \Longleftrightarrow n=3,\ d = (t+1)q-1,\text{ and }e=d-1,\text{ or if }d=t<p\text{ and }e=d-1.\]
Based on \eqref{eq:ses-DdR}, we can derive a long exact sequence
\[ 0 \lra H^0(\PP,D^d\mc{R}(e)) \lra D^dV \oo \Sym^e V \lra D^{d-1}V \oo \Sym^{e+1}V \lra H^1(\PP,D^d\mc{R}(e)) \lra 0.\]
If $e\geq d$ then
\[ \dim(D^dV \oo \Sym^e V) - \dim(D^{d-1}V \oo \Sym^{e+1}V) = \dim(\bb{S}_{(e,d)}V) > 0,\]
hence $H^0(\PP,D^d\mc{R}(e))$. If $e=d-1$ then $\dim(D^dV \oo \Sym^e V) = \dim(D^{d-1}V \oo \Sym^{e+1}V)$ and therefore
\[\dim\left(H^0(\PP,D^d\mc{R}(e))\right) = \dim\left(H^1(\PP,D^d\mc{R}(e))\right).\]
Using \eqref{eq:H1vanish-e-lowerbound}, the terms above vanish if and only if
\begin{equation}\label{eq:vanishingH1-bdry}
e=d-1\geq (t+n-2)q-n+1.
\end{equation}
Since $d\leq (t+1)q-1$, the above inequality implies $n-3\geq(n-3)q$, which can only happen when $n=3$ or $q=1$. When $q=1$, we have $d=t$ and \eqref{eq:vanishingH1-bdry} holds. When $n=3$, we have that \eqref{eq:vanishingH1-bdry} is equivalent to $d=(t+1)q-1$ and $e=d-1$, concluding our proof.
\end{proof}

We end this section by explaining an approach to non-vanishing of cohomology via the theory of Frobenius splittings as used in the work of Andersen in the 80s. In our work we chose to take a different approach in order to obtain some precise descriptions of cohomology groups (see Theorem~\ref{thm:corner-cohomology}), and we will further refine our arguments in the case $n=3$ in the following section. But if one is only interested in the non-vanishing statement, then perhaps the justification below is more straightforward.

\begin{remark}\label{rem:And-Frob-splittings}
  It follows from \cite{anderson-frob}*{Proposition~3.3} that if $H^i(X,\mc{O}_X(a,b))\neq 0$ for some $i,a,b$ then
  \begin{equation}\label{eq:And-nonvanish}
  H^i\left(X,\mc{O}_X(aq+r,bq+s)\right)\neq 0 \text{ for all }0\leq r,s < q,
  \end{equation}
  where as usual $q=p^k$ is a power of $p=\op{char}(\kk)$. By \eqref{eq:cohom-P-vs-incvar}, \eqref{eq:reg-DdR-H1} and Theorem~\ref{thm:sym-pows-R} we have
\[H^{n-1}\left(X,\mc{O}_X(t-1,-t-n+1)\right)\simeq H^1(\PP,D^t\mc{R}(t-2))\neq 0\text{ for }1\leq t<p.\]
Serre duality (using \eqref{eq:omega-X} and the fact that $\dim(X)=2n-3$) then implies
\[ H^{n-2}\left(X,\mc{O}_X(-t-n+2,t)\right)\simeq H^{n-1}\left(X,\mc{O}_X(t-1,-t-n+1)\right)^{\vee}\neq 0,\]
and \eqref{eq:And-nonvanish} together with Serre duality yields for $0\leq s<q$
\[ 0\neq H^{n-2}\left(X,\mc{O}_X(-(t+n-2)q,tq+s)\right)^{\vee}\simeq H^{n-1}\left(X,\mc{O}_X((t+n-2)q-n+1,-tq-s-n+1)\right).\]
Applying \eqref{eq:cohom-P-vs-incvar} again we get
\[0\neq H^{n-1}\left(X,\mc{O}_X((t+n-2)q-n+1,-tq-s-n+1)\right)\simeq H^1\left(\PP,D^{tq+s}\mc{R}((t+n-2)q-n)\right),\]
which implies
\[ \reg(D^{tq+s}\mc{R}) \geq (t+n-2)q-n+2.\]
The main difficulty in proving Theorem~\ref{thm:reg-Dd-R} is therefore to establish the appropriate cohomology vanishing statements for $D^d\mc{R}(e)$.
\end{remark}

\section{Characters for small values of $d$}\label{sec:small-d}

The goal of this section is to enhance the non-vanishing results of the previous section by computing explicitly the characters of the $\SL$-representations $H^1(\PP,D^d\mc{R}(e))$ for small values of $d$, and giving a proof of Theorem~\ref{thm:chars-d<2p}. Our argument is based on the fundamental exact sequence
\begin{equation}\label{eq:ses-DdR-dsmall-2p}
    0 \lra T_pD^d\mc{R}(e) \lra D^d\mc{R}(e) \lra F^p\mc{R} \oo D^{d-p}\mc{R}(e) \lra 0,
\end{equation}
obtained by dualizing the Koszul resolution from \eqref{eq:K-res-TqSc}, and using the fact that $p\leq d<2p$.

\begin{lemma}\label{lem:TpDdR-vanishing-coh}
 If $e\geq d-1$ then
 \[ H^i(\PP,T_pD^d\mc{R}(e)) = 0 \text{ for }i>0.\]
\end{lemma}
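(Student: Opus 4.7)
The plan is to feed the short exact sequence \eqref{eq:ses-DdR-dsmall-2p} into the cohomology long exact sequence and extract vanishings from the tools already developed in Section~\ref{sec:divided-taut-sub}. Since $0\le d-p<p$ we have $D^{d-p}\mc{R}=S^{d-p}\mc{R}$, and by Theorem~\ref{thm:sym-pows-R} this sheaf has regularity $d-p$; the sequence becomes
\[ 0\to T_pD^d\mc{R}(e)\to D^d\mc{R}(e)\to F^p\mc{R}\otimes S^{d-p}\mc{R}(e)\to 0. \]

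First I would dispose of the higher cohomology of the outer terms. Lemma~\ref{lem:coh2-higher-DdR} applied with $q=1$ and $a=d$ gives $H^i(\PP,D^d\mc{R}(e))=0$ for $i\ge 2$, using $e\ge d-1\ge -n+1$. Lemma~\ref{lem:vanishing-FqDr-tensor-G} applied with $a=1$, $q=p$, $\mc{G}=S^{d-p}\mc{R}$, and $m=d-p$ gives $H^i(\PP,F^p\mc{R}\otimes S^{d-p}\mc{R}(e))=0$ for $i\ge 2$, using $e\ge d-p$. Splicing these into the long exact sequence immediately yields $H^i(\PP,T_pD^d\mc{R}(e))=0$ for $i\ge 3$, and reduces the problem to the cases $i=1$ and $i=2$.

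The cases $i=1,2$ are the delicate part: by Theorem~\ref{thm:reg-Dd-R}, the group $H^1(\PP,D^d\mc{R}(e))$ is typically non-zero in the boundary regime $e\approx d-1$, so the long exact sequence alone cannot force $H^1$ or $H^2$ of $T_pD^d\mc{R}(e)$ to vanish. To handle this, I would dualize via \eqref{eq:duality-TqS-TqD}, identifying $T_pD^d\mc{R}\simeq T_pS^c(\mc{R}^\vee)\otimes\mc{O}_\PP(-p+1)$ with $c=(n-1)(p-1)-d$, and apply the Koszul resolution \eqref{eq:K-res-TqSc}. In the range $p\le d<2p$ the non-zero Koszul terms $\mc{K}_i=\bw^i(F^p\mc{R}^\vee)\otimes S^{c-ip}(\mc{R}^\vee)$ are limited to $i\le\lfloor c/p\rfloor\le n-3$, and the hypercohomology spectral sequence applied to the twisted resolution $\mc{K}_i(e-p+1)$, combined with the regularity bounds from Lemmas~\ref{lem:Sd-Rvee-0-reg} and \ref{lem:reg-Fq-wedge-Rvee}, controls each column.

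The main obstacle is that these regularity bounds give $H^j(\mc{K}_i(e-p+1))=0$ for $j\ge(i+1)(p-1)-e$, which is not automatically $\le i+1$ throughout the range $e\ge d-1$; closing the gap requires either exploiting the Koszul differential to cancel potentially non-vanishing $E_1$ entries, or equivalently establishing directly that the natural map $H^0(\PP,D^d\mc{R}(e))\twoheadrightarrow H^0(\PP,F^p\mc{R}\otimes S^{d-p}\mc{R}(e))$ is surjective while $H^1(\PP,D^d\mc{R}(e))\hookrightarrow H^1(\PP,F^p\mc{R}\otimes S^{d-p}\mc{R}(e))$ is an isomorphism --- a matching that I expect to follow from identifying both groups with the explicit (modular) Schur-type modules that will appear in Theorem~\ref{thm:chars-d<2p}.
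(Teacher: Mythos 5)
There is a genuine gap: your argument only establishes the vanishing for $i\geq 3$, and the cases $i=1,2$ --- which are the whole content of the lemma, since for $n=3$ there is nothing beyond them --- are left open. As you yourself observe, the sequence \eqref{eq:ses-DdR-dsmall-2p} presents $T_pD^d\mc{R}$ as a \emph{subsheaf} of $D^d\mc{R}$, and $H^1(\PP,D^d\mc{R}(e))$ is typically non-zero for $e\approx d-1$ when $d\geq p$ (Theorem~\ref{thm:reg-Dd-R}), so the long exact sequence cannot kill $H^1$ and $H^2$ of the kernel. Your two proposed repairs do not close this. The Koszul/duality route gives, as you compute, vanishing of $H^j(\mc{K}_i(e-p+1))$ only for $j\geq (i+1)(p-1)-e$, which is weaker than the needed $E_1^{-i,j}=0$ for $j>i$ unless $e$ is much larger than $d-1$; you would have to analyze the Koszul differentials explicitly, and no such analysis is provided. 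The second repair --- deducing that $H^0(\PP,D^d\mc{R}(e))\to H^0(\PP,F^p\mc{R}\oo S^{d-p}\mc{R}(e))$ is surjective and $H^1(\PP,D^d\mc{R}(e))\to H^1(\PP,F^p\mc{R}\oo S^{d-p}\mc{R}(e))$ is an isomorphism from the character identifications in Theorem~\ref{thm:chars-d<2p} --- is circular: the paper proves Theorem~\ref{thm:chars-d<2p} \emph{from} this lemma (the identification $H^1(\PP,D^d\mc{R}(e))=H^1(\PP,F^p\mc{R}\oo S^{d-p}\mc{R}(e))$ is exactly what the lemma is used for).

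The missing idea is to switch from divided to symmetric powers. Since $T_pD^d\mc{R}\simeq T_pS^d\mc{R}$, one can instead use the cokernel presentation \eqref{eq:q-truncated-symmetric-E} (valid as a short exact sequence because $p\leq d<2p$):
\[ 0 \lra F^p\mc{R}\oo S^{d-p}\mc{R}(e) \lra S^d\mc{R}(e) \lra T_pS^d\mc{R}(e) \lra 0 .\]
Now the truncation sits as a \emph{quotient}, and the middle term is $S^d\mc{R}(e)$, which is $d$-regular by Theorem~\ref{thm:sym-pows-R} (characteristic-free), so all its higher cohomology vanishes for $e\geq d-1$. The long exact sequence then reduces the lemma to $H^i(\PP,F^p\mc{R}\oo S^{d-p}\mc{R}(e))=0$ for $i\geq 2$, which is exactly the application of Lemma~\ref{lem:vanishing-FqDr-tensor-G} you already made (with $\mc{G}=S^{d-p}\mc{R}$, $m=d-p\leq e$). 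So the two ingredients you invoke suffice, but only after replacing $D^d\mc{R}$ by $S^d\mc{R}$ in the presentation; with your choice of presentation the statement cannot be reached without substantial extra work.
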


\begin{proof} We recall from Section~\ref{subsec:trunc-div-sym} that $T_pD^d\mc{R}\simeq T_pS^d\mc{R}$, and use the analogue of \eqref{eq:ses-DdR-dsmall-2p} to resolve $T_pS^d\mc{R}$ on the left, obtaining an exact sequence
\[0 \lra F^p\mc{R} \oo S^{d-p}\mc{R}(e) \lra S^d\mc{R}(e) \lra T_pS^d\mc{R}(e) \lra 0.\]
Since $e\geq d-1$, it follows from Theorem~\ref{thm:sym-pows-R} that $S^d\mc{R}(e)$ has no higher cohomology, so it suffices to prove that
\begin{equation}\label{eq:vanishing-FpR-Sd-pR}
H^i(\PP,F^p\mc{R} \oo S^{d-p}\mc{R}(e)) = 0\text{ for }i\geq 2.
\end{equation}
This is a direct application of Lemma~\ref{lem:vanishing-FqDr-tensor-G}, using the fact that $\mc{G} = S^{d-p}\mc{R}$ is $(d-p)$-regular, together with the inequality $e\geq d-p$.
\end{proof}

We record one more elementary character calculation before explaining the proof of Theorem~\ref{thm:chars-d<2p}.

\begin{lemma}\label{lem:coh-twists-Dtr}
 If $0\leq t<p$ then the following are equivalent
 \begin{itemize}
     \item $H^j(\PP,S^t\mc{R}(-f))\neq 0$ for some $j\geq 2$.
     \item $j=n-1$ and $f\geq n$.
 \end{itemize} 
 Moreover, if these conditions are satisfied then
 \[ \left[H^{n-1}(\PP,S^t\mc{R}(-f))\right] = h_t\cdot h_{f-n}^{\vee} - h_{t-1}\cdot h_{f-1-n}^{\vee}. \]
\end{lemma}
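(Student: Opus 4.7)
The plan is to exploit the short exact sequence \eqref{eq:ses-DdR} specialized to $a=t$: since $t<p$ we have $D^t\mc{R}\simeq S^t\mc{R}$, and twisting by $\mc{O}_{\PP}(-f)$ yields
\[
0 \lra S^t\mc{R}(-f) \lra S^tV \oo \mc{O}_{\PP}(-f) \lra S^{t-1}V\oo\mc{O}_{\PP}(-f+1) \lra 0.
\]
The only external input will be the classical cohomology of line bundles on $\PP$: $H^j(\PP,\mc{O}_{\PP}(m))$ vanishes for $0<j<n-1$, while $H^{n-1}(\PP,\mc{O}_{\PP}(m))\neq 0$ if and only if $m\leq -n$, in which case Serre duality gives $[H^{n-1}(\PP,\mc{O}_{\PP}(m))]=h_{-m-n}^{\vee}$, with the convention $h_c^{\vee}=0$ for $c<0$.

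First I would take the long exact sequence in cohomology. Since $n\geq 3$, for each $j$ with $2\leq j\leq n-2$ both flanking terms vanish, so $H^j(\PP,S^t\mc{R}(-f))=0$ in that range; and $H^j$ vanishes for $j\geq n$ for dimension reasons. This reduces the entire problem to $j=n-1$, where the additional vanishing $H^{n-2}(\PP,\mc{O}_{\PP}(-f+1))=0$ collapses the long exact sequence to the short exact sequence
\[
0 \lra H^{n-1}(\PP,S^t\mc{R}(-f)) \lra S^tV\oo H^{n-1}(\PP,\mc{O}_{\PP}(-f)) \lra S^{t-1}V\oo H^{n-1}(\PP,\mc{O}_{\PP}(-f+1)) \lra 0.
\]

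When $f<n$ both of the right-hand terms vanish, so $H^{n-1}(\PP,S^t\mc{R}(-f))=0$, proving vanishing. When $f\geq n$, the middle term is non-zero and the alternating character across the short exact sequence gives
\[
\left[H^{n-1}(\PP,S^t\mc{R}(-f))\right] = h_t\cdot h_{f-n}^{\vee} - h_{t-1}\cdot h_{f-1-n}^{\vee},
\]
which is the desired formula. Non-vanishing in this range then follows from a dimension count: for $f=n$ the formula reduces to $h_t\neq 0$, and for $f>n$ one checks $\binom{t+n-1}{n-1}\binom{f-1}{n-1}>\binom{t+n-2}{n-1}\binom{f-2}{n-1}$ directly. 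No serious obstacle is anticipated --- the lemma is essentially a routine consequence of the defining sequence for $\mc{R}$ combined with Bott-style line bundle cohomology on $\PP$.
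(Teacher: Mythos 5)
Your proof is correct and follows essentially the same route as the paper: twisting the sequence \eqref{eq:ses-DdR} (with $D^t\mc{R}\simeq S^t\mc{R}$), using that line bundles on $\PP$ have cohomology only in degrees $0$ and $n-1$ to isolate $j=n-1$, reading off the character from the resulting short exact sequence, and settling non-vanishing for $f\geq n$ by a dimension count. No issues.
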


\begin{proof}
 We use the identification $S^t\mc{R}=D^t\mc{R}$ and \eqref{eq:ses-DdR} to get an exact sequence
 \[ 0 \lra D^t\mc{R}(-f) \lra D^tV(-f) \lra D^{t-1}V(-f+1)\lra 0.\]
 Since $D^tV(-f)$ and $D^{t-1}V(-f+1)$ can only have cohomology in degree $0$ and $n-1$, it follows by the long exact sequence in cohomology that $H^j(\PP,S^t\mc{R}(-f))$ can be non-zero only when $j=n-1$ and $-f\leq -n$. Moreover, by Serre duality one has $H^{n-1}(\PP,\mc{O}_{\PP}(-f))=(\Sym^{f-n}V)^{\vee}$, hence we get a short exact sequence
 \[0 \lra H^{n-1}(\PP,S^t\mc{R}(-f)) \lra D^tV\oo (\Sym^{f-n}V)^{\vee} \lra D^{t-1}V \oo (\Sym^{f-1-n}V)^{\vee} \lra 0,\]
 from which the character formula follows. That $H^{n-1}(\PP,S^t\mc{R}(-f))$ is indeed non-zero for $f\geq n$ follows from the above by a dimension calculation, but the reader familiar with computations with symmetric functions can also check that $h_t\cdot h_{f-n}^{\vee} - h_{t-1}\cdot h_{f-1-n}^{\vee}=s_{(f-n+t,(f-n)^{n-2})}$ is in fact given by a single Schur function.
\end{proof}

We are now ready to prove the main result of this section.

\begin{proof}[Proof of Theorem~\ref{thm:chars-d<2p}]
It follows from Lemma~\ref{lem:TpDdR-vanishing-coh} and \eqref{eq:ses-DdR-dsmall-2p} that we have an identification
\[H^1(\PP,D^d\mc{R}(e)) = H^1(\PP,F^p\mc{R} \oo S^{d-p}\mc{R}(e)),\]
so our focus will be to compute the cohomology group on the right. Note that by Theorem~\ref{thm:reg-Dd-R} we may assume that $e\leq (n-1)p-n$, otherwise the groups above vanish identically, and so does
\[ s'_{(e+p,d-p)} = h'_{e+p}\cdot h_{d-p}-h'_{e+p+1}\cdot h_{d-p-1},\]
since $h'_i = 0$ for $i>N_p(V) = n(p-1)$ (see Section~\ref{subsec:trunc-div-sym}).

The special case $a=1$ of Theorem~\ref{thm:sym-pows-R} gives a left resolution of $\mc{R}$ by direct sum of line bundles. We apply the Frobenius functor $F^p$ and tensor with $S^{d-p}\mc{R}(e)$ to obtain a resolution $\tilde{\mc{F}}_\bullet$ of $F^p\mc{R} \oo S^{d-p}\mc{R}(e)$, with
\[ \tilde{\mc{F}}_i = F^p\left(\bw^{i+2}V\right) \oo S^{d-p}\mc{R}(e-(i+1)p),\quad\text{for}\quad i=0,\cdots,n-2.\]
By Lemma~\ref{lem:coh-twists-Dtr}, the sheaves $\tilde{\mc{F}}_i$ can only have cohomology in degrees $0,1,n-1$, and we know that $H^1(\PP,\tilde{\mc{F}}_0)=0$ by Theorem~\ref{thm:sym-pows-R} and the inequality $e-p\geq d-1-p$. We consider as before the hypercohomology spectral sequence
\[ E_1^{-i,j} = H^j(\PP,\tilde{\mc{F}}_i) \Longrightarrow  H^{j-i}(\PP,F^p\mc{R} \oo S^{d-p}\mc{R}(e)),\]
and observe that the only non-zero terms in the region $j>i$ are among $E_1^{-i,n-1}$. Combining this with \eqref{eq:vanishing-FpR-Sd-pR}, we conclude that there is an exact complex
\[ 0 \lra H^1(\PP,F^p\mc{R} \oo S^{d-p}\mc{R}(e)) \lra E_1^{-n+2,n-1} \lra E_1^{-n+3,n-1} \lra \cdots \lra E_1^{0,n-1}\lra 0,\]
and therefore
\[ \left[H^1(\PP,F^p\mc{R} \oo S^{d-p}\mc{R}(e))\right] = \sum_{i=0}^{n-2} (-1)^i \cdot \left[ E_1^{-n+2+i,n-1}\right]. \]
Using the identification $e_{n-i} = e_i^{\vee}$ and Lemma~\ref{lem:coh-twists-Dtr} with $t=d-p$ and $f=(n-1-i)p-e$, we get
\begin{equation}\label{eq:char-H1-dsmall-interm} \left[H^1(\PP,F^p\mc{R} \oo S^{d-p}\mc{R}(e))\right] = \sum_{i=0}^{n-2} (-1)^i\cdot F^p(e_i^{\vee})\cdot(h_{d-p}\cdot h_{(n-1-i)p-e-n}^{\vee} - h_{d-p}\cdot h_{(n-1-i)p-e-n-1}^{\vee}).
\end{equation}
Taking characters in the Koszul resolution
\[ \cdots \lra F^p\left(\bw^2 V\right) \oo \Sym^{r-2p}V \lra F^pV \oo \Sym^{r-p}V \lra \Sym^r V \lra T_p\Sym^r V \lra 0,\]
yields the identity 
\[ h'_r = \sum_{i\geq 0}(-1)^i F^p(e_i) \cdot h_{r-ip}. \]
Taking duals allows us to simplify \eqref{eq:char-H1-dsmall-interm} to get
\[\left[H^1(\PP,F^p\mc{R} \oo S^{d-p}\mc{R}(e))\right] = h_{d-p}\cdot(h'_{(n-1)p-e-n})^{\vee}-h_{d-p-1}\cdot(h'_{(n-1)p-e-n-1})^{\vee}.\]
To conclude, we recall from Section~\ref{subsec:trunc-div-sym} that for $N=N_p(V)=n(p-1)$, there is a perfect pairing 
\[ T_pS^r V \oo T_p S^{N-r}V \lra \det(V)^{p-1} \simeq \kk,\]
where the last isomorphism is as $\SL$-representations. Taking characters, we get $(h'_r)^{\vee} = h'_{N-r}$, hence
\[\left[H^1(\PP,F^p\mc{R} \oo S^{d-p}\mc{R}(e))\right] = h_{d-p}\cdot h'_{e+p}-h_{d-p-1}\cdot h'_{e+p} = h'_{d-p}\cdot h'_{e+p}-h'_{d-p-1}\cdot h'_{e+p}=s'_{(e+p,d-p)},\]
concluding our proof.
\end{proof}

We end this section with a discussion of the irreducibility of the cohomology modules from Theorem~\ref{thm:chars-d<2p}, and of the connections of our work with \cite{liu-polo}. Consider the special case when $e=d-1$:
\[ \left[H^1(\PP,D^d\mc{R}(d-1))\right] = s'_{(d+p-1,d-p)}.\] 
The partition $\lambda = (d+p-1,d-p)$ satisfies \defi{Carter's criterion}, meaning that the highest power of $p$ dividing the hook lengths is constant for each column of the Young diagram of $\lambda$. If we write $l_{\mu}$ for the character of the irreducible $\SL$-module of highest weight $\mu$, then \cite{walker}*{Theorem~4.4} and \cite{doty-walker}*{Corollary~2.11} imply that $s'_{\ll}=l_{\mu}$, where $\mu$ is the partition obtained from $\ll$ by sliding as far as possible the nodes in the Young diagram of $\ll$ along a line of slope $1/(p-1)$ in the downward direction (see \cite{james-kerber}*{Section~6.3} for some examples, noting that they use the transposed convention for partitions from ours). In particular, this shows that $H^1(\PP,D^d\mc{R}(d-1))$ is an irreducible $\SL$-module for $p\leq d<2p$, which is proved by different methods in \cite{liu-polo}*{Section~1.3}. Moreover, one has \[ \mu = (d-1,p-1,d-p+1)\text{ if }p\leq d\leq 2p-2,\quad\text{and}\quad \mu = (2p-2,p-1,p-1,1)\text{ if }d=2p-1,\]
recovering the highest weight calculation from \cite{liu-polo}*{Proposition~1.3.1}. The irreducibility of the cohomology modules in Theorem~\ref{thm:chars-d<2p} can fail for general values of $e$, as the following example shows.
\begin{example}\label{ex:reducible-coh}
 Consider the case when $n=4$, $p=5$, $d=6$ and $e=9$. We have $\ll=(e+p,d-p)=(14,1)$, which does not satisfy Carter's criterion: the hook lengths along the first column are~$15$ (divisible by $p$) and~$1$ (not divisible by $p$). By Theorem~\ref{thm:chars-d<2p}, we have
\[ \left[H^1(\PP,D^6\mc{R}(9))\right] = s'_{(14,1)}=l_{(5,4,4,2)}+l_{(4,4,4,3)},\]
where the last equality follows by consulting the database \cite{luebeck}, or the Weyl Modules Gap package \cite{doty-gap}.
\end{example}

\section{Characters for $n=3$}\label{sec:char-n=3}

In this section we specialize our discussion to the case $n=3$, with the goal of proving the recursive description of characters in Theorem~\ref{thm:recursion-sl3}, as well as the characterization of highest weights in Theorem~\ref{thm:sl3-highest-weight-h1} and the non-recursive formula from Theorem~\ref{thm:sl3-p=2-characters} in characteristic two. We write
\[ A = \bb{Z}[x_1,x_2,x_3]/\langle x_1x_2x_3-1\rangle \simeq \bb{Z}[x_1^{\pm 1},x_2^{\pm 1}]\]
and recall that characters of $\SL_3$-representations give rise to functions in $A$, invariant under the action of the symmetric group $\mf{S}_3$ under coordinate permutations. For every such non-zero function $f\in A$, we write $\op{hw}(f)=(a,b,0)=(a-b)\omega_1+b\omega_2\in\bb{Z}^3/\bb{Z}(1,1,1)$ if 
\[f=\gamma\cdot x_1^a\cdot x_2^b + (\text{lower terms}) \quad\in \bb{Z}[x_1^{\pm 1},x_2^{\pm 1}],\]
where $0\neq \gamma\in\bb{Z}$ and the terms are ordered lexicographically. Notice that because of the $\mf{S}_3$-symmetry, $\op{hw}(f)$ must be \defi{dominant}, that is $a\geq b\geq 0$. We have for instance
\[ \op{hw}(s_{(a,b)}) = (a-b)\omega_1+b\omega_2,\quad \op{hw}(h_a) = a\omega_1.\]
In general we will say that $\ll=(a,b,c)$ is a weight in $f$ if $x_1^a x_2^b x_3^c = x_1^{a-c}x_2^{b-c}$ appears with non-zero coefficient in the monomial expansion of $f$.

\begin{lemma}\label{lem:wts<q00}
 If $f\in A$ is $\mf{S}_3$-invariant and $\op{hw}(f)<r\omega_1$ for some $r>0$ then $\op{hw}(f^{\vee})<r\omega_1$.
\end{lemma}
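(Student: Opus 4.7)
The plan is to exploit the fact that $A^{\mathfrak{S}_3}$ is a polynomial ring on two generators $e_1, e_2$ with $e_2 = e_1^\vee$, so the duality $\vee$ on invariants amounts to swapping these two generators. First I would verify that $A^{\mathfrak{S}_3} = \mathbb{Z}[e_1, e_2]$, where $e_1 = x_1 + x_2 + x_3$ and $e_2 = x_1 x_2 + x_1 x_3 + x_2 x_3$ are algebraically independent, by combining the freeness of $\mathbb{Z}[x_1, x_2, x_3]$ over $\mathbb{Z}[e_1, e_2, e_3]$ (of rank $|\mathfrak{S}_3| = 6$) with the identification $A = \mathbb{Z}[x_1, x_2, x_3]/\langle e_3 - 1\rangle$. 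The equality $e_2 = e_1^\vee$ is immediate from $x_1 x_2 x_3 = 1$. As a consequence, any $\mathfrak{S}_3$-invariant $f \in A$ admits a unique expansion $f = \sum_{i, j \geq 0} c_{ij}\, e_1^i e_2^j$, and $f^\vee = \sum c_{ij}\, e_1^j e_2^i$.

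The next step is to read off $\op{hw}$ from this expansion. The lex-leading monomial of $e_1$ in $\mathbb{Z}[x_1^{\pm 1}, x_2^{\pm 1}]$ is $x_1$ and that of $e_2$ is $x_1 x_2$, both with coefficient $1$, so the leading monomial of $e_1^i e_2^j$ is $x_1^{i+j} x_2^j$, giving $\op{hw}(e_1^i e_2^j) = i\omega_1 + j\omega_2 = (i+j, j, 0)$. Since the map $(i, j) \mapsto (i+j, j)$ is injective on $\mathbb{Z}_{\geq 0}^2$, distinct basis elements have distinct leading monomials, and no subleading term of any $e_1^{i'} e_2^{j'}$ can produce a monomial lex-larger than $x_1^{i'+j'} x_2^{j'}$. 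Consequently, no cancellation occurs at the top, and
\[
\op{hw}(f) = (i_0 + j_0,\, j_0,\, 0),
\]
where $(i_0, j_0)$ is the pair maximizing $(i+j, j)$ lexicographically subject to $c_{ij} \neq 0$.

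To close, I note that the dominance condition $j_0 \geq 0$ turns the hypothesis $\op{hw}(f) < r\omega_1 = (r, 0, 0)$ into the single inequality $i_0 + j_0 < r$. Applying the same analysis to $f^\vee$ yields $\op{hw}(f^\vee) = (i_1 + j_1,\, i_1,\, 0)$ for some pair $(i_1, j_1)$ with $c_{i_1 j_1} \neq 0$, and since both optimizations begin by maximizing $i + j$ over $\{(i, j) : c_{ij} \neq 0\}$, we get $i_1 + j_1 = i_0 + j_0 < r$, hence $\op{hw}(f^\vee) < r\omega_1$. The one place where care is required — and what I would flag as the main potential obstacle if overlooked — is the ``no cancellation at the top'' step: it relies on the injectivity of $(i, j) \mapsto (i+j, j)$ combined with the strict lex-dominance of leading over subleading terms inside each $e_1^i e_2^j$. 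Once that is in hand, the lemma reduces to the observation that the two maxima in question share their first coordinate.
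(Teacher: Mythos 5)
Your argument is correct, but it takes a genuinely different and heavier route than the paper's. The paper handles this with a two-line orbit computation: if a dominant weight $(a,b,0)$ with $a\geq r$ occurs in $f$, then by $\mf{S}_3$-invariance so does $(0,b,a)$, hence $f^{\vee}$ contains $(0,-b,-a)\equiv(a,a-b,0)\geq r\omega_1$; in other words, negating a weight and re-sorting it into dominant form preserves the first coordinate, so $\op{hw}(f)<r\omega_1$ and $\op{hw}(f^{\vee})<r\omega_1$ imply one another. You instead pass through the structure of the invariant ring, $A^{\mf{S}_3}=\bb{Z}[e_1,e_2]$ with ${}^{\vee}$ swapping $e_1\leftrightarrow e_2$, and then do a leading-monomial (triangularity) analysis. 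That costs a nontrivial classical input but buys more: an explicit description of $\op{hw}(f)$ and $\op{hw}(f^{\vee})$ from the $e_1,e_2$-expansion, and in particular the equality of the first coordinates of the two highest weights, of which the lemma is an immediate consequence. Your lex-order bookkeeping (multiplicativity of lex on Laurent monomials, injectivity of $(i,j)\mapsto(i+j,j)$, no cancellation at the top, and the reduction of the hypothesis to $i_0+j_0<r$ via dominance) is all sound.

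One soft spot to tighten: the justification offered for $A^{\mf{S}_3}=\bb{Z}[e_1,e_2]$ --- freeness of $\bb{Z}[x_1,x_2,x_3]$ over $\bb{Z}[e_1,e_2,e_3]$ of rank $6$ together with $A=\bb{Z}[x_1,x_2,x_3]/\langle e_3-1\rangle$ --- does not by itself prove that every invariant of $A$ lifts to $\bb{Z}[e_1,e_2,e_3]$, since taking $\mf{S}_3$-invariants need not commute with the specialization $e_3\mapsto 1$ a priori (it gives algebraic independence and freeness, not surjectivity). The statement is of course true and standard (it is the description of the representation ring of $\SL_3$ by fundamental characters; alternatively $H^1(\mf{S}_3,\bb{Z}[x_1,x_2,x_3])=0$ by Shapiro's lemma since the polynomial ring is a sum of permutation modules), and it also follows from exactly the triangularity you set up: an invariant with lex-leading term $\gamma\, x_1^a x_2^b$ has $a\geq b\geq 0$, and subtracting $\gamma\, e_1^{a-b}e_2^{b}$ strictly lowers the highest weight, so one can induct. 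So this is a citation/fill-in issue rather than a genuine gap, but as written that step deserves either a reference or the extra induction.
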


\begin{proof} Suppose by contradiction that $f$ contains the weight $(a,b,0)$ with $a\geq r$, and assume without loss of generality that $a\geq b\geq 0$. By $\mf{S}_3$-symmetry, $f$ also contains $(0,b,a)$, hence $f^{\vee}$ contains $(0,-b,-a)=(a,a-b,0)\geq r\omega_1$, a contradiction.
\end{proof}

For our proofs we will need a number of basic identities in the ring $A$:
\begin{equation}\label{eq:basic-schur} s_{(a,b)}^{\vee} = s_{(a,a-b)},\quad h_a = s_{(a)},\quad h_a^{\vee} = s_{(a,a)}.
\end{equation}
Moreover, for truncated Schur functions we have using results in Section~\ref{subsec:trunc-div-sym}
\begin{equation}\label{eq:basic-trunc} h^{(q)}_a = h_a\text{ for }a<q,\quad \left(h^{(q)}_a\right)^{\vee} = h^{(q)}_{3q-3-a}. \end{equation}

\begin{lemma}\label{lem:trunc-schur=schur}
 Suppose that $q=p^k$, $2q-2\leq a\leq 3q-3$ and $0\leq b<q$. We have
 \[ s^{(q)}_{(a,b)} = s_{(3q-3-a+b,3q-3-a)}.\]
 In particular, if $tq\leq d,e\leq (t+1)q-2$ then
 \[ s^{(q)}_{(e-1+(2-t)q,d-tq)} = s_{(q+d-e-2,(t+1)q-e-2)} = s_{(q+d-e-2,d-tq)}^{\vee}.\]
\end{lemma}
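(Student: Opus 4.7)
The plan is to exploit the duality $(h^{(q)}_c)^\vee = h^{(q)}_{3q-3-c}$ recorded in \eqref{eq:basic-trunc} to trade truncated complete symmetric functions of \emph{large} index for ordinary two-row Schur functions of \emph{small} index, and then to finish with Pieri's rule. Under the hypothesis $2q-2 \leq a \leq 3q-3$, both $3q-3-a$ and $3q-4-a$ lie in $[-1,q-1]$, so combining \eqref{eq:basic-trunc} with \eqref{eq:basic-schur} I get
\[
h^{(q)}_a = (h^{(q)}_{3q-3-a})^\vee = h_{3q-3-a}^\vee = s_{(3q-3-a,\,3q-3-a)},
\]
and analogously $h^{(q)}_{a+1} = s_{(3q-4-a,\,3q-4-a)}$ (with the convention $s_{(-1,-1)}=0$ when $a=3q-3$). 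Since $b \leq q-1$, I also have $h^{(q)}_b = h_b$ and $h^{(q)}_{b-1}=h_{b-1}$, so with $c := 3q-3-a$ the defining identity becomes
\[
s^{(q)}_{(a,b)} = h_b\,s_{(c,c)} - h_{b-1}\,s_{(c-1,c-1)}.
\]

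Next I would apply Pieri's rule to each product. A horizontal strip added to $(c,c,0)$ must leave the middle row of length $c$ unchanged, so the admissible three-row shapes are $(c+b-i,\,c,\,i)$ with $0 \leq i \leq \min(b,c)$, contributing as $\SL_3$-characters the sum $\sum_{i=0}^{\min(b,c)} s_{(c+b-2i,\,c-i)}$. The parallel expansion of $h_{b-1}\,s_{(c-1,c-1)}$ is obtained by replacing $(c,b)$ with $(c-1,b-1)$, and the reindexing $i\mapsto i-1$ shows that all terms cancel in the difference except the $i=0$ term $s_{(c+b,\,c)} = s_{(3q-3-a+b,\,3q-3-a)}$. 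This proves the first claim; the boundary cases $b=0$ and $a=3q-3$ are absorbed by the conventions $h_{-1}=0$ and $s_{(-1,-1)}=0$.

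For the ``In particular'' assertion I would simply substitute $a = e-1+(2-t)q$ and $b = d-tq$. The ranges $tq \leq d,e \leq (t+1)q-2$ give $a \in [2q-1,\,3q-3]$ and $b \in [0,q-2]$, so the hypothesis applies, and direct arithmetic yields $3q-3-a = (t+1)q-e-2$ together with $3q-3-a+b = q+d-e-2$, giving the first equality. The second equality $s_{(q+d-e-2,\,(t+1)q-e-2)} = s_{(q+d-e-2,\,d-tq)}^\vee$ then follows from the identity $s_{(\alpha,\beta)}^\vee = s_{(\alpha,\alpha-\beta)}$ in \eqref{eq:basic-schur}, since $(q+d-e-2) - (d-tq) = (t+1)q-e-2$. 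The only substantive step is the Pieri computation, and the main thing to watch is that the cancellation is complete (it is, because the two sums differ by exactly one term at the top); everything else is bookkeeping.
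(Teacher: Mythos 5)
Your proposal is correct and follows essentially the same route as the paper: convert $h^{(q)}_a$ and $h^{(q)}_{a+1}$ via the duality $(h^{(q)}_c)^{\vee}=h^{(q)}_{3q-3-c}$ into the two-row Schur functions $s_{(3q-3-a,3q-3-a)}$ and $s_{(3q-4-a,3q-4-a)}$, finish with Pieri's rule, and then substitute for the ``in particular'' part. The only difference is presentational: the paper rewrites $s_{(3q-4-a,3q-4-a)}$ as $s_{(3q-3-a,3q-3-a,1)}$ and quotes the resulting Pieri cancellation in one line, whereas you spell out the horizontal-strip expansion and the telescoping explicitly, which is a fine (and correctly executed) elaboration of the same argument.
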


\begin{proof} By definition,
\[ s^{(q)}_{(a,b)} = h^{(q)}_a h^{(q)}_b - h^{(q)}_{a+1} h^{(q)}_{b-1}.\]
Using \eqref{eq:basic-trunc} and $b<q$, we have $h^{(q)}_b=h_b$ and $h^{(q)}_{b-1}=h_{b-1}$. Moreover, since $3q-3-a < q$ we get using \eqref{eq:basic-trunc}
\[ h^{(q)}_a = \left(h^{(q)}_{3q-3-a}\right)^{\vee} = h_{3q-3-a}^{\vee} = s_{(3q-3-a,3q-3-a)},\]
and similarly $h^{(q)}_{a+1} = s_{(3q-4-a,3q-4-a)}=s_{(3q-3-a,3q-3-a,1)}$. It follows that
\[ s^{(q)}_{(a,b)} = s_{(3q-3-a,3q-3-a)}\cdot h_b - s_{(3q-3-a,3q-3-a,1)}\cdot h_{b-1} = s_{(3q-3-a+b,3q-3-a)},\]
where the last equality follows from Pieri's rule. For the last assertion of the lemma, we take $a=e-1+(2-t)q$ and $b=d-tq$, and note that $3q-3-a=(t+1)q-e-2$, so
\[s^{(q)}_{(e-1+(2-t)q,d-tq)} = s_{(q+d-e-2,(t+1)q-e-2)} \overset{\eqref{eq:basic-schur}}{=} s_{(q+d-e-2,d-tq)}^{\vee},\]
as desired.
\end{proof}

For the arguments below, it will also be useful to reinterpret the conditions in part (2) of Theorem~\ref{thm:sl3-highest-weight-h1} in terms of the $p$-adic expansions
\[ d = (d_k\cdots d_0)_p\quad\text{and}\quad e=(e_k\cdots e_0)_p.\]
The existence of $q'$ is equivalent to the fact that for some $s\leq k$ we have
\[ (d_k\cdots d_s)_p = (e_k\cdots e_s)_p =: m,\quad d_s=e_s\neq 0,\quad\text{and}\quad e_i\neq p-1\text{ for some }i<s,\]
in which case $d'=(d_{s-1}\cdots d_0)_p$, $e'=(e_{s-1}\cdots e_0)_p$. The existence of such $s$ is equivalent to the assertion that $tq\leq d,e\leq tq-2$ for $q=p^k$ and some $1\leq t<p$ (that is, the conditions should be satisfied at least for $s=k$).

\begin{proof}[Proof of Theorems~\ref{thm:recursion-sl3} and~\ref{thm:sl3-highest-weight-h1}]
Our strategy is to prove Theorems~\ref{thm:recursion-sl3} and~\ref{thm:sl3-highest-weight-h1} in parallel, by induction on $d$:
\begin{enumerate}[leftmargin=5\parindent]
 \item[{\bf Step 1.}] Verify Theorems~\ref{thm:recursion-sl3} and~\ref{thm:sl3-highest-weight-h1} for $d<p$ (the base case of the induction).
 \item[{\bf Step 2.}] Show that Theorem~\ref{thm:recursion-sl3} for $d$ and Theorem~\ref{thm:sl3-highest-weight-h1} for $\tilde{d}<d$ implies Theorem~\ref{thm:sl3-highest-weight-h1} for $d$.
 \item[{\bf Step 3.}] Show that Theorem~\ref{thm:sl3-highest-weight-h1} for $\tilde{d}<d$ implies Theorem~\ref{thm:recursion-sl3} for $d$.
\end{enumerate}

\medskip

\noindent{\bf Step 1.} Suppose that $tq\leq d<(t+1)q$ in Theorem~\ref{thm:recursion-sl3}. We begin by noting that the vanishing $h^1(d,e)=0$ for $e>(t+1)q-2$ follows from Theorem~\ref{thm:reg-Dd-R}. Similarly, for $e<tq$ the conclusion $h^1(d,e)=s_{(d-1,e)}=-\chi(d,e)$ is equivalent to the vanishing $h^0(d,e)=0$, which in turn is equivalent to $h^1(e,d)=0$. If we let $t'<t$ so that $t'q\leq e<(t'+1)q$ then $d\geq tq > (t'+1)q-2$, and the conclusion follows again from Theorem~\ref{thm:reg-Dd-R}. This proves the first two relations in Theorem~\ref{thm:recursion-sl3} for an arbitrary value of $d$. 

Moreover, if $d<p$ then $t=d$ and $q=1$, so the first two cases in Theorem~\ref{thm:recursion-sl3} are the only ones that can occur ($t=tq\leq e\leq(t+1)q-2=t-1$ is impossible), proving that Theorem~\ref{thm:recursion-sl3} holds when $d<p$. In particular, if $d<p$ then $h^1(d,e)\neq 0$ only when $d>e$, in which case $h^1(d,e)=s_{(d-1,e)}$. This implies 
\begin{equation}\label{eq:hw-h1de-special} 
\op{hw}(h^1(d,e))=\op{hw}(s_{(d-1,e)}) = (d-1,e,0) = (d-1-e)\omega_1+e\omega_2,
\end{equation}
proving Theorem~\ref{thm:sl3-highest-weight-h1} for $d<p$.

\medskip

\noindent{\bf Step 2.} If $e<tq$ then we have $d>e$, and by applying Theorem~\ref{thm:recursion-sl3} we get again \eqref{eq:hw-h1de-special}, as desired. Since $h^1(d,e)\neq 0$, it remains to consider the case
\begin{equation}\label{eq:bounds-for-e}
    tq \leq e \leq (t+1)q-2.
\end{equation} 
By Theorem~\ref{thm:recursion-sl3}, we know that \eqref{eq:main-rec-sl3} holds. Note that
\[ 0\leq q(t+1)-d-2,q(t+1)-e-2\leq q-2,\]
hence by applying the inductive hypothesis for Theorem~\ref{thm:sl3-highest-weight-h1}, we have that $h^0(q(t+1)-d-2,q(t+1)-e-2)=h^1(q(t+1)-e-2,q(t+1)-d-2)$ has highest weight smaller than $q\omega_1$. It follows from Lemma~\ref{lem:wts<q00} that 
\[\op{hw}\left(F^q(h_{t-2}^{\vee})\cdot h^0(q(t+1)-d-2,q(t+1)-e-2)^{\vee}\right) < (t-2)q\omega_2 + q\omega_1 <(t-1)q\omega_2 = \op{hw}\left(F^q(h_{t-1}^{\vee})\right),\]
hence the last term in \eqref{eq:main-rec-sl3} does not contribute to the highest weight of $h^1(d,e)$.

If $d>e$, then $d-tq>e-tq$ and by induction we obtain
\[ \op{hw}\left(F^q(h_t^{\vee})\cdot h^1(d-tq,e-tq)\right)=tq\omega_2 + (d-1-e)\omega_1+(e-tq)\omega_2 = (d-1,e,0).\]
Moreover, combining Lemma~\ref{lem:trunc-schur=schur} with the inequality $q+d-e-2\leq d-2-(t-1)q$, we have
\[ \op{hw}\left(F^q(h_{t-1}^{\vee})\cdot s^{(q)}_{(e-1+(2-t)q,d-tq)} \right) \leq (t-1)q\omega_2 + (d-2-(t-1)q)\omega_2 = (d-2,d-2,0)<(d-1,e,0).\]
It follows that $\op{hw}(h^1(d,e))=(d-1,e,0)$, proving case (1) of Theorem~\ref{thm:sl3-highest-weight-h1}.

Suppose now that $d\leq e$, and let $\tilde{d}=d-tq$, $\tilde{e}=e-tq$. Consider first the case when $h^1(\tilde{d},\tilde{e})\neq 0$, and use the induction hypothesis to write
\[ \tilde{d}=\tilde{m}q'+d',\quad \tilde{e}=\tilde{m}q'+e',\quad p\nmid\tilde{m},\quad 0\leq d',e'\leq q'-2,\]
for $q'=p^s<q$ minimal. Taking $m=\tilde{m}+tq/q'$, we get
\[ d=mq'+d',\quad e=mq'+e',\quad p\nmid m,\quad 0\leq d',e'\leq q'-2,\]
and by comparing the $p$-adic expansions of $d,e$ with those of $\tilde{d},\tilde{e}$, it is clear that $q'$ is minimal for which the above properties hold. Moreover, we have by induction that
\[ \op{hw}\left(F^q(h_t^{\vee})\cdot h^1(\tilde{d},\tilde{e})\right)=tq\omega_2 + d'\omega_1+(\tilde{e}-2e'-2)\omega_2 = d'\omega_1+(e-2e'-2)\omega_2.\]
To see that this must be the highest weight in $h^1(d,e)$, it suffices to note that $q+d-e-2\leq q-2$, hence Lemma~\ref{lem:trunc-schur=schur} implies as before
\[ \op{hw}\left(F^q(h_{t-1}^{\vee})\cdot s^{(q)}_{(e-1+(2-t)q,d-tq)} \right) \leq (t-1)q\omega_2 + (q-2)\omega_2 < tq\omega_2 = \op{hw}\left(F^q(h_t^{\vee})\right) \leq \op{hw}\left(F^q(h_t^{\vee})\cdot h^1(\tilde{d},\tilde{e})\right).\]

Finally, consider the case when $h^1(\tilde{d},\tilde{e})=0$. By \eqref{eq:main-rec-sl3} and Lemma~\ref{lem:trunc-schur=schur} we have
\[
\begin{aligned}
\op{hw}(h^1(d,e)) &= \op{hw}\left(F^q(h_{t-1}^{\vee})\cdot s^{(q)}_{(e-1+(2-t)q,d-tq)} \right) \\
&= (t-1)q\omega_2 + (q+d-e-2,(t+1)q-e-2,0) = (d+tq-e-2,2tq-e-2,0).
\end{aligned}\]
It is then enough to show that $q',m,d',e'$ in part (2) of Theorem~\ref{thm:sl3-highest-weight-h1} are given by $q'=q$, $m=t$, $d'=\tilde{d}$, $e'=\tilde{e}$. Notice that $0\leq \tilde{d}\leq\tilde{e}\leq q-2$ by \eqref{eq:bounds-for-e}. If $\tilde{d}=0$ then $d=tq$ has only one non-zero digit in the $p$-adic expansion, so there is nothing to prove. Suppose $\tilde{d}\geq 1$ and let $1\leq\tilde{t}<p$, $\tilde{q}=p^r$ such that $\tilde{t}\tilde{q}\leq \tilde{d}< (\tilde{t}+1)\tilde{q}$. By Theorem~\ref{thm:main} and Theorem~\ref{thm:FV-to-PV}, we get from $h^1(\tilde{d},\tilde{e})=0$ that $\tilde{e}\geq(\tilde{t}+1)\tilde{q}-1$. If $\tilde{e}=(\tilde{t}+1)\tilde{q}-1$, then all but its leading digit in the $p$-adic expansion are equal to $(p-1)$, while if $\tilde{e}\geq (\tilde{t}+1)\tilde{q}$ then its leading digit differs from that of $\tilde{d}$. Comparing the $p$-adic expansions of $d,e$ with those of $\tilde{d},\tilde{e}$, we conclude that $q'=q$, which then forces $m=t$, $d'=\tilde{d}$, $e'=\tilde{e}$, as desired.

\medskip

\noindent{\bf Step 3.} We have already seen in Step 1 that Theorem~\ref{thm:recursion-sl3} is true when $e>(t+1)q-2$ or $e<tq$. We will therefore assume from now on that \eqref{eq:bounds-for-e} holds. We recall the filtration \eqref{eq:filtration-DdE} of $D^d\mc{R}$, and observe that since $\rk(\mc{R})=2$, we have $N_q(\mc{R})=2(q-1)$ and therefore $T_qD^{d-iq}\mc{R}=0$ if $d-iq > 2q-2$, or equivalently, if $i\leq t-2$. It follows that \eqref{eq:filtration-DdE} reduces to a short exact sequence
\begin{equation}\label{eq:ses-DdR-n=3}
    0 \lra F^q(D^{t-1}\mc{R}) \oo T_qD^{d-(t-1)q}\mc{R} \lra D^d\mc{R} \lra F^q(D^t\mc{R}) \oo D^{d-tq}\mc{R} \lra 0,
\end{equation}
where for the last term we have used $T_qD^{d-tq}\mc{R}=D^{d-tq}\mc{R}$. Our calculation will follow then by twisting \eqref{eq:ses-DdR-n=3} by $\mc{O}_{\PP}(e-1)$ and proving the following:
\begin{equation}\label{eq:Hi-left-term}
\begin{aligned}
    \left[H^1(\PP,F^q(D^{t-1}\mc{R}) \oo T_qD^{d-(t-1)q}\mc{R}(e-1)) \right] &= F^q(h_{t-2}^{\vee})\cdot h^0(q(t+1)-d-2,q(t+1)-e-2)^{\vee}, \\ H^2(\PP,F^q(D^{t-1}\mc{R}) \oo T_qD^{d-(t-1)q}\mc{R}(e-1))&=0.
\end{aligned}
\end{equation}

\begin{equation}\label{eq:H1-right-term}
\begin{aligned}
\left[H^0(\PP,F^q(D^t\mc{R}) \oo D^{d-tq}\mc{R}(e-1)) \right] &= F^q(h_t^{\vee})\cdot h^0(d-tq,e-tq)\\
    \left[H^1(\PP,F^q(D^t\mc{R}) \oo D^{d-tq}\mc{R}(e-1)) \right] &= F^q(h_t^{\vee})\cdot h^1(d-tq,e-tq) + F^q(h_{t-1}^{\vee})\cdot s_{(q+d-e-2,d-tq)}^{\vee}.
\end{aligned}
\end{equation}
and the vanishing of the connecting homomorphism
\begin{equation}\label{eq:connecting=0}
H^0(\PP,F^q(D^t\mc{R}) \oo D^{d-tq}\mc{R}(e-1)) \overset{0}{\lra} H^1(\PP,F^q(D^{t-1}\mc{R}) \oo T_qD^{d-(t-1)q}\mc{R}(e-1)).
\end{equation}

To prove the above properties, we start by noting that the $\SL$-isomorphisms $\bw^3 V\simeq \kk$ and $\bw^2 V\simeq V^{\vee}$ allow us to write the Kosul resolution of $\mc{R}$ as
\begin{equation}\label{eq:Kos-res-R-short}
    0 \lra \mc{O}_{\PP}(-2)\lra V^{\vee}(-1)\lra\mc{R}\lra 0. 
\end{equation}
In what follows we use freely the identification between the divided and symmetric power functors $D^i\simeq S^i$ for $i<p$.

\begin{lemma}\label{lem:Hi-left-term}
    The relations~\eqref{eq:Hi-left-term} hold.
\end{lemma}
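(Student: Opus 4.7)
\textbf{Proof plan for Lemma~\ref{lem:Hi-left-term}.} The idea is to reduce the truncated divided power to an ordinary symmetric power of $\mc{R}^{\vee}$, resolve the Frobenius twist using Theorem~\ref{thm:sym-pows-R}, and finally use Serre duality on $\PP^2$ to identify the resulting $H^2$ with an $H^0$. To begin, set $b=d-(t-1)q$ and $c=(t+1)q-d-2$, so that $N_q(\mc{R})-b=c$ with $N_q(\mc{R})=2(q-1)$. Since $\rk(\mc{R})=2$ and $\det(\mc{R})\simeq\mc{O}_{\PP}(-1)$, the duality \eqref{eq:duality-TqS-TqD} gives $T_qD^b\mc{R}\simeq T_qS^c(\mc{R}^{\vee})(1-q)$. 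The hypothesis $tq\leq d\leq(t+1)q-2$ (the remaining case $d=(t+1)q-1$ makes both sides of \eqref{eq:Hi-left-term} vanish) forces $0\leq c\leq q-2<q$, and the Koszul description \eqref{eq:K-res-TqSc} collapses to $T_qS^c(\mc{R}^\vee)=S^c(\mc{R}^\vee)$. Altogether,
\[T_qD^{d-(t-1)q}\mc{R}(e-1)\;\simeq\;S^c(\mc{R}^{\vee})(e-q).\]

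Next, apply the exact functor $F^q$ to the two-term resolution of $\Sym^{t-1}\mc{R}=D^{t-1}\mc{R}$ furnished by Theorem~\ref{thm:sym-pows-R} to obtain
\[0\to F^q(\bb{S}_{(t-1,t-1,1)}V)\oo\mc{O}_{\PP}(-qt)\to F^q(\bb{S}_{(t-1,t-1)}V)\oo\mc{O}_{\PP}(-q(t-1))\to F^q(D^{t-1}\mc{R})\to 0.\]
Tensoring with the locally free $S^c(\mc{R}^{\vee})(e-q)$ preserves exactness, yielding $0\to\mc{G}_1\to\mc{G}_0\to\mc{A}\to 0$, where $\mc{G}_0$ involves the twist $m=e-qt\in[0,q-2]$ of $S^c(\mc{R}^\vee)$ and $\mc{G}_1$ involves $m'=e-q(t+1)\in[-q,-2]$. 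By Lemma~\ref{lem:Sd-Rvee-0-reg}, $S^c(\mc{R}^\vee)$ is $0$-regular, so $H^j(\PP,S^c(\mc{R}^\vee)(m))=0$ for $j\geq 1$. The long exact sequence in cohomology then immediately yields $H^2(\mc{A})=0$ (the second assertion of \eqref{eq:Hi-left-term}), together with an isomorphism
\[H^1(\mc{A})\;\simeq\;H^2(\mc{G}_1)\;=\;F^q(\bb{S}_{(t-1,t-1,1)}V)\oo H^2\!\left(\PP,S^c(\mc{R}^{\vee})(m')\right).\]

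To finish, use $(S^c(\mc{R}^{\vee}))^{\vee}\simeq D^c\mc{R}$ from \eqref{eq:S-D-E-dual} and $\omega_{\PP}\simeq\mc{O}_{\PP}(-3)$, so Serre duality on $\PP\simeq\PP^2$ gives
\[H^2\!\left(\PP,S^c(\mc{R}^{\vee})(m')\right)^{\vee}\;\simeq\;H^0\!\left(\PP,D^c\mc{R}(c'-1)\right),\qquad c':=q(t+1)-e-2,\]
so that the character of this $H^2$ equals $h^0(c,c')^{\vee}$. Since for $\SL_3$ a column of height three is trivial, we have $[\bb{S}_{(t-1,t-1,1)}V]=s_{(t-2,t-2)}=h_{t-2}^{\vee}$ in $A$, and assembling the pieces produces the claimed formula. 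The main obstacle is simply the bookkeeping: keeping the various twists straight, and verifying that degenerate cases (e.g.\ $t=1$, where the resolution collapses and $F^q(h_{-1}^{\vee})=0$, or $c=-1$) are absorbed consistently by the conventions. No genuinely new ingredient beyond the $0$-regularity of $S^c(\mc{R}^\vee)$, Serre duality on $\PP^2$, and the resolution of $\Sym^a\mc{R}$ is needed.
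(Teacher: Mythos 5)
Your proposal is correct and follows essentially the same route as the paper: identify $T_qD^{d-(t-1)q}\mc{R}$ with $S^{(t+1)q-d-2}(\mc{R}^{\vee})(1-q)$ via \eqref{eq:duality-TqS-TqD}, tensor with the Frobenius twist of the two-term resolution of $S^{t-1}\mc{R}$ (your resolution from Theorem~\ref{thm:sym-pows-R} coincides, as a complex of $\SL_3$-equivariant line bundles, with the paper's symmetric power of \eqref{eq:Kos-res-R-short}), kill the relevant cohomology of the middle term by $0$-regularity of $S^c(\mc{R}^{\vee})$ and $e\geq tq$, and convert the surviving $H^2$ into $h^0(q(t+1)-d-2,q(t+1)-e-2)^{\vee}$ by Serre duality. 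Your explicit handling of the degenerate cases $d=(t+1)q-1$ and $t=1$ is consistent with the paper's conventions, so no gap remains.
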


\begin{proof} We get from \eqref{eq:Kos-res-R-short} a short exact sequence
\begin{equation}\label{eq:ses-res-St-1-R} 0 \lra S^{t-2}V^{\vee}(-t) \lra S^{t-1}V^{\vee}(-t+1)\lra S^{t-1}\mc{R} \lra 0.
\end{equation}
We also have using \eqref{eq:iso-TqD-TqS-R}, $N_q(\mc{R})=2q-2$ and $(t+1)q-d-2<q$, that
\[T_qD^{d-(t-1)q}\mc{R}\simeq T_qS^{(t+1)q-d-2}\mc{R}^\vee (-q+1)\simeq S^{(t+1)q-d-2}\mc{R}^\vee(-q+1).\]
Applying $F^q$ to \eqref{eq:ses-res-St-1-R} and tensoring with $T_qD^{d-(t-1)q}\mc{R}(e-1)$ we get a short exact sequence
\[0\lra W_1\oo S^{(t+1)q-d-2}\mc{R}^\vee(e-q(t+1)) \lra W_0\oo S^{(t+1)q-d-2}\mc{R}^\vee(e-qt) \lra \mc{F} \lra 0  \]
where $W_1=F^q(S^{t-2}V^{\vee})$, $W_0=F^q(S^{t-1}V^{\vee})$, and $\mc{F} = F^q(D^{t-1}\mc{R}) \oo T_qD^{d-(t-1)q}\mc{R}(e-1)$. Using Lemma~\ref{lem:Sd-Rvee-0-reg} and $e\geq qt$, we obtain
\[ H^1(\PP,W_0\oo S^{(t+1)q-d-2}\mc{R}^\vee(e-qt)) = H^2(\PP,W_0\oo S^{(t+1)q-d-2}\mc{R}^\vee(e-qt)) = 0,\]
hence $H^2(\PP,\mc{F})=0$ and
\[
\begin{aligned}H^1(\PP,\mc{F}) &\simeq W_1\oo H^2(\PP,S^{(t+1)q-d-2}\mc{R}^\vee(e-q(t+1))) \\ &\simeq W_1 \oo H^0(\PP,D^{(t+1)q-d-2}\mc{R}((t+1)q-e-3))^{\vee}
\end{aligned}\]
where the last isomorphism is by Serre duality. Taking characters we get \eqref{eq:Kos-res-R-short}, as desired.
\end{proof}

\begin{lemma}\label{lem:H1-right-term}
    The character formula~\eqref{eq:H1-right-term} holds.
\end{lemma}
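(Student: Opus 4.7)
The plan is to resolve $F^q(D^t\mc{R})$ using the length-one Koszul-type resolution of Theorem~\ref{thm:sym-pows-R} (note that since $t<p$, we have $D^t\mc{R}\simeq S^t\mc{R}$), and then read off the cohomology of $\mc{G}:=F^q(D^t\mc{R})\oo D^{d-tq}\mc{R}(e-1)$ from the resulting two-term complex. Applying $F^q$ (exact on locally free sheaves by \eqref{eq:Fq-tensor-ops}) to the sequence
\[ 0 \lra \bb{S}_{(t,t,1)}V\oo\mc{O}_\PP(-t-1) \lra \bb{S}_{(t,t)}V\oo\mc{O}_\PP(-t) \lra S^t\mc{R} \lra 0\]
and tensoring with $D^{a}\mc{R}(e-1)$, where I write $a:=d-tq$, would give the short exact sequence
\[ 0 \lra \mc{F}_1 \lra \mc{F}_0 \lra \mc{G} \lra 0,\]
with $\mc{F}_0=F^q(\bb{S}_{(t,t)}V)\oo D^a\mc{R}(e_1)$ and $\mc{F}_1=F^q(\bb{S}_{(t,t,1)}V)\oo D^a\mc{R}(e_2)$, for $e_1:=e-1-qt$ and $e_2:=e_1-q$. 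The hypotheses $tq\leq e\leq(t+1)q-2$ force $e_1\in[-1,q-3]$ and $e_2\in[-q-1,-3]$.

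The next step would be to compute $H^\bullet(\mc{F}_0)$ and $H^\bullet(\mc{F}_1)$ individually. For $\mc{F}_0$ the bound $e_1\geq -1=-(n-2)$ puts us in the range of Lemma~\ref{lem:coh2-higher-DdR} (with $q=1$), so $H^j(\mc{F}_0)=0$ for $j\geq 2$, and in degrees $0,1$ the constant factor pulls out to yield characters $F^q(h_t^\vee)\cdot h^j(d-tq,e-tq)$ straight from \eqref{eq:def-h0-h1-de}, using $[\bb{S}_{(t,t)}V]=s_{(t,t)}=h_t^\vee$. For $\mc{F}_1$ the twist $e_2\leq -3\leq -n$ combined with \eqref{eq:ses-DdR} and the vanishing of $H^1$ of any line bundle on $\PP^2$ would force $H^0(\PP,D^a\mc{R}(e_2))=H^1(\PP,D^a\mc{R}(e_2))=0$, and the surjective tail
\[ 0 \lra H^2(\PP,D^a\mc{R}(e_2)) \lra H^2(\PP,D^aV(e_2)) \lra H^2(\PP,D^{a-1}V(e_2+1)) \lra 0\]
together with Serre duality on $\PP^2$ would yield the character $h_a\cdot h_{f_1}^\vee - h_{a-1}\cdot h_{f_2}^\vee$, where $f_1:=-e_2-3=q(t+1)-e-2$ and $f_2:=f_1-1$.

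Plugging into the long exact sequence of $0\to\mc{F}_1\to\mc{F}_0\to\mc{G}\to 0$, the above vanishings would collapse it to an isomorphism $H^0(\mc{G})\simeq H^0(\mc{F}_0)$ and a short exact sequence $0\to H^1(\mc{F}_0)\to H^1(\mc{G})\to H^2(\mc{F}_1)\to 0$. Using $[\bb{S}_{(t,t,1)}V]=s_{(t,t,1)}=s_{(t-1,t-1)}=h_{t-1}^\vee$ (the middle equality being the $x_1x_2x_3=1$ relation in $A$), this immediately yields the claimed formula for $[H^0(\mc{G})]$ and reduces $[H^1(\mc{G})]$ to
\[ F^q(h_t^\vee)\cdot h^1(d-tq,e-tq) + F^q(h_{t-1}^\vee)\cdot\bigl(h_a\cdot h_{f_1}^\vee - h_{a-1}\cdot h_{f_2}^\vee\bigr).\]

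The only nontrivial remaining step, and the main obstacle, would be the purely symmetric-function identity
\[ h_a\cdot h_{f_1}^\vee - h_{a-1}\cdot h_{f_2}^\vee = s_{(q+d-e-2,\, d-tq)}^\vee,\]
which I plan to prove by applying Pieri's rule to the products $h_k\cdot s_{(l,l)}$ (exploiting $h_l^\vee=s_{(l,l)}$ in three variables): each such product expands as $\sum_{j\geq 0} s_{(l+k-j,\,l,\,j)}$, and after invoking the three-variable collapse $s_{(\mu_1,\mu_2,\mu_3)}=s_{(\mu_1-\mu_3,\mu_2-\mu_3)}$, the terms with $j\geq 1$ in the first Pieri expansion would match those of the second expansion under the reindexing $j\mapsto j-1$, cancelling pairwise and leaving only $s_{(f_1+a,f_1)}$. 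Applying the substitutions $f_1+a=q+d-e-2$ and $(f_1+a)-f_1=a=d-tq$ together with \eqref{eq:basic-schur} would then turn this into $s_{(q+d-e-2,\,d-tq)}^\vee$. The identity itself is elementary once set up, but requires careful bookkeeping and is the one place where the three-variable structure is genuinely used.
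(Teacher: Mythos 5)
Your proposal is correct, and its skeleton is the same as the paper's: resolve $F^q(D^t\mc{R})$ by the two-term sequence obtained from (the Frobenius twist of) the length-one resolution of $S^t\mc{R}$, tensor with $D^{d-tq}\mc{R}(e-1)$, and run the long exact sequence, with the vanishing of $H^0,H^1$ of the subsheaf term and of $H^2$ of the quotient-side term collapsing everything to an isomorphism on $H^0$ and a short exact sequence on $H^1$. The one place you diverge is the computation of $H^\bullet(\PP,D^{d-tq}\mc{R}(e-1-(t+1)q))$: the paper transfers this group to the incidence correspondence and then to the full flag variety via \eqref{eq:cohom-P-vs-incvar}, and uses Serre duality plus Kempf vanishing to see that it is concentrated in degree $2$ and equal to the dual Weyl module $(\bb{S}_{((t+1)q-e-2,0,tq-d)}V)^{\vee}$, whence the character $s_{(q+d-e-2,d-tq)}^{\vee}$ is immediate; you instead stay on $\PP^2$, use the Euler-type sequence \eqref{eq:ses-DdR} (valid for all divided powers, so no issue that $d-tq$ may exceed $p$) together with Serre duality for line bundles, and then identify $h_a\cdot h_{f_1}^{\vee}-h_{a-1}\cdot h_{f_2}^{\vee}$ with a single (dual) Schur function by a Pieri-rule cancellation. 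That identity checks out and is in fact the $n=3$ instance of the remark at the end of the paper's Lemma~\ref{lem:coh-twists-Dtr} (stated there only for $S^t\mc{R}$ with $t<p$, so your re-derivation for $D^a\mc{R}$ is the right move rather than a citation). Your route is more elementary — no Kempf vanishing on $\bb{F}(V)$ is needed for this step — at the cost of the symmetric-function bookkeeping, and it produces only the character rather than the explicit module structure of the $H^2$ term; since \eqref{eq:H1-right-term} and its later use in Lemma~\ref{lem:connecting=0} are purely character-level, this loss is harmless.
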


\begin{proof} We argue as in the proof of Lemma~\ref{lem:Hi-left-term} to get an exact sequence
\[ 0 \lra W_1 \oo D^{d-tq}\mc{R}(e-1-(t+1)q) \lra W_0 \oo D^{d-tq}\mc{R}(e-1-tq) \lra \mc{F} \lra 0,\]
where $W_1=F^q(S^{t-1}V^{\vee})$, $W_0=F^q(S^{t}V^{\vee})$, and $\mc{F}=F^q(D^t\mc{R})\oo D^{d-tq}\mc{R}(e-1)$. We have
\begin{align*}
    H^j(D^{d-tq}\mc{R}(e-1-(t+1)q))&\overset{\eqref{eq:cohom-P-vs-incvar}}{\simeq} H^{j+1}(X,\mc{O}_{X}(e-(t+1)q,tq-d-2))\\&\simeq H^{j+1}(\bb{F}(V),\mc{O}_{\bb{F}(V)}(e-(t+1)q,0,d-tq+2))\\
    &\overset{\text{Serre duality}}{\simeq} H^{2-j}(\bb{F}(V),\mc{O}_{\bb{F}(V)}((t+1)q-e-2,0,tq-d))^{\vee}.
\end{align*}
Since $(t+1)q-e-2\geq 0\geq tq-d$, the weight $\ll=((t+1)q-e-2,0,tq-d)$ is dominant, and we have using Kempf vanishing \cite{BCRV}*{Theorems~9.4.1,~9.8.5}
\[H^j(\bb{F}(V),\mc{O}_{\bb{F}(V)}(\ll))=\begin{cases}
\bb{S}_{\ll}V & \text{for }j=0,\\
0 &\text{for }j\neq 0.
\end{cases}\]
This shows that
\[ H^2(\PP,D^{d-tq}\mc{R}(e-1-(t+1)q)) = (\bb{S}_{\ll}V)^{\vee}\]
is the only non-vanishing group for the sheaf $D^{d-tq}\mc{R}(e-1-(t+1)q)$, and we know by Lemma~\ref{lem:coh2-higher-DdR} that $H^2(\PP,D^{d-tq}\mc{R}(e-1-tq))=0$. The long exact sequence in cohomology then yields an isomorphism
\[ W_0 \oo H^0(\PP,D^{d-tq}\mc{R}(e-1-tq)) \simeq H^0(\PP,\mc{F})\]
and a short exact sequence
\[ 0 \lra W_0 \oo H^1(\PP,D^{d-tq}\mc{R}(e-1-tq)) \lra H^1(\PP,\mc{F}) \lra W_1 \oo H^2(\PP,D^{d-tq}\mc{R}(e-1-(t+1)q)) \lra 0,\]
from which \eqref{eq:H1-right-term} follows using $s_{\ll}=s_{\ll+(d-tq,d-tq,d-tq)}$.
\end{proof}

\begin{lemma}\label{lem:connecting=0}
    The connecting homomorphism~\eqref{eq:connecting=0} is zero.
\end{lemma}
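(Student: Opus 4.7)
The plan is to exploit the $\SL_3$-equivariance of $\delta$ and apply Schur's lemma: $\delta$ vanishes as soon as the source and target share no simple composition factor. For $t=1$ (in particular whenever $p=2$, which forces $t=1$ since $t<p$) the target is zero because $F^q(S^{t-2}V^{\vee})=0$, so $\delta=0$ trivially; we may therefore assume $t\geq 2$ and $p\geq 3$. From the proofs of Lemmas~\ref{lem:Hi-left-term} and~\ref{lem:H1-right-term},
\[
\text{source}=F^q\bigl(S^tV^{\vee}\bigr)\oo M, \qquad \text{target}=F^q\bigl(S^{t-2}V^{\vee}\bigr)\oo N^{\vee},
\]
where $M=H^0(\PP,D^{d-tq}\mc{R}(e-tq-1))$ has character $h^0(d-tq,e-tq)$ and $N=H^0(\PP,D^{(t+1)q-d-2}\mc{R}((t+1)q-e-3))$ has character $h^0((t+1)q-d-2,(t+1)q-e-2)$.

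The heart of the argument is to show that every simple composition factor $L(\mu)$ of $M$, and every $L(\nu)$ of $N^{\vee}$, has both $\omega_1$- and $\omega_2$-coefficients strictly less than $q$. Using the symmetry $h^0(D,E)=h^1(E,D)$ and invoking Theorem~\ref{thm:sl3-highest-weight-h1} by induction on $d$ (note $d-tq<d$ and $(t+1)q-d-2<d$), compute $\op{hw}(h^0(D,E))$ in each of the cases $E>D$, $E=D$, $E<D$; a short case-check using Theorem~\ref{thm:sl3-highest-weight-h1}(1)--(2) together with the inequalities $E'\leq D'\leq q'-2$ and $D<q$ shows that the sum of its two $\omega$-coefficients is strictly less than~$q$ in every case (e.g., for $E>D$ the sum equals $E-1$; for $E\leq D$ it is at most $D-D'-2$). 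Since subtracting either simple root $\alpha_1,\alpha_2$ decreases the coefficient sum of a dominant weight by exactly~$1$, every dominant weight $\mu\leq\op{hw}(M)$ inherits the same bound, and hence each individual coefficient of $\mu$ is $<q$. The identical analysis for $N^{\vee}$ uses $L(a\omega_1+b\omega_2)^{\vee}=L(b\omega_1+a\omega_2)$, which preserves coefficient sums.

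With this bound in hand, Steinberg's tensor product theorem applies cleanly: for $\mu=a\omega_1+b\omega_2$ with $a,b<q=p^k$, the $p$-adic expansion of $\mu$ is supported in positions $0,\dots,k-1$, and both $t\omega_2$ and $(t-2)\omega_2$ are $p$-restricted (since $t<p$), giving
\[
F^qL(t\omega_2)\oo L(\mu)=L\bigl(\mu+tq\omega_2\bigr), \qquad F^qL\bigl((t-2)\omega_2\bigr)\oo L(\nu)=L\bigl(\nu+(t-2)q\omega_2\bigr).
\]
Running through a Jordan--H\"older filtration, the composition factors of source and target are labelled by $\mu_i+tq\omega_2$ and $\nu_j+(t-2)q\omega_2$, respectively. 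A common factor would force $\nu_j-\mu_i=2q\omega_2$, making the $\omega_2$-coefficient of $\nu_j$ at least $2q$ and contradicting the bound $<q$. Hence source and target share no simple composition factor and $\delta=0$.

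The main obstacle is the uniform coefficient-sum bound on $\op{hw}(h^0(D,E))$ across the three subcases, which requires careful bookkeeping with the parameters $(m,q',D',E')$ from Theorem~\ref{thm:sl3-highest-weight-h1}(2) and exploits $E'\leq D'$ together with $D<q$; once this inequality is secured, the Steinberg decomposition and weight-matching step are standard.
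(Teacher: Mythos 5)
Your proposal is correct and follows essentially the same route as the paper: identify the source and target as $F^q(L(t\omega_2))\oo M$ and $F^q(L((t-2)\omega_2))\oo N^{\vee}$, use Theorem~\ref{thm:sl3-highest-weight-h1} (available inductively for smaller parameters) to see that all composition factors of $M$ and $N^{\vee}$ are $q$-restricted, apply the Steinberg tensor product theorem, and conclude that the $\omega_2$-coefficients $tq+b$ and $(t-2)q+b'$ with $0\leq b,b'<q$ can never agree, so there is no common composition factor and the connecting map vanishes; your separate treatment of $t=1$ is a harmless addition since the target is then zero. The one small wrinkle is your justification that every dominant weight inherits the coefficient-sum bound ``by subtracting simple roots'' from $\op{hw}(M)$ -- not every weight lies below the lexicographically highest one in the dominance order -- but the bound follows immediately from the lexicographic definition of $\op{hw}$ (the first coordinate of a dominant weight $(a,b,0)$ is exactly its $\omega$-coefficient sum), which is also how the paper's implicit step is repaired.
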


\begin{proof}
 Since $e-tq\leq q-2$, it follows from Theorem~\ref{thm:sl3-highest-weight-h1} that every dominant weight $\ll=a\omega_1+b\omega_2$ occuring in $h^0(d-tq,e-tq)=h^1(e-tq,d-tq)$ is $q$-restricted, that is, it satisfies $a,b<q$. If we write $L(\mu)$ for the simple $\SL_3$-module of highest weight $\mu$, then we have $F^q(h_t^{\vee}) = L(tq\omega_2)$, and therefore by the Steinberg Tensor Product Theorem \cite{jantzen}*{Proposition~II.3.16} and \eqref{eq:H1-right-term}, we have that every simple module appearing as a composition factor of $H^0(\PP,F^q(D^t\mc{R}) \oo D^{d-tq}\mc{R}(e-1))$ is of the form $L(a\omega_1 + (tq+b)\omega_2)$ with $0\leq a,b<q$.

 Similarly, since $q(t+1)-e-2\leq q-2$, we have by Lemma~\ref{lem:wts<q00} that every dominant weight $\ll$ occuring in $h^0(q(t+1)-d-2,q(t+1)-e-2)^{\vee} = h^1(q(t+1)-e-2,q(t+1)-d-2)^{\vee}$ is $q$-restricted. Arguing as before, it follows from \eqref{eq:Hi-left-term} that every simple composition factor of $H^1(\PP,F^q(D^{t-1}\mc{R}) \oo T_qD^{d-(t-1)q}\mc{R}(e-1))$ is of the form $L(a'\omega_1 + ((t-2)q+b')\omega_2$ for $0\leq a',b'<q$. Since the equality $tq+b=(t-2)q+b'$ cannot hold when $0\leq b,b'<q$, the homomorphism \eqref{eq:connecting=0} must be zero, as desired.
\end{proof}

To conclude the proof of Theorem~\ref{thm:recursion-sl3}, we note that the long exact sequence in cohomology together with \eqref{eq:ses-DdR-n=3}, \eqref{eq:connecting=0}, and the vanishing in \eqref{eq:Hi-left-term}, yields a short exact sequence
\[
\begin{aligned}
0 \lra H^1(\PP,F^q(D^{t-1}\mc{R}) \oo T_qD^{d-(t-1)q}\mc{R}(e-1)) &\lra H^1(\PP,D^d\mc{R}(e-1)) \\
&\lra H^1(\PP,F^q(D^t\mc{R}) \oo D^{d-tq}\mc{R}(e-1)) \lra 0.
\end{aligned}
\]
Taking characters and using \eqref{eq:Hi-left-term} and \eqref{eq:H1-right-term} gives the desired recursion and concludes our proof.
\end{proof}

We conclude our paper with a proof of Theorem~\ref{thm:sl3-p=2-characters}, noting that the recursion \eqref{eq:main-rec-sl3} takes a simpler form when $p=2$: the condition $1\leq t<p$ forces $t=1$, and we get
\begin{equation}\label{eq:main-rec-sl3-p=2}
h^1(d,e)=F^q(s_{(1,1)})\cdot h^1(d-q,e-q) + s^{(q)}_{(e-1+q,d-q)}
\end{equation}
for $q=2^k$, $q\leq d\leq e\leq 2q-2$. We will need the following recursive description of Nim polynomials.

\begin{lemma}\label{lem:rec-Nim}
    If $q=2^k$ and $0\leq m<q$ then $\mc{N}_{q+m} = F^q(s_{(1,1)})\cdot\mc{N}_m$. In particular, $\mc{N}_m = [L(m\omega_2)]$.
\end{lemma}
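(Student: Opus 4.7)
My plan is to prove the multiplicative recursion $\mc{N}_{q+m} = F^q(s_{(1,1)})\cdot\mc{N}_m$ via a combinatorial bijection on monomials, and then deduce the character identification $\mc{N}_m = [L(m\omega_2)]$ by induction using Steinberg's tensor product theorem.

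For the recursion, I would split each triple $(a,b,c)$ contributing to $\mc{N}_{q+m}$ along bit $k$: write $a = a_H\cdot q + a_L$ with $0\leq a_L<q$, and similarly for $b,c$. Since XOR is bitwise, the Nim condition decouples as $a_H\oplus b_H\oplus c_H = 0$ and $a_L\oplus b_L\oplus c_L = 0$. The key technical step is to show $a_H, b_H, c_H\in\{0,1\}$: if some $a_H\geq 2$, then $a$ has a bit set at position $\geq k+1$, and the XOR condition forces this bit to also be set in at least one of $b,c$, yielding $a+b+c\geq 2\cdot 2^{k+1} = 4q$ and contradicting $a+b+c = 2(q+m) < 4q$. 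With $a_H, b_H, c_H\in\{0,1\}$, the upper XOR condition allows only the all-zero pattern or a permutation of $(1,1,0)$; the all-zero case is excluded because $a_L\oplus b_L\oplus c_L = 0$ forces $a_L+b_L+c_L\leq 2(q-1)$, while the all-zero high part would require that sum to be $2(q+m)\geq 2q$.

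Hence every triple contributing to $\mc{N}_{q+m}$ decomposes uniquely as a permutation of $(q,q,0)$ plus a Nim triple summing to $2m$ with entries $<q$. The three possible high parts are exactly the three monomials of $F^q(s_{(1,1)}) = x_1^q x_2^q + x_1^q x_3^q + x_2^q x_3^q$, while the low parts enumerate $\mc{N}_m$ (note that all monomials of $\mc{N}_m$ automatically have exponents $<q$, since any entry $\geq q$ in a Nim triple with sum $<2q$ would force a second entry $\geq q$ by the XOR constraint). Because the high and low exponent ranges occupy disjoint binary positions, exponent addition matches polynomial multiplication with no collisions, giving $\mc{N}_{q+m} = F^q(s_{(1,1)})\cdot\mc{N}_m$.

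For the character statement I would induct on $m$, with the trivial base case $\mc{N}_0 = 1 = [L(0)]$. For the step, write $m = 2^k + m'$ where $2^k$ is the leading binary digit and $m'<2^k$, so the recursion gives $\mc{N}_m = F^{2^k}(s_{(1,1)})\cdot\mc{N}_{m'}$. By Steinberg's tensor product theorem \cite{jantzen}*{Proposition~II.3.16} applied to the $2$-adic expansion of $m\omega_2$, and using that $L(\omega_2) = \bb{S}_{(1,1)}V = \bw^2 V$ has character $s_{(1,1)}$, we get $[L(m\omega_2)] = F^{2^k}(s_{(1,1)})\cdot[L(m'\omega_2)]$, which by induction equals $\mc{N}_m$. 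The main obstacle is the bit-level case analysis of the first step; everything else reduces to bookkeeping and a direct invocation of Steinberg.
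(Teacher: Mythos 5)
Your proof is correct and follows essentially the same route as the paper: a bit-level combinatorial bijection showing that each Nim triple for $2(q+m)$ arises from a Nim triple for $2m$ by adding $q$ to exactly two of the three entries (matching the three monomials of $F^q(s_{(1,1)})$), followed by induction on $m$ using Steinberg's tensor product theorem together with $[L(\omega_2)]=s_{(1,1)}$. The only cosmetic difference is that the paper obtains the correspondence by noting that the bit positions carrying two ones are exactly the binary digits of $m$ (so position $k$ acts independently), whereas you split at bit $k$ and eliminate the other high-bit patterns by size estimates.
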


\begin{proof} Suppose that $a+b+c=2m$, $a\oplus b\oplus c=0$, and consider the base $2$ expansions $a=(a_s\cdots a_0)_2$, $b=(b_s\cdots b_0)_2$, $c=(c_s\cdots c_0)_2$. For each $i=0,\cdots,s$ we have either $a_i=b_i=c_i=0$, or $a_i+b_i+c_i=2$, in which case exactly two of $a_i,b_i,c_i$ are equal to $1$. It follows that if
\[ m = 2^{i_1} + \cdots + 2^{i_r}\text{ for }i_1<\cdots <i_r\]
then $a_i+b_i+c_i=2$ if and only if $i\in\{i_1,\cdots,i_r\}$. A similar argument holds after replacing $m$ by $m+q = m+2^k$, where by the hypothesis we have $k>i_r$. It follows that each solution of the system $a+b+c=2m$, $a\oplus b\oplus c=0$ corresponds to precisely three solutions of the system $A+B+C=2(m+q)$, $A\oplus B\oplus C=0$, namely
\[ (A,B,C) = (a+q,b+q,c),\quad (A,B,C)=(a+q,b,c+q),\quad (A,B,C)=(a,b+q,c+q).\]
Since $F^q(s_{(1,1)})=x_1^qx_2^q+x_1^qx_3^q+x_2^qx_3^q$, we get the desired equality $\mc{N}_{q+m} = F^q(s_{(1,1)})\cdot\mc{N}_m$. Moreover, since $s_{(1,1)} = [L(\omega_2)]$ and $m\omega_2$ is $q$-restricted when $m<q$, the last conclusion follows by induction from the Steinberg Tensor Product Theorem.
\end{proof}

\begin{proof}[Proof of Theorem~\ref{thm:sl3-p=2-characters}] We let $q=2^k$, $d'=d-q$, and $e'=e-q$. If $d'=0$, then $I=\{k\}$, $l_k(d)=0=r_k(d)$, $r_k(e)=e'$, hence Theorem~\ref{thm:sl3-p=2-characters} asserts that
\[ h^1(d,e)=s^{(q)}_{(e'-1+2q,0)}=s^{(q)}_{(e-1+q,d-q)},\]
which follows from \eqref{eq:main-rec-sl3-p=2} and the fact that $h^1(0,e')=0$. We may then assume that $1\leq d'\leq e'$, and we write
\[ d'=(d_r\cdots d_0)_2\quad\text{and}\quad e'=(e_r\cdots e_0)_2\quad\text{ for some }r<k\text{ with }e_r=1.\]
If $d_r=0$ then we get $I=\{k\}$ and $h^1(d,e)=s^{(q)}_{(e-1+q,d-q)}$ as before. If $d_r=1$ then we get $2^r\leq d'\leq e'\leq 2^{r+1}-1$ and we can apply induction to get
 \[h^1(d',e') = \sum_{i\in I'} F^{2^{i+1}}\left(\mc{N}_{l_i(d')}\right)\cdot s^{(2^i)}_{(r_i(e')-1+2^{i+1},r_i(d'))},\]
 where $I'=\{i:1\leq i\leq r,\ d_i=e_i=1,\ l_i(d')=l_i(e'),\ r_i(e')\leq 2^i-2\}$. The formula for $h^1(d,e)$ now follows from the equalities
 \[I=I'\cup\{k\},\quad l_k(d)=0,\quad s^{(2^k)}_{(r_k(e)-1+2^{k+1},r_k(d))}=s^{(q)}_{(e-1+q,d-q)},\quad r_i(d')=r_i(d)\text{ and }r_i(e')=r_i(e)\text{ for }i\in I',
 \] 
 from the identity
 \[ F^q(s_{(1,1)})\cdot F^{2^j}(\mc{N}_m) = F^{2^j}\left(F^{q/2^j}(s_{(1,1)})\cdot\mc{N}_m\right)\]
 and from Lemma~\ref{lem:rec-Nim} combined with the fact that $l_i(d)=l_i(d')+q/2^{i+1}$ for $i\in I'$. Indeed, we have by \eqref{eq:main-rec-sl3-p=2}
 \[
 \begin{aligned}
 h^1(d,e) &= s^{(q)}_{(e-1+q,d-q)} + F^q(s_{(1,1)})\cdot h^1(d',e') \\
 &= s^{(2^k)}_{(r_k(e)-1+2^{k+1},r_k(d))} + \sum_{i\in I'} F^q(s_{(1,1)}) \cdot F^{2^{i+1}}\left(\mc{N}_{l_i(d')}\right)\cdot s^{(2^i)}_{(r_i(e')-1+2^{i+1},r_i(d'))} \\
 &= s^{(2^k)}_{(r_k(e)-1+2^{k+1},r_k(d))} + \sum_{i\in I'}  F^{2^{i+1}}\left(\mc{N}_{l_i(d')+q/2^{i+1}}\right)\cdot s^{(2^i)}_{(r_i(e')-1+2^{i+1},r_i(d'))} \\
&=\sum_{i\in I} F^{2^{i+1}}\left(\mc{N}_{l_i(d)}\right)\cdot s^{(2^i)}_{(r_i(e)-1+2^{i+1},r_i(d))} 
 \end{aligned}\]
 as desired.
\end{proof}

\section*{Acknowledgements}
Experiments with the computer algebra software Macaulay2 \cite{GS} have provided numerous valuable insights. We thank Keller VandeBogert for helpful comments on the manuscript. Raicu acknowledges the support of the National Science Foundation Grant DMS-1901886.

\begin{bibdiv}
    \begin{biblist}

\bib{andersen}{article}{
   author={Andersen, Henning Haahr},
   title={The first cohomology group of a line bundle on $G/B$},
   journal={Invent. Math.},
   volume={51},
   date={1979},
   number={3},
   pages={287--296},
}

\bib{anderson-frob}{article}{
   author={Andersen, Henning Haahr},
   title={The Frobenius morphism on the cohomology of homogeneous vector
   bundles on $G/B$},
   journal={Ann. of Math. (2)},
   volume={112},
   date={1980},
   number={1},
   pages={113--121},
}

\bib{and-survey}{article}{
   author={Andersen, Henning Haahr},
   title={Representation Theory via Cohomology of Line Bundles},
   journal={Transform. Groups},
   date={2022},
}

\bib{AFPRW}{article}{
   author={Aprodu, Marian},
   author={Farkas, Gavril},
   author={Papadima, \c{S}tefan},
   author={Raicu, Claudiu},
   author={Weyman, Jerzy},
   title={Koszul modules and Green's conjecture},
   journal={Invent. Math.},
   volume={218},
   date={2019},
   number={3},
   pages={657--720},
}

\bib{BCRV}{book}{
   author={Bruns, Winfried},
   author={Conca, Aldo},
   author={Raicu, Claudiu},
   author={Varbaro, Matteo},
   title={Determinants, Gr\"obner bases and cohomology},
   series={Springer Monographs in Mathematics},
   publisher={Springer, Cham},
   date={2022},
   pages={xxii+494},
}

\bib{donkin}{article}{
   author={Donkin, Stephen},
   title={The cohomology of line bundles on the three-dimensional flag
   variety},
   journal={J. Algebra},
   volume={307},
   date={2007},
   number={2},
   pages={570--613},
}

\bib{doty-gap}{article}{
    author = {Doty, Stephen},
    title = {\href{https://doty.math.luc.edu}{Weyl modules: GAP Package}},
}

\bib{doty-walker}{article}{
   author={Doty, Stephen},
   author={Walker, Grant},
   title={Truncated symmetric powers and modular representations of ${\rm
   GL}_n$},
   journal={Math. Proc. Cambridge Philos. Soc.},
   volume={119},
   date={1996},
   number={2},
   pages={231--242},
}

\bib{eis-geom-syz}{book}{
   author={Eisenbud, David},
   title={The geometry of syzygies},
   series={Graduate Texts in Mathematics},
   volume={229},
   note={A second course in commutative algebra and algebraic geometry},
   publisher={Springer-Verlag, New York},
   date={2005},
   pages={xvi+243},
}

\bib{fazeel}{article}{
   author={Fazeel Anwar, M.},
   title={On the cohomology of certain rank 2 vector bundles on $G/B$},
   journal={Algebr. Represent. Theory},
   volume={22},
   date={2019},
   number={5},
   pages={1101--1108},
}

\bib{gao}{article}{
    author = {Gao, Zhao},
    title = {\href{https://curate.nd.edu/show/fb494745m0v}{Cohomology of Line Bundles on the Incidence Correspondence}},
    date = {2022}
}

\bib{GS}{article}{
   author = {Grayson, Daniel R.},
   author = {Stillman, Michael E.},
   title = {\href{http://www.math.uiuc.edu/Macaulay2/}{Macaulay 2, a software system for research in algebraic geometry}}, 
}

\bib{griffith}{article}{
   author={Griffith, Walter Lawrence, Jr.},
   title={Cohomology of flag varieties in characteristic $p$},
   journal={Illinois J. Math.},
   volume={24},
   date={1980},
   number={3},
   pages={452--461},
}

\bib{hartshorne}{book}{
   author={Hartshorne, Robin},
   title={Algebraic geometry},
   series={Graduate Texts in Mathematics, No. 52},
   publisher={Springer-Verlag, New York-Heidelberg},
   date={1977},
   pages={xvi+496},
}

\bib{humphreys}{book}{
   author={Humphreys, James E.},
   title={Modular representations of finite groups of Lie type},
   series={London Mathematical Society Lecture Note Series},
   volume={326},
   publisher={Cambridge University Press, Cambridge},
   date={2006},
   pages={xvi+233},
}

\bib{james-conj-carter}{article}{
   author={James, Gordon D.},
   title={On a conjecture of Carter concerning irreducible Specht modules},
   journal={Math. Proc. Cambridge Philos. Soc.},
   volume={83},
   date={1978},
   number={1},
   pages={11--17},
}

\bib{james-decomp}{article}{
   author={James, Gordon D.},
   title={The decomposition of tensors over fields of prime characteristic},
   journal={Math. Z.},
   volume={172},
   date={1980},
   number={2},
   pages={161--178},
}

\bib{james-kerber}{book}{
   author={James, Gordon D.},
   author={Kerber, Adalbert},
   title={The representation theory of the symmetric group},
   series={Encyclopedia of Mathematics and its Applications},
   volume={16},
   note={With a foreword by P. M. Cohn;
   With an introduction by Gilbert de B. Robinson},
   publisher={Addison-Wesley Publishing Co., Reading, Mass.},
   date={1981},
}

\bib{jantzen}{book}{
   author={Jantzen, Jens Carsten},
   title={Representations of algebraic groups},
   series={Mathematical Surveys and Monographs},
   volume={107},
   edition={2},
   publisher={American Mathematical Society, Providence, RI},
   date={2003},
   pages={xiv+576},
}

\bib{kempf}{article}{
   author={Kempf, George R.},
   title={Linear systems on homogeneous spaces},
   journal={Ann. of Math. (2)},
   volume={103},
   date={1976},
   number={3},
   pages={557--591},
}

\bib{lazarsfeld-pos-I}{book}{
   author={Lazarsfeld, Robert},
   title={Positivity in algebraic geometry. I},
   series={Ergebnisse der Mathematik und ihrer Grenzgebiete. 3. Folge. A
   Series of Modern Surveys in Mathematics [Results in Mathematics and
   Related Areas. 3rd Series. A Series of Modern Surveys in Mathematics]},
   volume={48},
   note={Classical setting: line bundles and linear series},
   publisher={Springer-Verlag, Berlin},
   date={2004},
   pages={xviii+387},
}

\bib{liu-thesis}{article}{
   author={Liu, Linyuan},
   title={Cohomologie des fibr\'es en droites sur $\SL_3 / B$ en caract\'eristique positive: deux filtrations et cons\'equences},
   journal = {arXiv},
   number = {1903.08758},
   note = {To appear in Journal of the European Mathematical Society},
   date={2019}
}

\bib{liu-coker}{article}{
   author={Liu, Linyuan},
   title={On the cohomology of line bundles over certain flag schemes},
   journal={J. Combin. Theory Ser. A},
   volume={182},
   date={2021},
   pages={Paper No. 105448, 25},
   issn={0097-3165},
}

\bib{liu-polo}{article}{
   author={Liu, Linyuan},
   author={Polo, Patrick},
   title={On the cohomology of line bundles over certain flag schemes II},
   journal={J. Combin. Theory Ser. A},
   volume={178},
   date={2021},
   pages={Paper No. 105352, 11},
}

\bib{luebeck}{article}{
   author={L\"{u}beck, Frank},
   title = {\href{http://www.math.rwth-aachen.de/\~Frank.Luebeck/chev/WMSmall/A3-1000.html}{Weight multiplicities for modular representations of $\SL_4$}}, 
}

\bib{sun}{article}{
   author={Sun, Xiaotao},
   title={Direct images of bundles under Frobenius morphism},
   journal={Invent. Math.},
   volume={173},
   date={2008},
   number={2},
   pages={427--447},
}

\bib{walker}{article}{
   author={Walker, Grant},
   title={Modular Schur functions},
   journal={Trans. Amer. Math. Soc.},
   volume={346},
   date={1994},
   number={2},
   pages={569--604},
}

\bib{weyman}{book}{
   author={Weyman, Jerzy},
   title={Cohomology of vector bundles and syzygies},
   series={Cambridge Tracts in Mathematics},
   volume={149},
   publisher={Cambridge University Press, Cambridge},
   date={2003},
   pages={xiv+371},
}

    \end{biblist}
\end{bibdiv}

\end{document}